\newtheorem{theorem}{Theorem}
\newtheorem{remark}{Remark}
\newtheorem{lemma}{Lemma}
\newcommand{\0}{\mathaccent23}
\newcommand\tr{\operatorname{tr}}
\newcommand\grad{\operatorname{grad}}
\renewcommand\div{\operatorname{div}}
\newcommand\curl{\operatorname{curl}}
\newcommand\x{\times}
\newcommand{\bs}{{\scriptscriptstyle \bullet}}
\begin{document}

\title{Nonstandard finite element de Rham complexes \\ on cubical meshes}

\author{Andrew Gillette \thanks{Department of Mathematics, University of Arizona, 617 N. Santa Rita Ave.,
Tucson, Arizona, USA. email: {\tt agillette@math.arizona.edu}} \and Kaibo Hu \thanks{Department of Mathematics, University of Oslo, PO Box 1053 Blindern, NO 0316 Oslo, Norway.  email: {\tt kaibohu@math.uio.no}} \and Shuo Zhang \thanks{LSEC, Institute of Computational Mathematics and Scientific/Engineering Computing, Academy of Mathematics and System Sciences, and National Centre for Mathematics and Interdisciplinary Sciences, Chinese Academy of Sciences, Beijing 100190, People's Republic of China {\tt email: szhang@lsec.cc.ac.cn}}}

\date{}

\maketitle

\begin{abstract}
We propose two general operations on finite element differential complexes on cubical meshes that can be used to construct and analyze sequences of ``nonstandard'' convergent methods.
The first operation, called DoF-transfer, moves edge degrees of freedom to vertices in a way that reduces global degrees of freedom while increasing continuity order at vertices.
The second operation, called serendipity, eliminates interior bubble functions and degrees of freedom locally on each element without affecting edge degrees of freedom.
These operations can be used independently or in tandem to create nonstandard complexes that incorporate Hermite, Adini and ``trimmed-Adini'' elements.
We show that the resulting elements provide convergent, non-conforming methods for problems requiring stronger regularity and satisfy a discrete Korn inequality.
We discuss potential benefits of applying these elements to Stokes, biharmonic and elasticity problems.
\end{abstract}

\section{Introduction}

To obtain compatible and stable numerical  schemes in the framework of discrete differential forms and  finite element exterior calculus \cite{Arnold.D;Falk.R;Winther.R.2006a, Arnold.D;Falk.R;Winther.R.2010a,Hiptmair.R.2002a},  physical variables are discretized in discrete differential complexes.  There have been many successful finite element de Rham complexes, consisting of the Lagrange, 1st and 2nd N\'{e}d\'{e}lec, Raviart-Thomas, Brezzi-Douglas-Marini (BDM) and discontinuous elements. On simplicial meshes, the construction of these elements is classical \cite{Nedelec.J.1986a, Nedelec.J.1980a, Raviart.P;Thomas.J.1977a, brezzi1985two}. On cubical meshes, the analogue of the Raviart-Thomas family has a tensor product nature \cite{Boffi.D;Brezzi.F;Fortin.M.2013a}, while the analogue of the BDM families, denoted as $\mathcal{S}_{r}\Lambda^{k}$, are developed in \cite{arnold2014finite} as a resolution of the serendipity element (c.f. \cite{arnold2011serendipity}). 
These results are summarized in the Periodic Table of the Finite Elements~\cite{arnold2014periodic}. The recently defined trimmed serendipity family, denoted $\mathcal{S}_r^-\Lambda^k$, provides a distinct resolution of the serendipity element as subcomplex of the de Rham sequence with some computational advantages~\cite{GK2016}.

While the de Rham complex describes regularity spaces appropriate for Hodge-Laplacian problems, it is not suited for flow problems that involve the same differential operators but require stronger regularities. 
A prominent example is the Stokes problem and the corresponding ``Stokes complex'', given by
$$
\begin{diagram}
0 & \rTo & \mathbb{R}  & \rTo & H^{2}(\Omega)& \rTo^{\curl} & \left [H^{1}(\Omega)\right ]^{2} & \rTo^{\div} & L^{2} & \rTo &0,
\end{diagram}
$$
in two space dimensions. The importance of precisely preserving mass conservation (also called the divergence-free condition) has been shown in \cite{john2017divergence}. 
Accordingly, finite dimensional subcomplexes of the Stokes complexes can be used to construct suitable elements.
Falk and Neilan \cite{falk2013stokes} construct a sequence starting with the $C^{1}$ Argyris element on 2D triangular meshes. Neilan and Sap \cite{neilan2016stokes}  give an analogous construction on cubical meshes, which is a resolution of the Bogner-Fox-Schimt (BFS) $C^{1}$ element. Tensor product constructions based on splines in the context of the isogeometric  analysis are proposed in \cite{buffa2011isogeometric}.  We also refer to \cite{neilan2015discrete,neilan2016stokes} and the reference therein for relevant work in 3D and refer to \cite{zhang2005new,arnold1992quadratic,christiansen2016generalized} for constructions on macroelements.
To obtain these finite element complexes with higher regularity, one usually needs a high polynomial degree and extra derivative degrees of freedom, or certain mesh conditions.

An attractive approach to constructing Stokes elements in a simple fashion while ensuring mass conservation is to use non-conforming elements. 
For instance, the discrete Stokes complex
\begin{align}\label{seq: non-conforming }
\begin{diagram}
0 & \rTo & \mathbb{R}  & \rTo & \mathrm{Morley} & \rTo^{\curl_{h}} & \left [\mathrm{Crouzeix-Raviart}\right ]^{2} & \rTo^{\div_{h}} & \mathrm{DG} & \rTo &0,
\end{diagram}
\end{align}
consists of the  Morley and the vector Crouzeix-Raviart elements and piecewise derivatives $\curl_{h}$ and $\div_{h}$. Although the Morley element is an elegant non-conforming choice for the biharmonic problem, it is not convergent for the Poisson equation (c.f. \cite{Wang.M2001pt,nilssen2001robust}). Correspondingly, the vector Crouzeix-Raviart element is not desirable as an  $H(\div)$ element for the Hodge Laplacian problem. For the Darcy-Stokes-Brinkman problem, which is a perturbation of the flow problem with an $H(\div)$ Hodge Laplacian, the numerical experiments in \cite{mardal2002robust} demonstrate that the convergence with the Crouzeix-Raviart element deteriorates as the coefficient in front of the Laplace operator tends to zero. As argued in \cite{mardal2002robust,tai2006discrete}, a possible cure for this problem is to seek elements that are conforming in $H(\div)$ and non-conforming convergent in $\left [H^{1}\right ]^{n}$, then further construct discrete sequences that are conforming to the de Rham complex and non-conforming convergent as a Stokes complex. A similar complex is constructed on quadrilateral grids \cite{Zhang.S2016}.

On a given mesh, any approximation space can be characterized by local functions on each cell and the continuity across the boundaries of the cells, which we shall call the ``inter-element continuity'' in subsequent discussions.
Finite elements are a special kind of approximation in which  inter-element continuity can be ensured via constraints local to each element. Precisely, in Ciarlet's classical definition of finite elements,  a finite element is denoted  as a triple $(P,G,D)$ where $P$ is the approximation space (in this case, a space of polynomial differential forms), $G$ is the geometry (in this case, an $n$-cube) and $D$ is the set of functionals on $P$ that are associated to portions of $G$ (e.g.\ evaluation at a specific vertex of $G$, or integration against a specific test function along an edge of $G$). The inter-element continuity is imposed by requiring that these DoFs are single valued on neighboring elements. This is an extra locality condition and in general, approximation spaces, such as some spline spaces, may not fulfill this condition. In the theory of finite element systems (FES) \cite{christiansen2011topics},  another framework of finite elements developed by Christiansen and collaborators,  no degrees of freedom are explicitly used. The approximation space is viewed as a collection of spaces living on all subcells (of all dimensions) of the mesh. The inter-element continuity is imposed by requiring that all the traces of functions on higher dimensional cells coincide with the functions on lower dimensional cells (therefore the global functions are single valued). The original theory of the FES is tailored for finite element differential forms \cite{christiansen2011topics}, and is generalized to spaces with higher continuity in \cite{christiansen2016generalized}.

In general, there are two ways to reduce a piecewise-defined approximation space: reduce the size of local shape function spaces or increase the inter-element continuity. These two operations can always be done. However, from the perspective of finite elements, we require the first operation (local reduction) to preserve certain local approximation properties (e.g. containment of polynomials of a certain degree) and require the second (global reduction) to retain the locality of the approximation (e.g. finite elements defined in Ciarlet's sense or in the FES sense). From a viewpoint of approximation theories, the locality of the approximation spaces plus the local approximation properties guarantees the global approximation properties. 
In light of the finite element exterior calculus, one further demands that these two operations also preserve the exactness of a sequence of finite element spaces.

In this paper, we provide a canonical procedure for modifying conforming deRham subcomplexes into  non-conforming  convergent sequences of spaces of elements that can be used in problems requiring greater regularity. In 2D, our construction leads to simple elements for the Darcy-Stokes-Brinkman problem while in 3D our construction yields the Adini plate element~\cite{adini} and an $H(\curl)$ element that is conforming for Poisson/Stokes type problems. Instead of enriching an existing space with bubble functions as in, e.g.~\cite{mardal2002robust,tai2006discrete}, we reduce the local and global degrees of freedom of standard de Rham finite element complexes to sequences that provide the desired properties. From the perspective of non-conforming methods, consistency is preserved in the space reduction.  More precisely, if the consistency error in the Strang lemma converges to zero as $h\rightarrow 0$ for a space $V_{h}$, then the consistency error for any subspace of $V_{h}$ also tends to zero.

The ideas presented here are related to recent investigations into ``nonstandard'' vector elements. Stenberg introduced a family of ``nonstandard elements'' for the $H(\div)$ space on triangles and tetrahedra in~\cite{Stenberg2010}. These elements are subspaces of the classical BDM spaces that are continuous at vertices of the mesh. Enforcing this continuity constraint reduces the number of global degrees of freedom (DoFs). The canonical interpolations defined by the DoFs, however, do not commute with the differential operations; instead, Stenberg uses macroelement techniques to show the inf-sup condition holds~\cite{Stenberg2010}.  In~\cite{christiansen2016nodal}, Christiansen, Hu, and Hu derive complexes of nonstandard elements from a different perspective. The elements are constructed as variations of copies of Lagrange and Hermite elements, and as a result, canonical nodal bases are available in some cases. The work is partly motivated by an element for linear elasticity~\cite{hu2015family,Hu2015}. The extended periodic table in \cite{christiansen2016nodal} includes the triangular and tetrahedral Stenberg elements as a special case for the $H(\div)$ space and several new $H(\curl)$ finite elements.

The results presented in this paper can be regarded as a continuation of the discussions in \cite{christiansen2016nodal}. In both the simplicial and the cubical cases, the number of global DoFs is reduced. However, there are some prominent differences. The triangular Hermite element does not lead to convergent schemes for biharmonic problems, although modifications on the shape functions are possible to make it convergent \cite{Wang.M;Shi.Z;Xu.J2007NM}. 
In contrast, cubical elements with vertex derivative DoFs usually \textit{do} lead to convergent schemes for the biharmonic equation due to the geometric symmetry. The Adini plate element serves as an illuminating example: it is globally $C^{0}$ with derivative DoFs at vertices and comparable to the Hermite element on triangular meshes (c.f. \cite{hu2016capacity} and the references therein). In this paper, we will construct the Adini element by a new procedure, and place it within an entire finite element complex on squares or cubes.

Our approach to element construction is to provide two operations that can be applied to known finite element complexes: a ``DoF-transfer'' operation and a ``serendipity'' operation.  The DoF-transfer operation moves some edge DoFs to vertex DoFs. The local shape function spaces and bubbles do not change and the DoF-transfer preserves unisolvence and exactness. The serendipity operation eliminates some interior bubbles and DoFs at the same time in a way that preserves unisolvence and exactness. The idea of the serendipity operation bears some similarity to the serendipity reduction process in virtual element methods as described in~\cite{da2016serendipity,da2016serendipity2}, however, virtual element techniques are chiefly concerned with applicability to many-sided polygonal and generic polyhedral element geometries. Combining these two operations, we reduce the local and global spaces of various known discrete complexes on cubical meshes \cite{arnold2014finite,arnold2015finite,GK2016} to obtain Hermite, Adini and trimmed-Adini type families.  {The $H(\div)$ element in the 2D Hermite complex coincides with the nonstandard element introduced in Stenberg \cite{Stenberg2010}. }

The rest of this paper will be organized as follows. In Section \ref{sec:notation}, we introduce notation and background material. In Section \ref{sec:seren-transf}, we define the DoF-transfer and  serendipity operations. In Section \ref{sec:elements}, we define the Hermite, Adini, and trimmed-Adini elements and complexes.  Section \ref{sec:convergence} is devoted to the approximation and convergence properties of the new finite elements, including the discrete Korn inequality and the convergence as  non-conforming  elements.  In Section \ref{sec:minimal}, we further use the idea of the serendipity operation to construct a complex of minimal elements that are conforming as a de Rham complex and non-conforming as a Stokes complex.  We conclude in Section \ref{sec:conclusion} with some remarks and future directions.

\section{Notation and background}\label{sec:notation}

Let $\Omega\subset \mathbb{R}^{n}$ be a polyhedral domain and $\mathcal{T}_{h}$ be a mesh on $\Omega$ consisting of $n$-cubes. Sometimes we use $\mathcal{T}_{h}^{2}$ and $\mathcal{T}_{h}^{3}$ to denote 2D and 3D meshes respectively. 
A single square element is denoted by $\square_2$ and a single cube element by $\square_3$.
Given a mesh, we use $\mathcal{V}$, $\mathcal{E}$, $\mathcal{F}$ and $\mathcal{K}$ to denote the sets of vertices, edges, faces and three dimensional cells (zero to three dimensional cells) respectively. 
 We use $\bm{\nu}_{f}$ and $\bm{\tau}_{f}$ to denote the unit normal and tangential vectors of a cell  $f$, respectively, when applicable.

We use $\mathcal{P}_{r, s}(K)$, or simply $\mathcal{P}_{r, s}$, to denote the polynomial space of degree $r$ in $x$ and degree $s$ in $y$ on a domain $K$, i.e.
$$
\mathcal{P}_{r, s}:=\mathcal{P}_{r}(x)\times \mathcal{P}_{s}(y). 
$$
Similarly, in three dimensions, we define
$$
\mathcal{P}_{r, s, t}:=\mathcal{P}_{r}(x)\times \mathcal{P}_{s}(y)\times \mathcal{P}_{t}(z), \quad 
$$
When the orders are equal, we get the tensor product polynomial spaces, denoted
$$
\mathcal{Q}_{r}:=\mathcal{P}_{r,r},\quad\text{and}\quad\mathcal{Q}_{r}:=\mathcal{P}_{r, r, r}.
$$
%
%We define $\ker \left ( d, V\right )$ as the kernel of the differential operator $d$ in the space $V$, i.e.
%$$
%\ker \left ( d, V\right ):=\{v\in V: dv=0\}.
%$$
%
Following \cite{arnold2014finite}, we define the \textit{superlinear degree} of a monomial to be the total degree of factors that enter with quadratic or higher degrees. For example, the superlinear degree of $x^{2}y^{3}z$ is 5, where we count the degrees of $x$ and $y$, but not $z$. We use $\mathcal{S}_{r}$ to denote the polynomials with superlinear degree less than or equal to $r$.  There is a simple nesting of spaces given by $\mathcal{P}_{r}\subset \mathcal{S}_{r}\subset \mathcal{Q}_{r}$.

We review some basic facts from homological algebra; further details can be found, for instance, in \cite{bott2013differential, dummit2004abstract, Arnold.D;Falk.R;Winther.R.2006a}. A differential complex is a sequence of spaces $V_{i}$ and operators $d_{i}$ such that
\begin{align}\label{general-complex}
\begin{diagram}
0 & \rTo & V_{1} & \rTo^{d_{1}} &V_{2}&\rTo^{d_{2}} & \cdots  &  \rTo^{d_{n-1}} &V_{n} & \rTo^{d_n} & 0,
\end{diagram}
\end{align}
satisfying $d_{i+1}d_{i}=0$ for $i=1, 2, \cdots, n-1$; typically, this condition is denoted as $d^{2}=0$. Let $\ker(d_{i})$ be the kernel space of the operator $d_{i}$ in $V_{i}$, and $\mathrm{Im}(d_{i})$ be the image of the operator $d_{i}$ in $V_{i+1}$. Due to the complex property $d^{2}=0$, we have $\ker(d_{i})\subset \mathrm{Im}(d_{i-1})$ for each $i\geq 2$. Furthermore, if $\ker(d_{i})= \mathrm{Im}(d_{i-1})$, we say that the complex \eqref{general-complex} is exact at $V_{i}$. At the two ends of the sequence, the complex is exact at $V_{1}$ if $d_{1}$ is injective (with trivial kernel), and is exact at $V_{n}$ if $d_{n}$ is surjective (with trivial cokernel).  The complex \eqref{general-complex} is called exact if it is exact at all the spaces $V_{i}$. If each space in \eqref{general-complex} has finite dimension, then a necessary (but not sufficient) condition for the exactness of \eqref{general-complex} is the following dimension condition:
$$
\sum_{i=1}^{n} (-1)^{i}\dim (V_{i})=0. 
$$

For finite element spaces, the exactness of the local shape function space on each mesh element is called \textit{local exactness}, while that of the whole space, with the inter-element continuity taken into account, is called \textit{global exactness}. In subsequent discussions, the local exactness follows from classical results. Therefore we mean global exactness as a default. 

Another complex 
\begin{align*}
\begin{diagram}
0 & \rTo & W_{1} & \rTo^{d_{1}} &W_{2}&\rTo^{d_{2}} & \cdots  &  \rTo^{d_{n-1}} &W_{n} & \rTo^{} & 0
\end{diagram}
\end{align*}
is called a \textit{subcomplex} of \eqref{general-complex} if for each $i=1, 2, \cdots, n$, $W_{i}$ is a subspace of $V_{i}$ and the differential operators are the same.

Define 
$$
H(d, \Omega):=\left \{u\in L^{2}(\Omega): du\in L^{2}(\Omega)\right \},
$$
where $d$ is a differential operator. We use $\|u\|_{s, \Omega}$ to denote the $H^{s}$ norm of $u$ on $\Omega$.
 When there is no possible confusion, we also use $\|\cdot\|_{s}$ instead of $\|\cdot\|_{s, \Omega}$ and use $\|\cdot\|$ instead of the $L^{2}$ norm $\|\cdot\|_{0}$.
There are two de Rham sequences with the above type of Sobolev regularity in 2D:
  \begin{equation}\label{sequence1}
\begin{CD}
0@>>>\mathbb{R}@>>>H(\curl, \Omega) @>\curl>> {H}(\mathrm{div}, \Omega) @>\mathrm{div} >> L^{2}(\Omega)  @ > >>  0,
\end{CD}
\end{equation}
and
 \begin{equation}\label{sequence2}
\begin{CD}
0@>>>\mathbb{R}@>>>H({\grad}, \Omega) @>{\grad}>> {H}(\mathrm{rot}, \Omega) @>\mathrm{rot} >> L^{2}(\Omega)  @ > >>  0.
\end{CD}
\end{equation}
Here, the 2D curl operator maps a scalar function to a vector valued function, defined by 
\begin{align}\label{2Dcurl}
\curl u:=(-\partial_{2}u, \partial_{1}u)^{T}.
\end{align}
One can obtain \eqref{sequence2} by rotating the spaces in \eqref{sequence1} by $\pi/2$. Accordingly, in the remainder of this paper, we only consider \eqref{sequence1} in 2D.
There is only one 3D version of the sequence, which reads:
 \begin{equation}\label{sequence3}
\begin{CD}
0@>>>\mathbb{R}@>>>H({\grad}, \Omega) @>{\grad}>> {H}(\curl, \Omega)@>\curl>> {H}(\mathrm{div}, \Omega)  @>\mathrm{div} >> L^{2}(\Omega)  @ > >>  0.
\end{CD}
\end{equation}
Here, the 3D curl has its usual meaning, mapping a vector valued function to a vector valued function:
\begin{align}\label{3Dcurl}
\curl\bm{u}:=(\partial_{2}u_{3}-\partial_{3}u_{2}, \partial_{3}u_{1}-\partial_{1}u_{3}, \partial_{1}u_{2}-\partial_{2}u_{1})^{T}.
\end{align}
When there is no possible confusion, we will use $\curl$ to denote both \eqref{2Dcurl} and \eqref{3Dcurl}, as determined by the spatial dimension.
The sequences \eqref{sequence1}, \eqref{sequence2} and \eqref{sequence3} are all exact on contractible domains. 
We will work with three known finite element subcomplexes of the de Rham complex.
In 2D these are:
\begin{align}
\label{2d-fe-seqs}
\begin{diagram}
0& \rTo &\mathbb{R} & \rTo &  \mathcal{Q}^{-}_{r}\Lambda^{0}(\mathcal{T}_{h}^{2}) & \rTo^{\curl} & \mathcal{Q}^{-}_{r}\Lambda^{1}(\mathcal{T}_{h}^{2}) & \rTo^{\div} &\mathcal{Q}^{-}_{r}\Lambda^{2}(\mathcal{T}_{h}^{2}) & \rTo^{} & 0, 
\\
0& \rTo &\mathbb{R} & \rTo &  \mathcal{S}_{r+2}\Lambda^{0}(\mathcal{T}_{h}^{2}) & \rTo^{\curl} & \mathcal{S}_{r+1}\Lambda^{1}(\mathcal{T}_{h}^{2}) & \rTo^{\div} &\mathcal{S}_{r}\Lambda^{2}(\mathcal{T}_{h}^{2}) & \rTo^{} & 0,\\
0& \rTo &\mathbb{R} & \rTo &  \mathcal{S}^{-}_{r}\Lambda^{0}(\mathcal{T}_{h}^{2}) & \rTo^{\curl} & \mathcal{S}^{-}_{r}\Lambda^{1}(\mathcal{T}_{h}^{2}) & \rTo^{\div} &\mathcal{S}^{-}_{r}\Lambda^{2}(\mathcal{T}_{h}^{2}) & \rTo^{} & 0,
\end{diagram}
\end{align}
namely,  the 2D Raviart-Thomas (tensor product) type finite element complex~\cite{arnold2015finite}, the 2D BDM type complex~\cite{arnold2014finite}, and the 2D trimmed serendipity complex~\cite{GK2016}, respectively, for $r\geq 1$.
The 3D families can be similarly denoted by
\begin{align}
\label{3d-fe-seqs}
\begin{diagram}
0& \rTo &\mathbb{R} & \rTo &  \mathcal{Q}^{-}_{r}\Lambda^{0}(\mathcal{T}_{h}^{3}) & \rTo^{\grad} & \mathcal{Q}^{-}_{r}\Lambda^{1}(\mathcal{T}_{h}^{3}) & \rTo^{\curl} &\mathcal{Q}^{-}_{r}\Lambda^{2}(\mathcal{T}_{h}^{3}) & \rTo^{\div} &\mathcal{Q}^{-}_{r}\Lambda^{3}(\mathcal{T}_{h}^{3})& \rTo^{} & 0, \\
0& \rTo &\mathbb{R} & \rTo &  \mathcal{S}_{r+3}\Lambda^{0}(\mathcal{T}_{h}^{3}) & \rTo^{\grad} & \mathcal{S}_{r+2}\Lambda^{1}(\mathcal{T}_{h}^{3}) & \rTo^{\curl} &\mathcal{S}_{r+1}\Lambda^{2}(\mathcal{T}_{h}^{3}) & \rTo^{\div} &\mathcal{S}_{r}\Lambda^{3}(\mathcal{T}_{h}^{3})& \rTo^{} & 0, \\
0& \rTo &\mathbb{R} & \rTo &  \mathcal{S}^{-}_{r}\Lambda^{0}(\mathcal{T}_{h}^{3}) & \rTo^{\grad} & \mathcal{S}^{-}_{r}\Lambda^{1}(\mathcal{T}_{h}^{3}) & \rTo^{\curl} &\mathcal{S}^{-}_{r}\Lambda^{2}(\mathcal{T}_{h}^{3}) & \rTo^{\div} &\mathcal{S}^{-}_{r}\Lambda^{3}(\mathcal{T}_{h}^{3})& \rTo^{} & 0. 
\end{diagram}
\end{align}

Some problems involve de Rham complexes with enhanced smoothness. For example, for Stokes flows, the velocity belongs to $ \left [{H}^{1}_{0}\right ]^{n}$ while the pressure is in $L^{2}_{0}$ with vanishing integration, and $\div:  \left [{H}^{1}_{0}\right ]^{n}\mapsto L^{2}_{0}(\Omega)$ is onto. In this case, we consider the 2D complex
 \begin{equation}\label{sequence1-smooth}
\begin{CD}
0@>>>\mathbb{R}@>>>H^{2}( \Omega) @>\curl>> \left [{H}^{1}( \Omega) \right ]^{2}@>\mathrm{div} >> L^{2}(\Omega)  @ > >>  0,
\end{CD}
\end{equation}
and the 3D version
 \begin{equation}\label{sequence3-smooth}
\begin{CD}
0@>>>\mathbb{R}@>>>H^{2}( \Omega) @>{\grad}>> {H}^{1}(\curl, \Omega)@>\curl>>  \left [{H}^{1}(\Omega) \right ]^{3} @>\mathrm{div} >> L^{2}(\Omega)  @ > >>  0,
\end{CD}
\end{equation}
where 
$$
H^{1}(\curl, \Omega):=\left\{u\in \left [H^{1}(\Omega)\right ]^{3}: \curl u\in \left [H^{1}(\Omega)\right ]^{3}\right\}.
$$
The subsequent discussions involve another 3D smooth de Rham complex
 \begin{equation}\label{sequence3-smooth2}
\begin{CD}
0@>>>\mathbb{R}@>>>H^{2}( \Omega) @>{\grad}>>  \left [{H}^{1}(\Omega)\right ]^{3}@>\curl>> {H}(\div, \Omega)  @>\mathrm{div} >> L^{2}(\Omega)  @ > >>  0.
\end{CD}
\end{equation}
The sequence \eqref{sequence3-smooth2} is exact on any contractible domain $\Omega$.

%%%%%%%%%%%%%%%%%%%%
\section{Serendipity and DoF-Transfer Operations}\label{sec:seren-transf}

In this section, we propose two operations for reducing piecewise-defined approximation spaces on cubical elements: a ``serendipity'' operation, which reduces local bubble functions, and a ``DoF-transfer'' operation, which enhances inter-element continuity by moving degrees of freedom. All the abovementioned properties are achieved, including preservation of the exactness.

%==============================

The serendipity operation, denoted by $\mathscr{Q}$, eliminates some interior DoFs and corresponding shape functions on individual elements.
Figure \ref{fig:serendipity} shows an example of the serendipity operation applied to the tensor product element $\mathcal{Q}_3^-\Lambda^0(\square_2)$, resulting in the serendipity element $\mathcal{S}_3\Lambda^0(\square_2)$.
On 0-forms, there is only one kind of serendipity operation, but on $k$-forms with $k\geq 1$, there are serendipity operations to both the regular and trimmed serendipity sequences; we denote both by $\mathscr{Q}$ as it will be clear from context which one is being used.

The DoF-transfer operation, denoted by $\mathscr{Q}$, ``moves'' two DoFs per edge from edge-association to vertex-association, interpolating either a directional derivative at the vertex (in the case of 0-forms) or a coordinate value of the vector field (in the case of 1-forms).
Figure~\ref{fig:transfer} shows an example of each type. 
The serendipity operation reduces local DoFs while the DoF-transfer operation reduces global DoFs and, as we will show, $\mathscr{T}$ and $\mathscr{Q}$ both preserve unisolvence and exactness.
\begin{center}
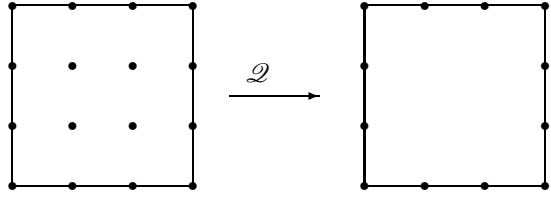
\begin{figure}
\setlength{\unitlength}{1.2cm}
\begin{picture}(2,2)(-4,0)
\put(0,0){
\begin{picture}(2,2)
\put(-1, 0){\line(1,0){2}} 
\put(1, 2){\line(-1,0){2}}
\put(-1,0){\line(0,1){2}}
\put(1, 2){\line(0, -1){2}}
\put(-1,0){\circle*{0.1}}
%\put(-1,0){\circle{0.2}}
\put(1.,0){\circle*{0.1}}
%\put(1.,0){\circle{0.2}}
\put(-1,2){\circle*{0.1}}
%\put(-1,2){\circle{0.2}}
\put(1,2){\circle*{0.1}}

\put(-1,0.666){\circle*{0.1}}
\put(-1,1.333){\circle*{0.1}}
\put(1,0.666){\circle*{0.1}}
\put(1,1.333){\circle*{0.1}}
\put(-0.333,0){\circle*{0.1}}
\put(0.333,0){\circle*{0.1}}
\put(-0.333,2){\circle*{0.1}}
\put(0.333,2){\circle*{0.1}}

\put(-0.333,0.666){\circle*{0.1}}
\put(0.333,0.666){\circle*{0.1}}
\put(-0.333,1.333){\circle*{0.1}}
\put(0.333,1.333){\circle*{0.1}}
\end{picture}
}

\put(1.5, 1){\vector(1, 0){1}}
\put(1.68, 1.15){$\mathscr{Q}$}

\put(4,0){
\put(-1, 0){\line(1,0){2}} 
\put(1, 2){\line(-1,0){2}}
\put(-1,0){\line(0,1){2}}
\put(1, 2){\line(0, -1){2}}
\put(-1,0){\circle*{0.1}}
%\put(-1,0){\circle{0.2}}
\put(1.,0){\circle*{0.1}}
%\put(1.,0){\circle{0.2}}
\put(-1,2){\circle*{0.1}}
%\put(-1,2){\circle{0.2}}
\put(1,2){\circle*{0.1}}

\put(-1,0.666){\circle*{0.1}}
\put(-1,1.333){\circle*{0.1}}
\put(1,0.666){\circle*{0.1}}
\put(1,1.333){\circle*{0.1}}
\put(-0.333,0){\circle*{0.1}}
\put(0.333,0){\circle*{0.1}}
\put(-0.333,2){\circle*{0.1}}
\put(0.333,2){\circle*{0.1}}
}
\end{picture}
\caption{An instance of the serendipity operation, taking $\mathcal{Q}_3^-\Lambda^0(\square_2)$ to $\mathcal{S}_3\Lambda^0(\square_2)$.}
\label{fig:serendipity}
\end{figure}
\end{center}

\begin{center}
\begin{figure}
\setlength{\unitlength}{0.9cm}
\begin{picture}(1,2)(-4,0)
\put(-1,0){ 
\put(-1, 0){\line(1,0){2}} 
\put(1, 2){\line(-1,0){2}}
\put(-1,0){\line(0,1){2}}
\put(1, 2){\line(0, -1){2}}
\put(-1,0){\circle*{0.1}}
%\put(-1,0){\circle{0.2}}
\put(1.,0){\circle*{0.1}}
%\put(1.,0){\circle{0.2}}
\put(-1,2){\circle*{0.1}}
%\put(-1,2){\circle{0.2}}
\put(1,2){\circle*{0.1}}
\put(-1,0.666){\circle*{0.1}}
\put(-1,1.333){\circle*{0.1}}
\put(1,0.666){\circle*{0.1}}
\put(1,1.333){\circle*{0.1}}
\put(-0.333,0){\circle*{0.1}}
\put(0.333,0){\circle*{0.1}}
\put(-0.333,2){\circle*{0.1}}
\put(0.333,2){\circle*{0.1}}
\put(-0.333,0.666){\circle*{0.1}}
\put(0.333,0.666){\circle*{0.1}}
\put(-0.333,1.333){\circle*{0.1}}
\put(0.333,1.333){\circle*{0.1}}
}
\put(0.5, 1){\vector(1, 0){1}}
\put(0.68, 1.15){$\mathscr{T}$}
\put(2.7,0){
\begin{picture}(2,2)
\put(-1, 0){\line(1,0){2}} 
\put(1, 2){\line(-1,0){2}}
\put(-1,0){\line(0,1){2}}
\put(1, 2){\line(0, -1){2}}
\put(-1,0){\circle*{0.1}}
\put(-1,0){\circle{0.2}}
\put(1.,0){\circle*{0.1}}
\put(1.,0){\circle{0.2}}
\put(-1,2){\circle*{0.1}}
\put(-1,2){\circle{0.2}}
\put(1,2){\circle*{0.1}}
\put(1,2){\circle{0.2}}

\put(-0.333,0.666){\circle*{0.1}}
\put(0.333,0.666){\circle*{0.1}}
\put(-0.333,1.333){\circle*{0.1}}
\put(0.333,1.333){\circle*{0.1}}
\end{picture}

}

\put(6.3,0){ 
\put(-1, 0){\line(1,0){2}} 
\put(1, 2){\line(-1,0){2}}
\put(-1,0){\line(0,1){2}}
\put(1, 2){\line(0, -1){2}}

%\put(-0,0){\circle*{0.1}}
%\put(0.5,0){\circle*{0.1}}
\put(-0.5,2){\vector(0, 1){0.5}}
\put(0.5,2){\vector(0, 1){0.5}}
\put(0,2){\vector(0, 1){0.5}}
\put(-0.5,0){\vector(0, -1){0.5}}
\put(0.5,0){\vector(0, -1){0.5}}
\put(0,0){\vector(0, -1){0.5}}
\put(1, 0.5){\vector( 1, 0){0.5}}
\put(1, 1.5){\vector( 1, 0){0.5}}
\put(1, 1){\vector(1, 0){0.5}}
\put(-1, 0.5){\vector( -1, 0){0.5}}
\put(-1, 1.5){\vector( -1, 0){0.5}}
\put(-1, 1){\vector(-1, 0){0.5}}
%\put(0.5,2){\circle*{0.1}}
%\put(0,0){\circle*{0.1}}
%\put(-0,2){\circle*{0.1}}
%\put(0.333,2){\circle*{0.1}}
\put(-0.4, 0.8){+12}
%\put(-0.333,0.666){\circle*{0.1}}
%\put(0.333,0.666){\circle*{0.1}}
%\put(-0.333,1.333){\circle*{0.1}}
%\put(0.333,1.333){\circle*{0.1}}
}
\put(8, 1){\vector(1, 0){1}}
\put(8.18, 1.15){$\mathscr{T}$}
\put(10.5,0){
\begin{picture}(2,2)
\put(-1, 0){\line(1,0){2}} 
\put(1, 2){\line(-1,0){2}}
\put(-1,0){\line(0,1){2}}
\put(1, 2){\line(0, -1){2}}
\put(-0.95,0){\circle*{0.1}}
\put(-1.05,0){\circle*{0.1}}
\put(0.95,0){\circle*{0.1}}
\put(1.05,0){\circle*{0.1}}
\put(-0.95,2){\circle*{0.1}}
\put(-1.05,2){\circle*{0.1}}
\put(0.95,2){\circle*{0.1}}
\put(1.05,2){\circle*{0.1}}

\put(0,0){\vector(0, -1){0.5}}
\put(0,2){\vector(0, 1){0.5}}
\put(1, 1){\vector(1, 0){0.5}}
\put(-1, 1){\vector(-1, 0){0.5}}

\put(-0.4, 0.8){+12}
\end{picture}
}
\end{picture}

\caption{An instance of the DoF-transfer operation, taking $\mathcal{Q}_3^-\Lambda^0(\square_2)$ to $\mathcal{G}_3^-\Lambda^0(\square_2)$ and taking $\mathcal{Q}_3^-\Lambda^1(\square_2)$ to $\mathcal{G}_3^-\Lambda^1(\square_2)$.}
\label{fig:transfer}
\end{figure}
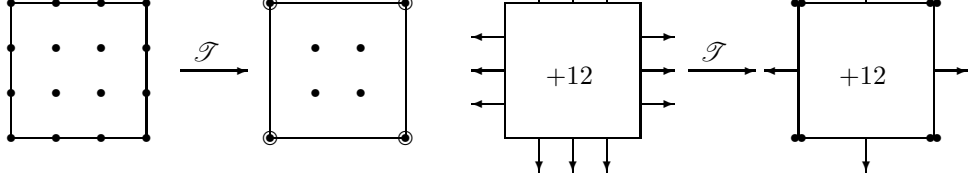
\end{center}

\noindent
Our subsequent constructions of finite element sequences are summarized in diagrams \eqref{3d-diagram-serendipity} and \eqref{3d-diagram-trimmedSrdp}. 
They employ the finite element sequences for $\mathcal{Q}^-\Lambda^{\bs}$, $\mathcal{S}\Lambda^{\bs}$, and $\mathcal{S}^-\Lambda^{\bs}$ given in \eqref{2d-fe-seqs}.
%The serendipity construction and DoF-transferring of the 3D complexes can be also summarized in such commuting diagrams. 

\begin{equation}\label{3d-diagram-serendipity}
\begin{tikzcd}
& \mathcal{Q}_{r+2}^-\Lambda^{0} \arrow[dl, "\mathscr{T}"] \arrow[rr, "\curl"] \arrow[dd, "\mathscr{Q}"]& & \mathcal{Q}_{r+2}^-\Lambda^{1} \arrow[dl, "\mathscr{T}"]\arrow[rr, "\div"]  \arrow[dd, "\mathscr{Q}"] & & \mathcal{Q}_{r+2}^-\Lambda^{2} \arrow[dl, "\mathscr{T}"]\arrow[dd, "\mathscr{Q}"] \\
 \mathcal{G}^{-}_{r+2}\Lambda^{0} \arrow[rr, crossing over, "\curl"] \arrow[dd, crossing over, "\mathscr{Q}"] & & \mathcal{G}^{-}_{r+2}\Lambda^{1}   \arrow[rr, crossing over, "\div"]  & &  \mathcal{G}^{-}_{r+2}\Lambda^{2} \\
& \mathcal{S}_{r+2}\Lambda^{0} \arrow[dl, "\mathscr{T}"] \arrow[rr] &  & \mathcal{S}_{r+1}\Lambda^{1} \arrow[dl, "\mathscr{T}"]  \arrow[rr]  & & \mathcal{S}_{r}\Lambda^{2} \arrow[dl, "\mathscr{T}"] \\
 \mathcal{A}_{r+2}\Lambda^{0} \arrow[rr, "\curl"] & &  \mathcal{A}_{r+1}\Lambda^{1}  \arrow[uu, leftarrow, crossing over, "\mathscr{Q}"]  \arrow[rr, "\div"]  & &  \mathcal{A}_{r}\Lambda^{2} \arrow[uu, leftarrow, crossing over, "\mathscr{Q}"]\\
\end{tikzcd}
\end{equation}

In diagram \eqref{3d-diagram-serendipity}, we start from the standard tensor product complex $\mathcal{Q}^-\Lambda^{\bs}$ (top, back), with the differential operators as shown.
%Here, the curl is defined by \eqref{2Dcurl}.
By a serendipity operation $\mathscr{Q}$, we obtain the BDM-type complex $\mathcal{S}\Lambda^{\bs}$  (bottom, back).
By the DoF-transfer operation $\mathscr{T}$, we get the Hermite complex $\mathcal{G}^{-}\Lambda^{\bs}$ (top, front). 
Applying the serendipity operation to $\mathcal{G}^{-}\Lambda^{\bs}$ or DoF-transferring $\mathcal{S}\Lambda^{\bs}$, we obtain the Adini complex $\mathcal{A}\Lambda^{\bs}$ (bottom, front).  
We will show that the diagram \eqref{3d-diagram-serendipity} commutes, i.e.\ that the operations $\mathscr{Q}$, $\mathscr{T}$ and the differential operators commute with each other. 
The extension of this diagram to complexes in 3D is straightforward.

A variant of this construction is described in diagram \eqref{3d-diagram-trimmedSrdp}.
Here, we start again with $\mathcal{Q}^-\Lambda^{\bs}$ but use a serendipity operation that results in the trimmed serendipity sequence  $\mathcal{S}^-\Lambda^{\bs}$.
Applying both $\mathscr{Q}$ and $\mathscr{T}$ gives a ``trimmed-Adini'' complex, denoted $\mathcal{A}^-\Lambda^{\bs}$.
As indicated in the diagram, there are some equalities between trimmed and non-trimmed-Adini elements when $k=0$ or $n$, based on identities for serendipity-type elements, but for $0<k<n$, the Adini and trimmed-Adini elements are truly distinct.

\begin{equation}\label{3d-diagram-trimmedSrdp}
\begin{tikzcd}
& \mathcal{Q}_{r+2}^-\Lambda^{0} \arrow[dl, "\mathscr{T}"] \arrow[rr, "\curl"] \arrow[dd, "\mathscr{Q}"]& & \mathcal{Q}_{r+2}^-\Lambda^{1} \arrow[dl, "\mathscr{T}"]\arrow[rr, "\div"]  \arrow[dd, "\mathscr{Q}"] & & \mathcal{Q}_{r+2}^-\Lambda^{2} \arrow[dl, "\mathscr{T}"]\arrow[dd, "\mathscr{Q}"] \\
 \mathcal{G}^{-}_{r+2}\Lambda^{0} \arrow[rr, crossing over, "\curl"] \arrow[dd, crossing over, "\mathscr{Q}"] & & \mathcal{G}^{-}_{r+2}\Lambda^{1}   \arrow[rr, crossing over, "\div"]  & &  \mathcal{G}^{-}_{r+2}\Lambda^{2} \\
& \mathcal{S}_{r+2}^-\Lambda^{0} \arrow[dl, "\mathscr{T}"] \arrow[rr] &  & \mathcal{S}_{r+2}^-\Lambda^{1} \arrow[dl, "\mathscr{T}"]  \arrow[rr]  & & \mathcal{S}_{r+2}^-\Lambda^{2} \arrow[dl, "\mathscr{T}"] \\
\mathcal{A}_{r+2}^{-}\Lambda^{0} \arrow[rr, "\curl"] & &  \mathcal{A}_{r+2}^{-}\Lambda^{1}  \arrow[uu, leftarrow, crossing over, "\mathscr{Q}"]  \arrow[rr, "\div"]  & &  \mathcal{A}_{r+2}^{-}\Lambda^{2} \arrow[uu, leftarrow, crossing over, "\mathscr{Q}"]\\[-6mm]
 = & & & & = \\[-6mm]
\mathcal{A}_{r+2}\Lambda^{0} & & & & \mathcal{A}_{r+1}\Lambda^{2} 
\end{tikzcd}
\end{equation}

We now formalize the operations $\mathscr{T}$ and $\mathscr{Q}$.
Degrees of freedom on an $n$-cube $\square_n$ when $P$ is a space of polynomial differential $k$-forms are typically described by
\begin{align}\label{dof-formal-feec}
u \longmapsto \int_f(\text{tr}_fu)\wedge q,\qquad q\in \mathcal{I}_f, 
\end{align}
where $f\prec\square_n$ is a sub-face of the cube and $\mathcal{I}_f$ is a space of polynomial differential $(n-k)$-forms.
We call $\mathcal{I}_f$ the index space associated to $f$ and there is a canonical association
\begin{align}\label{dof-index-assoc}
D\longleftrightarrow \bigoplus_{d=k}^n\bigoplus_{\footnotesize \begin{array}{c} f\prec\square_n \\ \dim f = d\end{array}}\mathcal{I}_f
\end{align}
For instance, DoFs associated to $\mathcal{Q}_r^-\Lambda^k(\square_n)$ have $\mathcal{I}_f:=\mathcal{Q}_{r-1}^-\Lambda^{d-k}(f)$ for each $f\prec\square_n$ \cite[equation (13)]{arnold2015finite}.
For finite elements conforming with respect to the standard deRham sequence, i.e.\ \eqref{sequence1} and \eqref{sequence3}, DoFs are associated to faces of dimension 0 only in the case of 0-forms ($k=0$).
In those cases,  $\dim\mathcal{I}_f=1$ and (\ref{dof-formal-feec}) is interpreted as evaluation of the 0-form at the vertex.

The non-standard elements considered here have two additional kinds of DoFs.
For a 0-form $u$ and vertex $x$, we allow partial derivative evaluation at vertices, i.e.\
\begin{align}\label{dof-part-deriv}
u\longmapsto \partial_{i}u(x)
\end{align}
where $i$ indicates the direction of an edge $e_i$ incident to $x$.
For a 1-form $\bm{v}$ and vertex $x$, we allow evaluation of the 1-form at the vertex, i.e.\
\begin{align}\label{dof-vec-eval}
\bm{v}\mapsto \bm{v}(x),
\end{align}
where $\bm{v}(x)$ is a vector in $\mathbb{R}^2$ or $\mathbb{R}^3$.
While these DoFs are associated to a vertex of the geometry in the global setting, we observe that they require both a vertex and an incident edge direction to be characterized  locally, e.g.\ the component of the vector $\bm{v}(x)$ in the 1-form case.
Therefore, we continue to treat these as edge DoFs to simplify the  upcoming formalism.

Let $n=2$ or 3.
The serendipity operation $\mathscr{Q}:(P_1,\square_n,D_1)\rightarrow (P_2,\square_n,D_2)$ can be thought of as acting component-wise on $P_1$ and $D_1$ and should satisfy the following conditions:
\begin{itemize}
\item $(P_1,\square_n,D_1)$ and $(P_2,\square_n,D_2)$ are finite elements, and $\mathscr{Q}$ is onto.
\item $P_2\subset P_1$ and $\mathscr{Q}:P_1\rightarrow P_2$ is a projection.
\item 
%$\mathscr{Q}:D_1\rightarrow D_2$ only affects DoFs associated to faces $f\prec\square_n$ with $\dim f\geq 2$.
$\mathscr{Q}$ is the identity on DoFs associated to vertices or edges of $\square_n$.
For $\max(k,2)\leq d\leq n$ and each $f\prec\square_n$ with $\dim f=d$, let $\mathcal{I}_{f,1}$ and $\mathcal{I}_{f,2}$ denote the index spaces from (\ref{dof-index-assoc}) from $D_1$ and $D_2$, respectively.
Then $\mathcal{I}_{f,2}\subseteq \mathcal{I}_{f,1}$ and $\mathscr{Q}:\mathcal{I}_{f,1}\rightarrow \mathcal{I}_{f,2}$ is a projection.
$\mathscr{Q}:D_1\rightarrow D_2$ is the linear morphism that is determined by these conditions via (\ref{dof-index-assoc}).
\end{itemize}

The DoF-transferring operation $\mathscr{T}:(P_1,\square_n,D_1)\rightarrow (P_2,\square_n,D_2)$ can be thought of as acting component-wise on $P_1$ and $D_1$, and should satisfy the following conditions:
\begin{itemize}
\item $(P_1,\square_n,D_1)$ and $(P_2,\square_n,D_2)$ are finite elements, and $\mathscr{Q}$ is onto.
\item $P_1=P_2$ and $\mathscr{T}:P_1\rightarrow P_2$ is the identity map.
\item $D_1$ only has DoFs of type (\ref{dof-formal-feec}).
\item $\mathscr{T}:D_1\rightarrow D_2$ only affects DoFs associated to edges $e\prec\square_n$ with $\dim\mathcal{I}_e\geq 2$, and leaves all other DoFs fixed.  
On each such edge $e$, $\mathscr{T}$ changes two DoFs from type (\ref{dof-formal-feec}) to type \eqref{dof-part-deriv} or \eqref{dof-vec-eval}, according to whether $P_1$ is a space of 0-forms or 1-forms, respectively.

\end{itemize}

\begin{lemma}\label{lem:commuting}
If $\mathscr{Q}$ leads to the same local shape functions for both the DoF-transferred and the original elements, then diagrams \eqref{3d-diagram-serendipity} and \eqref{3d-diagram-trimmedSrdp} are commutative.
\end{lemma}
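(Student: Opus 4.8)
The plan is to verify commutativity face-by-face of the two cubes in \eqref{3d-diagram-serendipity} and \eqref{3d-diagram-trimmedSrdp}, reducing everything to two elementary observations: (i) the differential operators $\curl$, $\div$ (and $\grad$ in 3D) are the genuine exterior derivative $d$ on polynomial differential forms, hence they do not depend on the choice of DoFs and are literally the \emph{same} maps along the front and back of each diagram; and (ii) by construction $\mathscr{T}$ is the identity on shape function spaces ($P_1=P_2$) and $\mathscr{Q}$ is a projection $P_1\to P_2$ with $P_2\subset P_1$. Since each space in the diagrams is the global assembly of a local element, and since global assembly is functorial in the pair (shape functions, inter-element continuity), it suffices to check commutativity of the corresponding diagrams of \emph{local} shape function spaces on a single $\square_n$, together with compatibility of the inter-element continuity classes.

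First I would treat the squares whose only vertical arrows are $\mathscr{T}$ (the top and bottom horizontal slabs). Here the vertical map is the identity on shape functions, so the square
$\begin{smallmatrix} P & \xrightarrow{d} & P' \\ \Vert & & \Vert \\ P & \xrightarrow{d} & P'\end{smallmatrix}$
commutes trivially at the level of local spaces. The only content is that the \emph{global} spaces on the front are well defined, i.e.\ that $\mathscr{T}$ produces a genuine finite element complex: this is exactly the unisolvence-and-exactness assertion already granted for the $\mathscr{T}$ operation in the paragraph defining it, so nothing new is needed. Next I would treat the squares whose vertical arrows are $\mathscr{Q}$ (the "back" face $\mathcal{Q}^-\Lambda^\bs\to\mathcal{S}\Lambda^\bs$ or $\mathcal{S}^-\Lambda^\bs$, and the "front" face $\mathcal{G}^-\Lambda^\bs\to\mathcal{A}\Lambda^\bs$). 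Commutativity $d\circ\mathscr{Q}=\mathscr{Q}\circ d$ of local shape function spaces is precisely the statement that $\mathcal{S}\Lambda^\bs$ (resp.\ $\mathcal{S}^-\Lambda^\bs$, resp.\ $\mathcal{S}^-_r\Lambda^\bs$ in the trimmed case) is a \emph{subcomplex} of $\mathcal{Q}^-_r\Lambda^\bs$ on which $\mathscr{Q}$ restricts to the identity and is a projection off the complement — this is the classical content of \cite{arnold2014finite,GK2016} and is inherited verbatim once the hypothesis of the lemma ensures that applying $\mathscr{Q}$ to $\mathcal{G}^-\Lambda^\bs$ yields the same local shape function spaces $\mathcal{A}\Lambda^\bs$ as applying it to $\mathcal{S}\Lambda^\bs$ after $\mathscr{T}$. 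Finally, the two remaining "slanted" faces (the ones carrying $\mathscr{T}$ between the $\mathscr{Q}$-levels) commute because $\mathscr{T}$ does not change shape functions and $\mathscr{Q}$ acts only on face DoFs of dimension $\ge 2$ while $\mathscr{T}$ acts only on edge DoFs — their index-space supports in \eqref{dof-index-assoc} are disjoint, so the two operations commute as linear morphisms on $D$.

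The one genuinely non-formal point — and the reason the lemma is stated with the hypothesis "$\mathscr{Q}$ leads to the same local shape functions for both the DoF-transferred and the original elements" — is that \emph{a priori} the serendipity reduction depends on the DoF structure, not just on the shape function space: $\mathscr{Q}$ is defined by fixing vertex/edge DoFs and projecting the higher-dimensional index spaces $\mathcal{I}_f$, and after a $\mathscr{T}$ operation those edge DoFs have a different analytic form (\eqref{dof-part-deriv} or \eqref{dof-vec-eval} rather than \eqref{dof-formal-feec}). The hypothesis postulates that the resulting reduced \emph{shape function space} is nonetheless unchanged, i.e.\ $\mathscr{Q}$ commutes with the "forget the DoFs, keep the $P$" functor; granting this, the square $\mathcal{G}^-\Lambda^\bs\to\mathcal{A}\Lambda^\bs$ versus $\mathcal{S}\Lambda^\bs$ (and likewise in the trimmed case) closes up and the whole cube commutes. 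I expect the main obstacle in writing the full proof to be bookkeeping: carefully matching, for each $k$ and each $n=2,3$, the index spaces $\mathcal{I}_f$ before and after each operation so that "disjoint support $\Rightarrow$ commute" is rigorous, and verifying that along the front faces the global inter-element continuity one obtains by assembling $\mathcal{A}\Lambda^\bs$ via the two routes (serendipity-then-nothing on $\mathcal{G}^-$, versus $\mathscr{T}$ on $\mathcal{S}$) agrees — which again is exactly what the stated hypothesis buys us. With that hypothesis in hand, each of the finitely many elementary squares commutes, and composing them gives commutativity of \eqref{3d-diagram-serendipity} and \eqref{3d-diagram-trimmedSrdp}.
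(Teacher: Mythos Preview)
Your proposal is correct and rests on the same core observation as the paper, though you expand it considerably. The paper's entire proof is one sentence: ``$\mathscr{T}\circ\mathscr{Q}$ and $\mathscr{Q}\circ\mathscr{T}$ yield elements with same local spaces and same inter-element continuity.'' That single line encodes precisely your two key points --- the hypothesis guarantees identical local shape function spaces, and the disjoint-support reasoning ($\mathscr{T}$ touches only edge DoFs, $\mathscr{Q}$ only DoFs on faces of dimension $\geq 2$) guarantees identical inter-element continuity. The paper does not carry out a face-by-face check of the squares involving $d$; it treats those as implicit, since each row is already known to be a subcomplex and $\mathscr{T}$, $\mathscr{Q}$ are defined at the level of finite element triples rather than as genuine chain maps. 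Your more careful bookkeeping is not wrong, just more than the paper deems necessary.
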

\begin{proof}
The result holds because $\mathscr{T}\circ \mathscr{Q}$ and $\mathscr{Q}\circ \mathscr{T}$ yield elements with same local spaces and same inter-element continuity. 
\end{proof}
The condition for $\mathscr{Q}$ in Lemma \ref{lem:commuting} is fulfilled in all the examples below for the Hermite, Adini and trimmed-Adini families.
We now verify some properties of the two operations.

\begin{lemma}\label{lem:DoF-unisolvence}
The DoF-transfer operation preserves unisolvence, i.e.\ if $(P,\square_n,D)$ defines a unisolvent finite element, then $\mathscr{T}(P,\square_n,D)$ is also unisolvent.
\end{lemma}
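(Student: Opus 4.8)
The plan is to show that the DoF-transfer operation $\mathscr{T}$ produces a new set of functionals $D_2$ on the same polynomial space $P_2 = P_1 =: P$, and that $D_2$ remains a basis for the dual space $P^*$. Since $\dim D_1 = \dim P = \dim D_2$ (the operation only replaces DoFs, never adds or removes them), unisolvence of $(P,\square_n,D_2)$ is equivalent to showing that the only $u \in P$ annihilated by every functional in $D_2$ is $u = 0$. So the task reduces to a linear-algebraic kernel computation, carried out edge by edge.

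**First I would** set up notation following the formalization just given: on each edge $e \prec \square_n$ with $\dim\mathcal{I}_e \ge 2$, the operation leaves all DoFs fixed except that it swaps two edge moments of type \eqref{dof-formal-feec}, namely $u \mapsto \int_e (\tr_e u)\wedge q$ for two chosen $q$'s, for the two vertex DoFs of type \eqref{dof-part-deriv} (for 0-forms) or \eqref{dof-vec-eval} (for 1-forms) located at the two endpoints of $e$. All vertex DoFs of the original element, all face and interior DoFs, and all unmodified edge moments are retained verbatim. Now suppose $u \in P$ satisfies $\ell(u) = 0$ for every $\ell \in D_2$. I want to deduce that $\ell(u) = 0$ for every $\ell \in D_1$ as well; then unisolvence of $(P,\square_n,D_1)$ forces $u = 0$.

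**The key step** is to recover the two suppressed edge-moment DoFs on each edge $e$ from the information that is retained in $D_2$. Restrict attention to a single edge $e$ with endpoints $x_0, x_1$ and edge-direction index $i$. For 0-forms: the retained DoFs include the vertex values $u(x_0), u(x_1)$ and the new vertex-derivative DoFs $\partial_i u(x_0), \partial_i u(x_1)$, all of which vanish, together with all edge moments against $\mathcal{I}_e$ except the two that were removed. The trace $\tr_e u$ is a univariate polynomial in the edge parameter of degree $\dim\mathcal{I}_e + 1$ (consistent with the conforming structure: two endpoint conditions plus $\dim\mathcal{I}_e$ interior moments determine it). Vanishing of $u(x_0), u(x_1), \partial_i u(x_0), \partial_i u(x_1)$ means $\tr_e u$ vanishes to second order at both endpoints, hence $\tr_e u$ is divisible by the square of the edge bubble, i.e.\ $\tr_e u = b_e^2\, w$ for some polynomial $w$ of degree $\dim\mathcal{I}_e - 3$; one then checks that the $\dim\mathcal{I}_e - 2$ retained interior moments pair with $b_e^2 w$ against a spanning subset of $\mathcal{I}_e$ of the right dimension to force $w = 0$, whence $\tr_e u = 0$ and in particular the two removed moments also vanish. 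The 1-form case is analogous: $\tr_e\bm{v}$ is the scalar tangential component along $e$, the new vertex DoFs \eqref{dof-vec-eval} at $x_0, x_1$ supply the endpoint values of this component (the tangential component is one coordinate of $\bm v(x_j)$), and the same bubble-factorization argument applies. Once $\ell(u) = 0$ for all $\ell \in D_1$, apply unisolvence of the original element.

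**The main obstacle** I anticipate is the dimension bookkeeping in the factorization argument — verifying in each of the three source complexes $\mathcal{Q}^-_r\Lambda^k$, $\mathcal{S}_r\Lambda^k$, $\mathcal{S}^-_r\Lambda^k$ and in both dimensions $n=2,3$ that the number of retained interior edge moments exactly matches $\deg w + 1$, so that the pairing is nondegenerate; this is where the hypothesis ``$\mathscr{T}$ only affects edges with $\dim\mathcal{I}_e \ge 2$'' is essential, since removing two endpoint conditions worth of freedom requires at least two moments to remain available as substitutes. A secondary point to handle carefully is that the derivative DoF \eqref{dof-part-deriv} involves the full gradient direction $\partial_i$, not just the tangential derivative along $e$; but since the tangential derivative of $\tr_e u$ at $x_j$ equals $\partial_i u(x_j)$ up to the (nonzero, fixed) edge length factor, only the tangential part contributes to the edge-trace argument, and the normal components of the gradient at vertices are irrelevant to the restriction to $e$. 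I do not expect the argument to require any property of $P$ beyond those already guaranteed by $(P,\square_n,D_1)$ being a unisolvent finite element with the edge-trace structure \eqref{dof-formal-feec}.
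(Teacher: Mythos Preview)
Your proposal is correct and follows the same edge-by-edge reduction that the paper uses, but you supply considerably more detail than the paper does. The paper's own proof is two sentences: it observes that $\mathscr{T}$ only alters edge DoFs (treating the new vertex-derivative and vector-evaluation DoFs as still edge-associated, per the discussion after \eqref{dof-vec-eval}), and then simply asserts that the resulting set of functionals remains linearly independent on each edge, so unisolvence is preserved. Your bubble-factorization argument is exactly what one would write to \emph{verify} that assertion concretely; the paper leaves that verification implicit.

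One small point: your ``secondary obstacle'' about $\partial_i$ versus the tangential derivative is a non-issue. In the paper's convention $i$ indexes the direction of an edge incident to $x$, so $\partial_i u(x)$ \emph{is} the tangential derivative along that edge; there is no normal component to worry about. Your main obstacle (dimension bookkeeping for which two moments are dropped) is real in the abstract, but in the concrete Hermite and Adini families the retained edge moments are explicitly against $\mathcal{P}_{r-4}(e)$, and the pairing $\int_e b_e^2\, w\, q$ for $w,q\in\mathcal{P}_{r-4}(e)$ is nondegenerate, so your factorization goes through cleanly.
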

\begin{proof}
%The only changes are the edge DoFs (including possible vertex DoFs) for the 0- and 1-forms. 
The operation $\mathscr{T}$ only changes the type of some edge DoFs, from type \eqref{dof-formal-feec} to type \eqref{dof-part-deriv} or \eqref{dof-vec-eval}, but preserves association of DoFs to edges, as discussed after \eqref{dof-vec-eval}.
The resulting set of DoFs is still linearly independent on each edge, so unisolvence of the element is preserved by the operation.
\end{proof}

\begin{lemma}\label{lem:DoF-exactness}
The DoF-transfer operation preserves exactness, i.e.\ if $(P^{\bs},\square_n,D^{\bs})$ defines an exact sequence of finite elements, then $\mathscr{T}(P^{\bs},\square_n,D^{\bs})$ is also exact.
\end{lemma}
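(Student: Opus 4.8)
The plan is to leverage Lemma \ref{lem:DoF-unisolvence} together with the dimension count and the commutativity already established in Lemma \ref{lem:commuting}. Since $\mathscr{T}$ leaves the local shape function spaces $P^{\bs}$ untouched ($P_1=P_2$), and changes only the \emph{type} of certain edge DoFs while preserving their association to edges, the assembled global spaces before and after $\mathscr{T}$ have the same dimension on each element and — crucially — the same inter-element continuity pattern induced by the DoFs. Hence the global finite element spaces $\mathscr{T}(P^i,\square_n,D^i)$ have the same dimension as the original global spaces $P^i$. First I would record this: the alternating sum $\sum_i(-1)^i\dim V_i$ is unchanged by $\mathscr{T}$, so the necessary dimension condition for exactness continues to hold.

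Next, I would argue that the differential operators still map the transferred spaces into one another, i.e.\ that $\mathscr{T}(P^{\bs},\square_n,D^{\bs})$ is still a subcomplex of the ambient Sobolev de Rham complex. This is immediate: the underlying piecewise-polynomial spaces and the operators $\curl$, $\div$ (or $\grad$, $\curl$, $\div$ in 3D) are literally the same as in $(P^{\bs},\square_n,D^{\bs})$; what changes is only the bookkeeping of which functionals we call DoFs. Since $(P^{\bs},\square_n,D^{\bs})$ was an exact sequence of \emph{conforming} finite element spaces sitting inside the Sobolev de Rham complex, and the transferred spaces consist of exactly the same global functions (the reinterpreted DoFs still being single-valued, by Lemma \ref{lem:DoF-unisolvence} and the edge-association property), we have $d^2=0$ and $\ker d_i\supseteq \Ima d_{i-1}$ for free.

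It then remains to upgrade the inclusion $\ker d_i\supseteq \Ima d_{i-1}$ to equality. Here I would combine two facts. First, injectivity at the left end and surjectivity at the right end are inherited: $\mathbb{R}\hookrightarrow P^0$ is unchanged, and the last map onto the $L^2$-type space is the same surjection as before (the top-degree space is not affected by $\mathscr{T}$, which only touches edge DoFs with $\dim\mathcal I_e\ge 2$). Second, with injectivity and surjectivity at the ends, and $d^2=0$ throughout, the exactness of the full sequence is equivalent to the single dimension identity $\sum_i(-1)^i\dim V_i=0$ \emph{together with} exactness of the original complex — more precisely, since the transferred and original global spaces have equal dimensions in every slot and equal images of the $\mathbb{R}$ and $L^2$ ends, a rank–nullity computation forces $\dim\ker d_i=\dim\Ima d_{i-1}$ at each interior slot, and combined with the inclusion this gives equality. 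I would phrase this cleanly as: exactness of a complex of finite-dimensional spaces satisfying $d^2=0$ is determined by the sequence of dimensions plus the dimensions of the images of the two boundary maps, all of which are preserved by $\mathscr{T}$.

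The main obstacle, and the point deserving the most care, is the claim that the \emph{global} dimension (equivalently, the inter-element continuity) is genuinely unchanged when we reinterpret two edge DoFs per edge as a vertex directional-derivative DoF \eqref{dof-part-deriv} or a vertex vector-evaluation DoF \eqref{dof-vec-eval}. One must check that requiring these new functionals to be single-valued across an element boundary cuts out the same global subspace of the broken space as requiring the original edge-moment functionals to be single-valued; this is where the geometric fact that a shared edge is incident to shared vertices, and that the transferred functionals only involve data along that edge (a vertex on it and its tangent direction), is used. Once that local-to-global bookkeeping is pinned down — essentially the content already invoked in Lemma \ref{lem:commuting} — the rest is the routine homological-algebra argument sketched above.
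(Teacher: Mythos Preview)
Your proposal rests on a claim that is false: the global spaces before and after $\mathscr{T}$ are \emph{not} the same. The DoF-transfer moves two edge DoFs per edge to vertex DoFs (types \eqref{dof-part-deriv} and \eqref{dof-vec-eval}), and in the global assembly these are genuine vertex functionals, shared by all elements meeting at that vertex rather than just the two elements sharing the edge. This imposes strictly stronger inter-element continuity ($C^{1}$ at vertices for $0$-forms, $C^{0}$ at vertices for $1$-forms), and hence $\mathscr{T}(P^{k},\square_{n},D^{k})$ is a \emph{proper} subspace of the original global space for $k=0,1$ on any nontrivial mesh. The remark after \eqref{dof-vec-eval} that ``we continue to treat these as edge DoFs'' is only a bookkeeping device for the \emph{local} unisolvence count in Lemma~\ref{lem:DoF-unisolvence}; it does not say the global continuity pattern is unchanged. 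Consequently your rank--nullity argument (``the transferred and original global spaces have equal dimensions in every slot'') collapses, and Lemma~\ref{lem:commuting} provides no rescue --- it is a statement about commutativity of $\mathscr{T}$ and $\mathscr{Q}$, not about preservation of global dimension.

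The paper's argument instead exploits the subspace inclusion directly. Exactness at indices $k\geq 2$ (in 3D) is immediate because $\mathscr{T}$ does not touch $2$- and $3$-forms. For $k=1$, one takes $u\in\mathscr{T}(P^{1})$ with $du=0$; since $\mathscr{T}(P^{1})\subset P^{1}$, exactness of the original complex yields a potential $\phi\in P^{0}$ with $\grad\phi=u$. The extra vertex continuity of $u$ then forces $\phi$ to be $C^{1}$ at vertices, so $\phi\in\mathscr{T}(P^{0})$. Exactness at $k=2$ follows from an alternating-dimension count: the global dimensions of the $0$-form and $1$-form spaces change by the \emph{same} amount under $\mathscr{T}$ (each gains $n$ vertex DoFs per vertex and loses $2$ DoFs per edge), so $\sum_{k}(-1)^{k}\dim V_{k}$ is preserved even though the individual dimensions are not. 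The missing idea in your attempt is precisely this regularity-lifting step at $k=1$; once the spaces are genuinely different, one needs a mechanism to pull the potential back into the smaller space, and the vertex-continuity argument supplies it.
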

\begin{proof}
Since the DoF-transfer only changes some 0- and 1-forms, the exactness at $\mathscr{T}(P^{k}, \square_n,D^{k})$, $k\geq 3$, holds due to the exactness of the original sequence $(P^{\bs},\square_n,D^{\bs})$. For the exactness at $k=1$, we observe that if $u\in \mathscr{T}(P^{1}, \square_n,D^{1})$ and $du=0$, then there exists $\phi\in (P^{0}, \square_n,D^{0})$ such that $\grad \phi=u$, by the exactness of  $(P^{\bs},\square_n,D^{\bs})$  at index 1 and the fact that $\mathscr{T}(P^{\bs},\square_n,D^{\bs})$ are subspaces of $(P^{\bs},\square_n,D^{\bs})$. Since $u$ is $C^{0}$ at vertices, we conclude that $\phi$ is correspondingly $C^{1}$ at vertices. This implies that $\phi\in \mathscr{T}(P^{0}, \square_n,D^{0})$ and shows the exactness of  $\mathscr{T}(P^{\bs},\square_n,D^{\bs})$ at index 1. It only remains to check the exactness at index 2. This follows by a dimension count.  In $n$ dimensions, on each element, $n$ DoFs are added per vertex and $n$ DoFs are removed per edge for $0$-forms and likewise for $1$-forms.
Thus, the alternating sum of dimension counts of spaces in the sequence is unaffected, meaning exactness is preserved.
Therefore the exactness of the whole sequence $\mathscr{T}(P^{\bs},\square_n,D^{\bs})$ holds.
\end{proof}

\begin{remark}
Starting with 2-forms, the DoF-transferred sequence branches into the standard finite element sequence. A direct check of the exactness at the space of two forms is not trivial. For example, the 2D 1-forms ($H(\div)$ element) with vertex continuity and the 2-forms of piecewise polynomials have been analyzed in Stenberg \cite{Stenberg2010} by a macroelement technique. From the point of view presented in this paper and \cite{christiansen2016nodal}, the exactness seems more clear and natural.
\end{remark}

In subsequent discussions, we use the term ``bubble function'' to mean a shape function on $\square_n$ for which all DoFs vanish on all $\ell$-dimensional facets of $\square_n$ for $0\leq\ell\leq n-1$.

\begin{lemma}\label{lem:serendipity-bubbles}
The DoF-transfer operation does not change the local space of bubble functions on facets of dimension $\ell\geq 2$.
\end{lemma}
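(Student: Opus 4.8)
The plan is to argue directly from the definition of the DoF-transfer operation $\mathscr{T}$ together with the definition of a bubble function. Recall that a bubble function on $\square_n$ supported on a facet $f$ of dimension $\ell$ is a shape function all of whose DoFs vanish on every facet of dimension $\ell' \le \ell - 1$, so that the bubble space on $f$ is exactly the kernel of the DoFs associated with the boundary $\partial f$, intersected with the shape function space $P$. Since $\mathscr{T}$ satisfies $P_1 = P_2$ (it is the identity on shape function spaces), the only thing that could change the bubble space is a change in which DoFs are ``charged'' to $\partial f$.

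First I would recall that $\mathscr{T}$ only alters DoFs associated to edges $e \prec \square_n$ with $\dim \mathcal{I}_e \ge 2$, and that it preserves the association of each altered DoF to its edge: on each such edge two DoFs of type \eqref{dof-formal-feec} are replaced by two DoFs of type \eqref{dof-part-deriv} or \eqref{dof-vec-eval}, as discussed after \eqref{dof-vec-eval}. In particular, the set of facets carrying DoFs is unchanged, and for every edge $e$ the \emph{span} of the (new) edge DoFs has the same annihilator in $P$ as the span of the old edge DoFs: an old edge DoF $u \mapsto \int_e (\operatorname{tr}_e u)\wedge q$ forces, together with the vertex evaluations at the two endpoints of $e$, the vanishing of $\operatorname{tr}_e u$; the new DoFs $\partial_i u(x)$ or $\bm v(x)$ together with the remaining edge integrals force exactly the same condition, since on the one-dimensional edge $e$ the replacement is a change of basis of the functionals vanishing on polynomials of the appropriate degree. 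This is the key step and I expect it to be essentially a bookkeeping verification rather than a genuine obstacle.

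Now fix a facet $f$ with $\dim f = \ell \ge 2$. The bubble space on $f$ before and after $\mathscr{T}$ is the subspace of $P|_f$ annihilated by all DoFs associated to proper subfacets of $f$; these subfacets have dimension $\le \ell - 1$, hence include the edges contained in $\partial f$ and the vertices of $f$. By the previous paragraph, for each edge $e \subset \partial f$ the set $\{u : \text{all DoFs on } e \text{ and its endpoints vanish}\}$ is unchanged by $\mathscr{T}$, and all DoFs on subfacets of dimension $\ge 2$ are untouched by $\mathscr{T}$ by definition. Therefore the total collection of constraints cutting out the bubble space on $f$ has the same common kernel in $P|_f = P_2|_f = P_1|_f$, i.e.\ the bubble space on $f$ is unchanged. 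Since $f$ was an arbitrary facet of dimension $\ell \ge 2$, this proves the claim.

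The one point requiring a little care — and the place I would spell out in full detail in the actual proof — is the claim that replacing two edge-integral DoFs by two endpoint-derivative (or endpoint-value) DoFs does not change the common kernel of the edge's full DoF set restricted to $P$. For $0$-forms this uses that $\operatorname{tr}_e u$ is a univariate polynomial of fixed degree, so prescribing its value and the relevant derivative at both endpoints plus the remaining interior moments is an equivalent unisolvent description; for $1$-forms the analogous statement is for the scalar component $\operatorname{tr}_e \bm v$ along $e$. Everything else is immediate from the definitions, so I do not anticipate a substantive obstacle.
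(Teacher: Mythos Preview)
Your argument is correct, but it takes a different route from the paper's own proof. The paper argues in two steps: first, since the DoF-transferred element has \emph{stronger} inter-element continuity than the original, any bubble for the transferred element is automatically a bubble for the original (so one has the inclusion $\text{transferred bubbles}\subseteq\text{original bubbles}$); second, since $\mathscr{T}$ leaves the interior DoFs on facets of dimension $\ell\ge 2$ untouched, unisolvence forces the two bubble spaces to have the same dimension, and the inclusion is therefore an equality. By contrast, you argue directly that on each edge $e$ the combined set \{vertex DoFs at endpoints of $e$\}\,$\cup$\,\{edge DoFs on $e$\} has the \emph{same common kernel} in $P$ before and after the transfer (namely, $\operatorname{tr}_e u=0$), and since DoFs on facets of dimension $\ge 2$ are untouched, the full boundary constraint set cutting out the bubble space on any $f$ with $\dim f\ge 2$ is unchanged. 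Your approach is more explicit and avoids the dimension count, at the cost of spelling out the edge-level equivalence that the paper leaves implicit in the phrase ``stronger continuity''; the paper's approach is shorter but leans on unisolvence to close the argument. Both are perfectly valid, and your identification of the one step requiring care (that the replacement of two edge integrals by two endpoint derivative/value DoFs is a change of basis for the functionals determining $\operatorname{tr}_e u$) is exactly right.
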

\begin{proof}
Since the DoF-transferred sequence $\mathscr{T}(P^{\bs},\square_n,D^{\bs})$ has stronger inter-element continuity than $(P^{\bs},\square_n,D^{\bs})$, we conclude that the bubble spaces of  $\mathscr{T}(P^{\bs},\square_n,D^{\bs})$ are contained in those of $(P^{\bs},\square_n,D^{\bs})$.  
On the other hand, the DoF-transfer operation does not change the interior DoFs associated to facets with dimension $\ell\geq 2$.  
So, the spaces of bubble functions of $\mathscr{T}(P^{\bs},\square_n,D^{\bs})$ and $(P^{\bs},\square_n,D^{\bs})$ have the same dimension.
Therefore, the spaces of bubble functions for these two families are the same. 
\end{proof}

We now examine properties of the serendipity operation.
First, we address the issue of unisolvence.
Degrees of freedom of type \eqref{dof-formal-feec} are unisolvent if and only if on each $k$ dimensional face $f\in \square_{m}, k\leq  m\leq n$, the trace space with vanishing boundary conditions on $f$ (the space of bubble functions on $f$) coincides with  the local space $\mathcal{I}_{f}$ used in the definition of degrees of freedom in \eqref{dof-formal-feec}, i.e., $\ast\tr_{f} (P^{k}\cap H_{0}\Lambda^{k})=\mathcal{I}_{f}$. Here, the Hodge star is used since in the language of differential forms, the inner product is defined by $(u, v)_{f}:=\displaystyle\int_{f}\ast u\wedge v\, dx$. In terms of vector proxies, the test space $\mathcal{I}_{f}$ is just the same as the space of bubbles on $f$. We assume that the serendipity operation only affects degrees of freedom of type \eqref{dof-formal-feec}.  If the image of a the serendipity operation $\mathscr{Q}:(P_1,\square_n,D_1)\rightarrow (P_2,\square_n,D_2)$ satisfies the corresponding relation $\ast\tr_{f}\left( P_{2}^{\bs}\cap H_{0}\Lambda^{\bs}\right)= \mathcal{I}_{f, 2}$, then unisolvence will be preserved.

  For exactness, consider the diagram \eqref{diagram:long}:
\begin{align}\label{diagram:long}
\begin{diagram}
&  & & &0  & & 0&  &0 &  & 0  & & \\
&  & &  & \dTo^{}  & &\dTo^{}  &  &\dTo&  & \dTo  &  & \\
&  &0 & \rTo & \ker(\mathscr{Q}_0)  & \rTo^{d} & \ker(\mathscr{Q}_1)  & \rTo^{d} &\cdots  & \rTo^{d} & \ker(\mathscr{Q}_n)  & \rTo & 0\\
&  & &  & \dTo^{\iota}  & &\dTo^{\iota}  &  &\dTo^{\iota}&  & \dTo^{\iota}  &  & \\
 0&\rTo  &\mathbb{R} & \rTo & V_{0}  & \rTo^{d} &V_{1} & \rTo^{d} & \cdots & \rTo^{d} & V_{n}  & \rTo & 0\\
&  &  &  & \dTo^{\mathscr{Q}_0}  & &\dTo^{\mathscr{Q}_1}  &  &\dTo^{\mathscr{Q}_2}&  & \dTo^{\mathscr{Q}_3}  &  & \\
 0&\rTo  &\mathbb{R} & \rTo & \tilde{V}_{0}  & \rTo^{d} &\tilde{V}_{1} & \rTo^{d} & \cdots & \rTo^{d} & \tilde{V}_{n}  & \rTo & 0\\
&  &  &  & \dTo  & &\dTo  &  &\dTo&  & \dTo  &  & \\
  &  &   & &0  & & 0&  &0 &  & 0  & & 
\end{diagram}
\end{align}

The first row of \eqref{diagram:long} consists of the kernel spaces of the serendipity operation, indicating the subspaces that have been eliminated. 
The second row is the finite element sequence that we start with and the last row is the reduced sequence.
By definition, each column 
\begin{align}
\begin{diagram}
0 & \rTo & \ker(\mathscr{Q}_k)  & \rTo^{\iota} & V_k & \rTo^{\mathscr{Q}_k} & \tilde V_k & \rTo^{} & 0,
\end{diagram}
\end{align}
is a short exact sequence, where $\iota$ is the inclusion map. 
 Assume that the row in the middle, i.e.
\begin{align}\label{seq:original}
\begin{diagram}
 0&\rTo  &\mathbb{R} & \rTo & V_{0}  & \rTo^{d} &V_{1} & \rTo^{d} & \cdots & \rTo^{d} & V_{n}  & \rTo & 0,
\end{diagram}
\end{align}
is exact. From a general algebraic result, the exactness of either the first or last row of \eqref{diagram:long} implies exactness of the other (c.f.\ \cite{dummit2004abstract} for the general result and \cite[Proposition 5.16, Proposition 5.17]{christiansen2011topics} for an application in the context of finite element systems).
In particular, the exactness of the kernel sequence
\begin{align}\label{seq:kernel}
\begin{diagram}
0 & \rTo & \ker(\mathscr{Q}_0)  & \rTo^{d} & \ker(\mathscr{Q}_1)  & \rTo^{d} &\cdots  & \rTo^{d} & \ker(\mathscr{Q}_n)  & \rTo & 0,
\end{diagram}
\end{align}
 implies that of the reduced sequence 
\begin{align}\label{seq:reduced}
\begin{diagram}
 0&\rTo  &\mathbb{R} & \rTo & \tilde{V}_{0}  & \rTo^{d} &\tilde{V}_{1} & \rTo^{d} & \cdots & \rTo^{d} & \tilde{V}_{n}  & \rTo & 0,
\end{diagram}
\end{align}
and vice versa. 
The following lemma summarizes this result.
\begin{lemma}\label{lem:serendipity-exact}
In any setting of the form \eqref{diagram:long}, if any two rows of the diagram are exact then the third is exact as well.
In particular, exactness of the first and second rows implies that the serendipity operation preserves exactness.
\end{lemma}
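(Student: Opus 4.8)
\emph{Proof plan.} The plan is to read the columns of \eqref{diagram:long} as a short exact sequence of cochain complexes and then apply the long exact sequence in cohomology that it induces. First I would observe that, by commutativity of \eqref{diagram:long}, the inclusions $\iota$ and the projections $\mathscr{Q}_{k}$ are cochain maps; writing $K^{\bullet}$, $V^{\bullet}$, $\widetilde V^{\bullet}$ for the top, middle and bottom rows (each viewed as a complex with differential $d$), this produces
\[
0 \longrightarrow K^{\bullet} \xrightarrow{\ \iota\ } V^{\bullet} \xrightarrow{\ \mathscr{Q}\ } \widetilde V^{\bullet} \longrightarrow 0 .
\]
The only delicate point is the leading copy of $\mathbb{R}$: constants belong to every shape-function space and $\mathscr{Q}_{0}$ is a projection onto a subspace containing them, so $\mathscr{Q}_{0}$ fixes constants, whence $\mathscr{Q}_{0}(\mathbb{R})=\mathbb{R}$ and $\mathbb{R}\cap\ker(\mathscr{Q}_{0})=0$. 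Consequently the augmented columns are still short exact, and ``a row of \eqref{diagram:long} is exact'' is equivalent to ``every cohomology group of the corresponding complex vanishes.''

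Next I would invoke the standard zig-zag (snake) lemma for a short exact sequence of complexes to obtain the long exact sequence
\[
\cdots \longrightarrow H^{k}(K^{\bullet}) \longrightarrow H^{k}(V^{\bullet}) \longrightarrow H^{k}(\widetilde V^{\bullet}) \xrightarrow{\ \delta\ } H^{k+1}(K^{\bullet}) \longrightarrow \cdots .
\]
If any two of the three complexes are acyclic, then for each $k$ two of the groups $H^{k}(K^{\bullet})$, $H^{k}(V^{\bullet})$, $H^{k}(\widetilde V^{\bullet})$ vanish, and exactness of the displayed sequence forces the remaining one to vanish as well, since it sits between two zero groups. Hence the third complex is acyclic, i.e.\ the third row of \eqref{diagram:long} is exact; specializing to the case where the first and second rows are exact yields that the serendipity operation preserves exactness, which is the asserted special case.

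I do not expect a genuine obstacle in this lemma itself: it is pure homological algebra, and the argument above is precisely the general fact already quoted in the text, so one may equivalently just cite \cite{dummit2004abstract} and \cite[Proposition 5.16, Proposition 5.17]{christiansen2011topics}. The only step needing care is the bookkeeping at the left end of the rows --- reconciling the leading $\mathbb{R}$ in the middle and bottom rows with the leading $0$ in the top row --- which is dispatched by the observation that $\mathscr{Q}_{0}$ acts as the identity on constants; the substantive work, namely verifying the hypothesis (exactness of the kernel sequence \eqref{seq:kernel}) for the concrete serendipity operations defined later, is carried out separately in the following sections.
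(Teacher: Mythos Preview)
Your proposal is correct and is precisely the standard argument behind the references the paper cites; in fact the paper does not give a proof of this lemma at all but simply records it as a summary of the preceding discussion, which in turn appeals to \cite{dummit2004abstract} and \cite[Propositions 5.16--5.17]{christiansen2011topics}. Your write-up supplies the details (short exact sequence of complexes, long exact sequence in cohomology, two-out-of-three), including the careful bookkeeping with the leading $\mathbb{R}$, and you yourself note that a citation would suffice --- so there is no divergence in approach.
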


\begin{remark}
From the perspective of finite element systems, the serendipity operation reduces local (bubble) spaces on various dimensions and the DoF-transfer only changes the spaces carried by zero dimensional cells (vertex spaces) and the trace/restriction from one dimensional cells to their boundaries.  Although a rigorous theory has not been studied in the literature, we observe that the structures of the FES are not changed for two and higher dimensions. The DoF-transfer defined in this section changes the  system associated with one dimensional cells from Lagrange to Hermite type. This partly explains the fact that face and interior bubble functions are not affected by the DoF-transfer. 
\end{remark}

%%%%%%%%%%%%%%%%%%%%%%%%%%%%%%%%%%%%%%%%%%%%%%%%%%%%%%%%%%%%%%%%%%

\section{Hermite, Adini and trimmed-Adini complexes}\label{sec:elements}

In this section we introduce the Hermite, Adini and trimmed-Adini complexes obtained from standard complexes by the serendipity and DoF-transfer operations. We describe the function spaces and degrees of freedom definitions in two and three space dimensions. 

\subsection{Hermite complex}
\subsubsection{Two dimensions}

The 2D Hermite sequence is denoted by
\begin{align}\label{2d-hermite-sequence}
\begin{diagram}
0& \rTo & \mathbb{R} & \rTo & \mathcal{G}^{-}_{r}\Lambda^{0}(\mathcal{T}_{h}) & \rTo^{\curl} &\mathcal{G}^{-}_{r}\Lambda^{1}(\mathcal{T}_{h}) & \rTo^{\div} &\mathcal{G}^{-}_{r}\Lambda^{2}(\mathcal{T}_{h})& \rTo^{} & 0.
\end{diagram}
\end{align}
The shape function spaces coincide with those of the tensor product family on squares, i.e.\ $\mathcal{Q}^-\Lambda^k$, or Raviart-Thomas elements.
However, the Hermite family is only defined when $r\geq 3$, as smaller $r$ values do not provide enough degrees of freedom to ensure the requisite continuity at vertices for $k=0$ and $k=1$.  The lowest order case is shown in Figure \ref{fig:hermite2d}.

\begin{center}
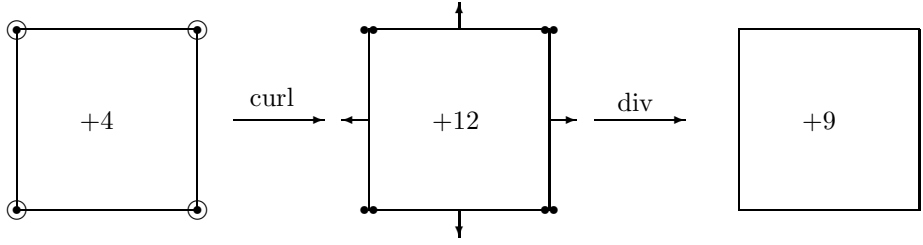
\begin{figure}
\setlength{\unitlength}{1.2cm}
\begin{picture}(2,2)(-2,0)
\put(0,0){
\begin{picture}(2,2)
\put(-1, 0){\line(1,0){2}} 
\put(1, 2){\line(-1,0){2}}
\put(-1,0){\line(0,1){2}}
\put(1, 2){\line(0, -1){2}}
\put(-1,0){\circle*{0.1}}
\put(-1,0){\circle{0.2}}
\put(1.,0){\circle*{0.1}}
\put(1.,0){\circle{0.2}}
\put(-1,2){\circle*{0.1}}
\put(-1,2){\circle{0.2}}
\put(1,2){\circle*{0.1}}
\put(1,2){\circle{0.2}}
%\put(-0.2, 0.8){\circle*{0.1}}
%\put(0.2, 0.8){\circle*{0.1}}
%\put(-0.2, 1.2){\circle*{0.1}}
%\put(0.2, 1.2){\circle*{0.1}}
\put(-0.3, 0.9){+4}
\end{picture}
}

\put(1.5, 1){\vector(1, 0){1}}
\put(1.68, 1.15){$\curl$}

\put(4,0){
\put(-1, 0){\line(1,0){2}} 
\put(1, 2){\line(-1,0){2}}
\put(-1,0){\line(0,1){2}}
\put(1, 2){\line(0, -1){2}}
\put(-0.95,0){\circle*{0.1}}
\put(-1.05,0){\circle*{0.1}}
\put(1.05,0){\circle*{0.1}}
\put(0.95,0){\circle*{0.1}}
\put(-0.95,2){\circle*{0.1}}
\put(-1.05,2){\circle*{0.1}}
\put(1.05,2){\circle*{0.1}}
\put(0.95,2){\circle*{0.1}}
\put(0, 2){\vector(0,1){0.3}}
\put(0, 0){\vector(0,-1){0.3}}
\put(-1, 1){\vector(-1,0){0.3}}
\put(1, 1){\vector(1,0){0.3}}
%\put(1, 1){\circle*{0.1}}
\put(-0.3, 0.9){+12}
%\put(0.2, 1){\circle*{0.1}}
}

\put(5.5, 1){\vector(1, 0){1}}
\put(5.75, 1.1){{$\div$}}

\put(8,0){
\begin{picture}(2,2)
\put(-1, 0){\line(1,0){2}} 
\put(1, 2){\line(-1,0){2}}
\put(-1,0){\line(0,1){2}}
\put(1, 2){\line(0, -1){2}}
\put(-0.3, 0.9){+9}
\end{picture}
}

%\put(9, 1){\vector(1, 0){1}}
\end{picture}
\caption{The lowest order Hermite complex, with regularity $H(\curl)\rightarrow {H}(\div)\rightarrow L^{2}$. The local shape function spaces are $\mathcal{Q}_{3}\rightarrow \mathcal{P}_{3, 2}\times \mathcal{P}_{2, 3}\rightarrow \mathcal{Q}_{2}$, or, equivalently, $\mathcal{Q}_{3}\Lambda^0(\square_2)\rightarrow \mathcal{Q}_{3}\Lambda^1(\square_2)\rightarrow \mathcal{Q}_{3}\Lambda^2(\square_2)$. }
\label{fig:hermite2d}
\end{figure}
\end{center}

\paragraph{Space $\mathcal{G}^{-}_{r}\Lambda^{0}(\square_2)$.}
The shape function space of $\mathcal{G}^{-}_{r}\Lambda^{0}(\square_2)$ is $\mathcal{Q}_{r}$. The DoFs can be given by
\begin{itemize}
\item
function evaluation and first order derivatives at each vertex $x\in \mathcal{V}$:
$$
u(x), \quad \partial_{i}u(x), ~i=1, 2, 
$$
\item
moments on each edge $e\in \mathcal{E}$:
$$
\int_{e} u q\ dx, \quad q\in \mathcal{P}_{r-4}(e),
$$
\item
interior DoFs in each $K\in \mathcal{F}$:
$$
\int_{K}up\ dx, \quad p\in \mathcal{Q}_{r-2}(K).
$$
\end{itemize}
%The above degrees of freedom are unisolvent.

\paragraph{Space $\mathcal{G}^{-}_{r}\Lambda^{1}(\square_2)$.} The shape function space of $\mathcal{G}^{-}_{r}\Lambda^{1}(\square_2)$ is $\mathcal{P}_{r, r-1}\times \mathcal{P}_{r-1, r}$, the same as $\mathcal{Q}^-_r\Lambda^1(\square_2)$.  The DoFs can be given by
\begin{itemize}
\item
function evaluation at each vertex $x\in \mathcal{V}$:
$$
\bm{v}(x), 
$$
\item
moments of the normal components on each edge $e\in \mathcal{E}$:
$$
\int_{e}\bm{v}\cdot\bm{\nu}_{e}q\ dx, \quad q\in \mathcal{P}_{r-3}(e),
$$
\item
interior degrees of freedom in each $K\in\mathcal{F}$:
$$
\int_{K}\bm{v}\cdot\bm{\psi}\ dx, \quad \bm{\psi}\in \mathcal{P}_{r-1, r-2}(K)\times \mathcal{P}_{r-2, r-1}(K).
$$
\end{itemize}

\paragraph{Space $\mathcal{G}^{-}_{r}\Lambda^{2}(\square_2)$.} The space $\mathcal{G}^{-}_{r}\Lambda^{2}(\square_2)$ consists of  piecewise $\mathcal{Q}_{r-1}$ polynomials on each element.

\subsubsection{Three dimensions}

The 3D Hermite sequence is denoted by
\begin{align}\label{3dcomplex-hermite}
\begin{diagram}
0& \rTo &\mathbb{R} & \rTo &  \mathcal{G}^{-}_{r}\Lambda^{0}(\mathcal{T}_{h}) & \rTo^{\grad} & \mathcal{G}^{-}_{r}\Lambda^{1}(\mathcal{T}_{h}) & \rTo^{\curl} &\mathcal{G}^{-}_{r}\Lambda^{2}(\mathcal{T}_{h}) & \rTo^{\div} &\mathcal{G}^{-}_{r}\Lambda^{3}(\mathcal{T}_{h})& \rTo^{} & 0.
\end{diagram}
\end{align}

\paragraph{Space $\mathcal{G}^{-}_{r}\Lambda^{0}(\square_3)$.}
The shape function space of $\mathcal{G}^{-}_{r}\Lambda^{0}(\square_3)$ is  $\mathcal{Q}_{r}$. The DoFs can be given by
\begin{itemize}
\item
function evaluation and first order derivatives at each vertex $x\in \mathcal{V}$:
$$
u(x), \quad \partial_{i}u(x), ~i=1, 2,
$$
\item
moments on each edge $\forall e\in \mathcal{E}$:
$$
\int_{e} u q\ dx, \quad q\in \mathcal{P}_{r-4}(e), 
$$
\item
moments on each face $f\in \mathcal{F}$:
$$
\int_{f} u p\ dx, \quad p\in \mathcal{Q}_{r-2}(f), 
$$
\item
interior DoFs in each $K\in \mathcal{K}$:
$$
\int_{K}uw\ dx, \quad w\in \mathcal{Q}_{r-2}(K).
$$
\end{itemize}
%By  similar argument as the Lagrange type $\mathcal{Q}_{r}$ element, we can see that the above degrees of freedom are unisolvent.

\paragraph{Space $\mathcal{G}^{-}_{r}\Lambda^{1}(\square_3)$.} The discrete $H(\curl)$ space of the 3D Hermite complex has the shape function space  $\mathcal{P}_{r, r+1, r+1}\times \mathcal{P}_{r+1, r, r+1}\times \mathcal{P}_{r+1, r+1, r}$.  DoFs for a function $\bm{v}\in \mathcal{G}^{-}_{r}\Lambda^{1}(\square_3)$ can be given by 
\begin{itemize}
\item
function evaluation at each vertex $x\in \mathcal{V}$:
$$
\bm{v}(x),
$$
\item
moments of the tangential components on each edge $e\in \mathcal{E}$:
$$
\int_{e}\bm{v}\cdot\bm{\tau}_{e}q\ dx, \quad q\in \mathcal{P}_{r-3}(e),
$$
\item
moments of the tangential components on each face $f\in \mathcal{F}$:
$$
\int_{f}\left (\bm{v}\times \bm{\nu}_{f}\right )\cdot\bm{p}\ dx, \quad \bm{p}\in \mathcal{P}_{r-1, r-2}(f)\times \mathcal{P}_{r-2, r-1}(f),
$$
where $\bm{v}\times \bm{\nu}_{f}$ is treated as a 2-vector in the plane of the face,
\item
interior DoFs in each $K\in \mathcal{K}$:
$$
\int_{K}\bm{v}\cdot\bm{s}\ dx, \quad \bm{s}\in \mathcal{P}_{r-1, r-2, r-2}(K)\times \mathcal{P}_{r-2, r-1, r-2}(K)\times \mathcal{P}_{r-2, r-2, r-1}(K).
$$
\end{itemize}

\paragraph{Spaces $\mathcal{G}^{-}_{r}\Lambda^{2}(\square_3)$ and $\mathcal{G}^{-}_{r}\Lambda^{3}(\square_3)$.} The last two spaces in the sequence coincide with the standard Raviart-Thomas element of degree $r$ and the piecewise polynomial tensor product element $\mathcal{Q}_{r-1}$, i.e.\ $\mathcal{Q}^-_r\Lambda^2(\square_3)$ and $\mathcal{Q}^-_r\Lambda^3(\square_3)$.
The DoFs of $\mathcal{G}^{-}_{r}\Lambda^{2}(\square_3)$,  can be given by (c.f. \cite{Boffi.D;Brezzi.F;Fortin.M.2013a})
\begin{itemize}
\item
moments on each face $f\in \mathcal{F}$:
$$
\int_{f}\bm{u}\cdot\bm{\nu}_{f}q\ dx, \quad q\in \mathcal{Q}_{r-1}(f),
$$
\item
interior DoFs for each $K\in \mathcal{K}$:
$$
\int_{K}\bm{u}\cdot \bm{s}\ dx, \quad \bm{s}\in \mathcal{P}_{r-2, r-1 ,r-1}(K)\times \mathcal{P}_{r-1, r-2, r-1}(K)\times \mathcal{P}_{r-1,  r-1, r-2}(K).
$$
\end{itemize}

%%%%%%%%%%%%%%%%%%
\subsection{Adini complexes}

The Adini element for the plate problem~\cite{adini} can be regarded as a serendipity version of the cubical Hermite element. 
The shape function space of the 2D Adini element in the lowest order reads
$$
\mathcal{Q}_{1}+\mathrm{span}\left \{ x_{i}^{2}q~:~ 1\leq i\leq 2, ~q\in \mathcal{Q}_{1} \right \},
$$
which coincides with the shape function space of the serendipity element $\mathcal{S}_3\Lambda^0(\square_2)$. 
The degrees of freedom in this case can be given as the function evaluation and the first order derivatives at each vertex $x\in \mathcal{V}$.
%$$
%u(x), \quad \partial_{i}u(x), ~i=1, 2, \cdots, d.
%$$
We now explain how this definition can be expanded to an entire exact seqeuence of finite elements.

\subsubsection{Two dimensions}
The 2D Adini sequence is denoted by
\begin{align}\label{2d-adini-complex}
\begin{diagram}
0&\rTo&\mathbb{R} & \rTo & \mathcal{A}_{r+2}\Lambda^{0}(\mathcal{T}_{h}) & \rTo^{\curl} &\mathcal{A}_{r+1}\Lambda^{1}(\mathcal{T}_{h}) & \rTo^{\div} &\mathcal{A}_{r}\Lambda^{2}(\mathcal{T}_{h})& \rTo^{} & 0,  
\end{diagram}
\end{align}

\begin{center}
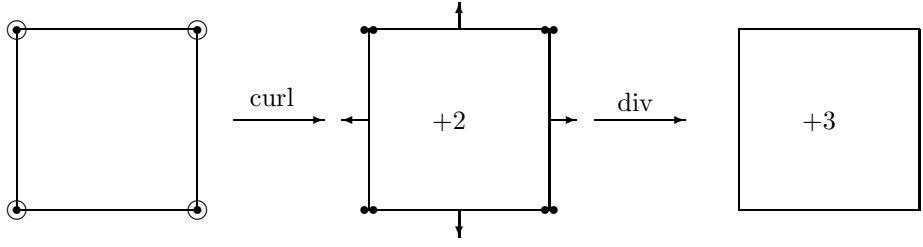
\begin{figure}

\setlength{\unitlength}{1.2cm}
\begin{picture}(2,2)(-2,0)
\put(0,0){
\begin{picture}(2,2)
\put(-1, 0){\line(1,0){2}} 
\put(1, 2){\line(-1,0){2}}
\put(-1,0){\line(0,1){2}}
\put(1, 2){\line(0, -1){2}}
\put(-1,0){\circle*{0.1}}
\put(-1,0){\circle{0.2}}
\put(1.,0){\circle*{0.1}}
\put(1.,0){\circle{0.2}}
\put(-1,2){\circle*{0.1}}
\put(-1,2){\circle{0.2}}
\put(1,2){\circle*{0.1}}
\put(1,2){\circle{0.2}}
\end{picture}
}

\put(1.5, 1){\vector(1, 0){1}}
\put(1.68, 1.15){$\curl$}

\put(4,0){
\put(-1, 0){\line(1,0){2}} 
\put(1, 2){\line(-1,0){2}}
\put(-1,0){\line(0,1){2}}
\put(1, 2){\line(0, -1){2}}
\put(-0.95,0){\circle*{0.1}}
\put(-1.05,0){\circle*{0.1}}
\put(1.05,0){\circle*{0.1}}
\put(0.95,0){\circle*{0.1}}
\put(-0.95,2){\circle*{0.1}}
\put(-1.05,2){\circle*{0.1}}
\put(1.05,2){\circle*{0.1}}
\put(0.95,2){\circle*{0.1}}
\put(0, 2){\vector(0,1){0.3}}
\put(0, 0){\vector(0,-1){0.3}}
\put(-1, 1){\vector(-1,0){0.3}}
\put(1, 1){\vector(1,0){0.3}}
%\put(1, 1){\circle*{0.1}}
%\put(-0.2, 1){\circle*{0.1}}
%\put(0.2, 1){\circle*{0.1}}
\put(-0.3, 0.9){+2}
}

\put(5.5, 1){\vector(1, 0){1}}
\put(5.75, 1.1){{$\div$}}

\put(8,0){
\begin{picture}(2,2)
\put(-1, 0){\line(1,0){2}} 
\put(1, 2){\line(-1,0){2}}
\put(-1,0){\line(0,1){2}}
\put(1, 2){\line(0, -1){2}}
%\put(0, 1){\circle*{0.1}}
%\put(-0.1, 0.8){\circle*{0.1}}
%\put(0.1, 0.8){\circle*{0.1}}
\put(-0.3, 0.9){+3}
\end{picture}
}

%\put(9, 1){\vector(1, 0){1}}
\end{picture}
\caption{The lowest order Adini complex, with regularity $H(\curl)\rightarrow {H}(\div)\rightarrow L^{2}$. The local shape function spaces are $\mathcal{S}_{3}\Lambda^{0}(\square_2)\rightarrow \mathcal{S}_{2}\Lambda^{1}(\square_2) \rightarrow  \mathcal{S}_{1}\Lambda^{2}(\square_2)$.}
\label{fig:adini2d}
\end{figure}
\end{center}

The space $\mathcal{A}_{r+2}\Lambda^{0}(\square_2)$ represents the Adini element of degree $r+2$ and $\mathcal{A}_{r}\Lambda^{2}(\square_2)=\mathcal{S}_r\Lambda^2(\square_2)=\mathcal{P}_{r}$, the space of piecewise polynomials of total degree at most $r$.
The lowest order case of the sequence is shown in Figure \ref{fig:adini2d}.

\paragraph{Space $\mathcal{A}_{r}\Lambda^{0}(\square_2)$.}
The shape function space is $\mathcal{S}_{r}\Lambda^{0}(\square_2)=\mathcal{P}_{r}+\mathrm{span}\left\{x^ry, xy^r\right\}$.
The DoFs are given by 
\begin{itemize}
\item
function evaluation and first order derivatives at each vertex $x\in \mathcal{V}$:
$$
u(x), \quad \partial_{i}u(x), ~i=1, 2, 
$$
\item
moments on each edge $e\in \mathcal{E}$:
$$
\int_{e} u q\ dx, \quad q\in \mathcal{P}_{r-4}(e),
$$
\item
interior DoFs in each $K\in \mathcal{F}$:
$$
\int_{K}up\ dx, \quad p\in \mathcal{P}_{r-4}(K).
$$
\end{itemize}

\paragraph{Space $\mathcal{A}_{r}\Lambda^{1}(\square_2)$.}
 The shape function space is
\begin{align}\label{space-BDM2D}
\mathcal{S}_{r}\Lambda^{1}(\square_2)=\mathcal{P}_{r}+\mathrm{span}\left\{\curl \left(x^{r+1}y\right ), \curl\left (xy^{r+1}\right )\right\},
\end{align}
which coincides with the BDM element of degree $r$. The degrees of freedom can be given by
\begin{itemize}
\item
function evaluation at each vertex $x\in \mathcal{V}$:
$$
\bm{u}_{i}(x),\quad i=1, 2,
$$
\item
moments on each edge $e\in \mathcal{E}$:
$$
\int_{e}\bm{u}\cdot\bm{\nu}_{e}q\ dx, \quad q\in \mathcal{P}_{r-2}(e),
$$
\item
interior DoFs in each $K\in \mathcal{K}$:
$$
\int_{K}\bm{u}\cdot\bm{p}\ dx, \quad \bm{p}\in \left [\mathcal{P}_{r-2}(K)\right ] ^2.
$$
\end{itemize}

\subsubsection{Three dimensions}

The 3D Adini sequence is denoted by
\begin{align}\label{3D-adini-complex}
\begin{diagram}
0& \rTo &\mathbb{R} & \rTo &  \mathcal{A}_{r+3}\Lambda^{0}(\mathcal{T}_{h}) & \rTo^{\grad} & \mathcal{A}_{r+2}\Lambda^{1}(\mathcal{T}_{h}) & \rTo^{\curl} &\mathcal{A}_{r+1}\Lambda^{2}(\mathcal{T}_{h}) & \rTo^{\div} &\mathcal{A}_{r}\Lambda^{3}(\mathcal{T}_{h})& \rTo^{} & 0.
\end{diagram}
\end{align}
%We explain the spaces below and show the global exactness of \eqref{3D-adini-complex}.

\paragraph{Space $\mathcal{A}_{r}\Lambda^{0}(\square_3)$.} The shape function space is the same as that of the serendipity element $\mathcal{S}_{r}\Lambda^{0}$, i.e. polynomials with superlinear degree at most $r$. For $r=3$,  the shape function space can be  equivalently represented as 
$$Q_{1}+\mathrm{span}\left \{x_{i}^{2}q~:~ 1\leq i\leq 3, ~q\in \mathcal{Q}_{1}\right \},$$ 
and $\mathcal{A}_{r}\Lambda^{0}(\mathcal{T}_{h})$ coincides with the 3D Adini element.   The degrees of freedom can be given by
\begin{itemize}
\item
function evaluation and first order derivatives at each vertex $x\in \mathcal{V}$:
$$
u(x), \quad \partial_{i}u(x), ~~i=1, 2, 3,
$$
\item
moments on each edge $e\in \mathcal{E}$:
$$
\int_{e}uq\ dx, \quad q\in \mathcal{P}_{r-4}(e), 
$$
\item
moments on each face $f\in \mathcal{F}$:
$$
\int_{f}uv\ dx, \quad v\in \mathcal{P}_{r-4}(f),
$$
\item 
interior DoFs in each $K\in \mathcal{K}$:
$$
\int_{K}uw\ dx, \quad w\in \mathcal{P}_{r-6}(K).
$$
\end{itemize}

\paragraph{Space $\mathcal{A}_{r}\Lambda^{1}(\square_3)$.}

The shape function space coincides with that of the $H(\curl)$ finite element given in \cite{arnold2014finite}, i.e.
$$
\mathcal{S}_{r}\Lambda^{1}(K):=\left [\mathcal{P}_{r}(K)\right ]^{3}+\mathrm{span}\left (
\left ( 
\begin{array}{c}
yz\left (w_{2}(x, z)-w_{3}(x, y)\right )\\
zx\left ( w_{3}(x, y)-w_{1}(y, z)\right )\\
xy\left ( w_{1}(y, z)-w_{2}( x, z)\right )
\end{array}
\right )
+\grad q(x, y, z)
 \right ),
$$
where $w_{i}\in \mathcal{P}_{r}(K)$ and $q$ is a polynomial on $K$ with superlinear degree at most $k+1$.
The DoFs for a vector function $\bm{u}$ can be given by
\begin{itemize}
\item
function evaluation at each vertex $x\in \mathcal{V}$: % 24 DoFs / element
$$
\bm{u}(x),
$$
\item
moments on each edge $e\in \mathcal{E}$: % 12(r-1) DoFs / element
$$
\int_{e}\bm{u}\cdot\bm{\tau}_{e}q\ dx, \quad q\in \mathcal{P}_{r-2}(e),
$$
\item
moments on each face $f\in \mathcal{F}$:
$$
\int_{f}\left (\bm{u}\times \bm{\nu}_{f}\right )\cdot \bm{p}\ dx, \quad \bm{p}\in \left[\mathcal{P}_{r-2}(f)\right]^2,
$$
where $\bm{u}\times \bm{\nu}_{f}$ is considered as a 2D vector,
\item
interior DoFs in $K\in \mathcal{K}$:
$$
\int_{K}\bm{u}\cdot\bm{s}\ dx, \quad \bm{s}\in \left [\mathcal{P}_{r-4}(K)\right ]^{3}.
$$
\end{itemize}

The last two spaces in the sequence are the same as the BDM type $H(\div)$ element and the piecewise polynomial space in \cite{arnold2014finite}. For completeness, we include the definitions of these spaces.
\paragraph{Space $\mathcal{A}_{r}\Lambda^{2}(\square_3)$.} The shape function space of the cubical BDM space is given by
$$
\mathrm{BDM}_{r}:=\left [\mathcal{P}_{r}(K)\right ]^{3}+\mathrm{span}\left \{  
\left (
\begin{array}{c}
yz\left (w_{2}(x, z)-w_{3}(x, y)\right )\\
zx\left (w_{3}(x, y)-w_{1}(y, z)\right )\\
yz\left (w_{1}(y, z)-w_{2}(x, z)\right )
\end{array}
\right )
\right\}.
$$
The DoFs for a vector function $\bm{v}$ can be taken as
\begin{itemize}
\item
moments on each face $f\in \mathcal{F}$:
$$
\int_{f}\bm{v}\cdot\bm{\nu}_{f}q\ dx,\quad q\in \mathcal{P}_{r}(f),
$$
\item
interior DoFs in $K\in \mathcal{K}$:
$$
\int_{K}\bm{v}\cdot \bm{p}\ dx, \quad \bm{p}\in \left [\mathcal{P}_{r-2}(K)\right ]^{3}.
$$
\end{itemize}

\paragraph{Space $\mathcal{A}_{r}\Lambda^{3}(\square_3)$.} Piecewise polynomials of degree $r$ in 3 variables.

\subsection{Trimmed-Adini complex}

The Adini element can also be treated as the first element in a different finite element sequence, which we call the trimmed-Adini complex. 
The shape function spaces are those of the trimmed serendipity family, defined in~\cite{GK2016} as
$$
\mathcal{S}_{r}^{-}\Lambda^{k}:=\mathcal{S}_{r-1}\Lambda^{k}+\kappa \mathcal{S}_{r-1}\Lambda^{k+1}.
$$
The operator $\kappa$ is the Koszul operator, which is discussed in detail in a finite element context in~\cite{Arnold.D;Falk.R;Winther.R.2006a}.
As shown in~\cite{GK2016}, the trimmed serendipity spaces ``nest'' in between the regular serendipity spaces via the inclusions $\mathcal{S}_{r}\Lambda^{k}\subset \mathcal{S}_{r+1}^{-}\Lambda^{k}\subset \mathcal{S}_{r+1}\Lambda^{k}$ and satisfy identities at form order 0 and $n$, given by $\mathcal{S}_r^-\Lambda^0=\mathcal{S}_r\Lambda^0$ and $\mathcal{S}_r^-\Lambda^n=\mathcal{S}_{r-1}\Lambda^n$.
The associated complexes in 2D and 3D are stated in \eqref{2d-fe-seqs} and \eqref{3d-fe-seqs}.
Figure \ref{fig:trimmed-adini2d} shows the lowest order trimmed-Adini complex in two dimensions.

%The complex
%$$
%\begin{diagram}
%0 & \rTo & \mathbb{R} & \rTo & \mathcal{S}_{r}^{-}\Lambda^{0}&\rTo^{d} & \mathcal{S}_{r}^{-}\Lambda^{1}&\rTo^{d} &\cdots& \rTo^{d} & \mathcal{S}_{r}^{-}\Lambda^{n-1} & \rTo^{d} &  \mathcal{S}_{r}^{-}\Lambda^{n}& \rTo&0,
%\end{diagram}
%$$
%is exact in contractible domains. Here the indices do not decrease as the $\mathcal{P}_{r}^{-}\Lambda^{k}$ family on simplicial mesh \cite{Arnold.D;Falk.R;Winther.R.2006a} and the tensor product elements $\mathcal{Q}_{r}^{-}\Lambda^{k}$. 

\begin{center}
\begin{figure}

\setlength{\unitlength}{1.2cm}
\begin{picture}(2,2)(-2,0)
\put(0,0){
\begin{picture}(2,2)
\put(-1, 0){\line(1,0){2}} 
\put(1, 2){\line(-1,0){2}}
\put(-1,0){\line(0,1){2}}
\put(1, 2){\line(0, -1){2}}
\put(-1,0){\circle*{0.1}}
\put(-1,0){\circle{0.2}}
\put(1.,0){\circle*{0.1}}
\put(1.,0){\circle{0.2}}
\put(-1,2){\circle*{0.1}}
\put(-1,2){\circle{0.2}}
\put(1,2){\circle*{0.1}}
\put(1,2){\circle{0.2}}
\end{picture}
}

\put(1.5, 1){\vector(1, 0){1}}
\put(1.68, 1.15){$\curl$}

\put(4,0){
\put(-1, 0){\line(1,0){2}} 
\put(1, 2){\line(-1,0){2}}
\put(-1,0){\line(0,1){2}}
\put(1, 2){\line(0, -1){2}}
\put(-0.95,0){\circle*{0.1}}
\put(-1.05,0){\circle*{0.1}}
\put(1.05,0){\circle*{0.1}}
\put(0.95,0){\circle*{0.1}}
\put(-0.95,2){\circle*{0.1}}
\put(-1.05,2){\circle*{0.1}}
\put(1.05,2){\circle*{0.1}}
\put(0.95,2){\circle*{0.1}}
\put(0, 2){\vector(0,1){0.3}}
\put(0, 0){\vector(0,-1){0.3}}
\put(-1, 1){\vector(-1,0){0.3}}
\put(1, 1){\vector(1,0){0.3}}
%\put(1, 1){\circle*{0.1}}
%\put(-0.2, 1){\circle*{0.1}}
%\put(0.2, 1){\circle*{0.1}}
\put(-0.3, 0.9){+5}
}

\put(5.5, 1){\vector(1, 0){1}}
\put(5.75, 1.1){{$\div$}}

\put(8,0){
\begin{picture}(2,2)
\put(-1, 0){\line(1,0){2}} 
\put(1, 2){\line(-1,0){2}}
\put(-1,0){\line(0,1){2}}
\put(1, 2){\line(0, -1){2}}
%\put(0, 1){\circle*{0.1}}
%\put(-0.1, 0.8){\circle*{0.1}}
%\put(0.1, 0.8){\circle*{0.1}}
\put(-0.3, 0.9){+6}
\end{picture}
}

%\put(9, 1){\vector(1, 0){1}}
\end{picture}
\caption{The lowest order trimmed-Adini complex, with regularity $H(\curl)\rightarrow {H}(\div)\rightarrow L^{2}$. The local shape function spaces are $\mathcal{S}_{3}^{-}\Lambda^{0}\rightarrow \mathcal{S}_{3}^{-}\Lambda^{1} \rightarrow \mathcal{S}_{3}^{-}\Lambda^{2}$. }
\label{fig:trimmed-adini2d}

\end{figure}
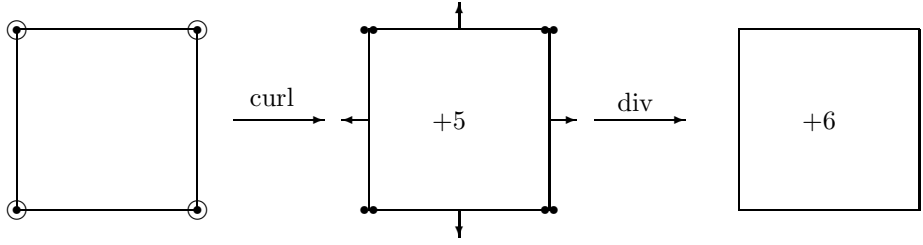
\end{center}

\subsubsection{Two dimensions}

The 2D trimmed Adini sequence is denoted by
\begin{align}\label{2d-trim-adini-complex}
\begin{diagram}
0 & \rTo & \mathbb{R} & \rTo & \mathcal{A}_{r}^{-}\Lambda^{0}(\mathcal{T}_{h}) & \rTo^{\curl} &\mathcal{A}_{r}^{-}\Lambda^{1}(\mathcal{T}_{h}) & \rTo^{\div} &\mathcal{A}_{r}^{-}\Lambda^{2}(\mathcal{T}_{h})& \rTo^{} & 0.
\end{diagram}
\end{align}
Given the identities on the serendipity and trimmed serendipity spaces for $0$-forms and $n$-forms just mentioned, the sequence could also be written as $\mathcal{A}_{r}\Lambda^{0}\rightarrow\mathcal{A}_{r}^{-}\Lambda^{1}\rightarrow\mathcal{A}_{r-1}\Lambda^{2}$.
Thus, only the space $\mathcal{A}^{-}_{r}\Lambda^{1}(\square_2)$ is distinct from a 2D Adini-type element already defined.

\paragraph{Space $\mathcal{A}^{-}_{r}\Lambda^{1}(\square_2)$.}
 The degrees of freedom can be given by
\begin{itemize}
\item
function evaluation at each vertex $x\in \mathcal{V}$:
$$
\bm{u}_{i}(x),\quad i=1, 2,
$$
\item
moments on each edge $e\in \mathcal{E}$:
$$
\int_{e}\bm{u}\cdot\bm{n}_{e}q\ dx, \quad q\in \mathcal{P}_{r-3}(e),
$$
\item
interior DoFs in $K\in \mathcal{K}$:
$$
\int_{K}\bm{u}\cdot\bm{p}\ dx, \quad \bm{p}\in \left [\mathcal{P}_{r-3}(K)\right ]^{2}\oplus \curl \mathcal{H}_{r-1}\Lambda^{0}(K),
$$
where $\mathcal{H}_{r-1}\Lambda^{0}(K)$ denotes the space of (scalar) homogeneous polynomials of degree $r-1$ on $K$. 
\end{itemize}

%Figure \ref{fig:trimmed-adini2d} shows the lowest order case.

\subsubsection{Three dimensions}

The 3D trimmed Adini complex is denoted by
\begin{align}\label{3D-adini-complex}
\begin{diagram}
0& \rTo &\mathbb{R} & \rTo &  \mathcal{A}_{r}^{-}\Lambda^{0}(\mathcal{T}_{h}) & \rTo^{\grad} & \mathcal{A}_{r}^{-}\Lambda^{1}(\mathcal{T}_{h}) & \rTo^{\curl} &\mathcal{A}_{r}^{-}\Lambda^{2}(\mathcal{T}_{h}) & \rTo^{\div} &\mathcal{A}_{r}^{-}\Lambda^{3}(\mathcal{T}_{h})& \rTo^{} & 0.
\end{diagram}
\end{align}
Using the identities for 0-forms and $n$-forms, we can re-write this sequence as
\begin{align}\label{3D-adini-complex-alt}
\begin{diagram}
0& \rTo &\mathbb{R} & \rTo &  \mathcal{A}_{r}\Lambda^{0}(\mathcal{T}_{h}) & \rTo^{\grad} & \mathcal{A}_{r}^{-}\Lambda^{1}(\mathcal{T}_{h}) & \rTo^{\curl} &\mathcal{A}_{r}^{-}\Lambda^{2}(\mathcal{T}_{h}) & \rTo^{\div} &\mathcal{A}_{r-1}\Lambda^{3}(\mathcal{T}_{h})& \rTo^{} & 0.
\end{diagram}
\end{align}
Thus, we will only describe the 1-form and 2-form spaces in 3D.

\paragraph{Space $\mathcal{A}^{-}_{r}\Lambda^{1}(\square_3)$.}
%The shape function space is $\mathcal{S}_r^-\Lambda^1(\square_3)$.
The DoFs for a vector valued function $\bm{u}$ can be given by
\begin{itemize}
\item
value at each vertex $x\in \mathcal{V}$:
$$
\bm{u}(x),
$$
\item
moments on each edge $e\in \mathcal{E}$:
$$
\int_{e}\bm{u}\cdot\bm{\tau}_{e}q\ dx, \quad q\in \mathcal{P}_{r-3}(e),
$$
\item
moments on each face $f\in \mathcal{F}$:
$$
\int_{f}\left (\bm{u}\times \bm{\nu}_{f}\right )\cdot \bm{p}\ dx, \quad p\in \left [\mathcal{P}_{r-3}(f)\right ]^{2}\oplus \grad\mathcal{H}_{r-1}\Lambda^{0}(f),
$$
where $\bm{u}\times \bm{\nu}_{f}$ is considered as a 2D vector,
\item
interior DoFs in $K\in \mathcal{K}$:
$$
\int_{K}\bm{u}\cdot\bm{s}\ dx, \quad \bm{s}\in \left [\mathcal{P}_{r-5}(K)\right ]^{3}\oplus \curl\mathcal{H}_{r-3}\Lambda^{1}(K).
$$
\end{itemize}

\paragraph{Space $\mathcal{A}^{-}_{r}\Lambda^{2}(\square_3)$.}

The DoFs for a vector function $\bm{v}$ can be taken as

\begin{itemize}
\item
moments on each face $f\in \mathcal{F}$:
$$
\int_{f}\bm{v}\cdot\bm{\nu}_{f}q\ dx,\quad q\in \mathcal{P}_{r-1}(f),
$$
\item
interior DoFs in $K\in \mathcal{K}$:
$$
\int_{K}\bm{v}\cdot \bm{p}\ dx, \quad \bm{p}\in \left [\mathcal{P}_{r-3}(K)\right ]^{3} \oplus\grad\mathcal{H}_{r-1}\Lambda^0(K).
$$
\end{itemize}

\subsection{Unisolvence and exactness}

%\textcolor{red}{This next part can be moved to later in the paper; just putting some text in here for now.}

The unisolvence of the elements and the exactness of the various complexes introduced in this section follow from the properties of the serendipity and the DoF-transfer operations shown in Lemma \ref{lem:DoF-unisolvence} -- Lemma \ref{lem:serendipity-exact}. 
For instance, since the Hermite and Adini complexes are the images of the $\mathscr{T}$ applied to the tensor product and serendipity complexes, respectively (recall the diagram in \eqref{3d-diagram-serendipity}), and since $\mathscr{T}$ preserves unisolvence by Lemma \ref{lem:DoF-unisolvence} and exactness by Lemma \ref{lem:DoF-exactness}, it follows that the Hermite and Adini complexes are are unisolvent and exact.

It is possible to prove exacness without applying the DoF-transfer to the $\mathcal{S}_{\bs}\Lambda^{\bs}$ sequence. 
Consider the diagram shown in \eqref{3d-diagram-serendipity2}, which does not assume any relation between $\mathcal{S}_{\bs}\Lambda^{\bs}$ and $\mathcal{A}_{\bs}\Lambda^{\bs}$.
\begin{equation}\label{3d-diagram-serendipity2}
\begin{tikzcd}
& \ker(\mathscr{D}_{0}, \mathcal{Q})\arrow[dl, "="] \arrow[rr, "\curl"] \arrow[dd, "\iota"]& & \ker(\mathscr{D}_{1},\mathcal{Q})\arrow[dl, "="]\arrow[rr, "\div"]  \arrow[dd, "\iota"] & & \ker(\mathscr{D}_{2},\mathcal{Q})\arrow[dl, "="]\arrow[dd, "\iota"] \\
 \ker(\mathscr{D}_{0}, \mathcal{G}) \arrow[rr, crossing over, "\curl"] \arrow[dd, crossing over, "\iota"] & & \ker(\mathscr{D}_{0}, \mathcal{G})  \arrow[rr, crossing over, "\div"] \arrow[dd, crossing over, "\iota"]  & &  \ker(\mathscr{D}_{0}, \mathcal{G}) \arrow[dd, crossing over, "\iota"] \\
& \mathcal{Q}_{r+2}^-\Lambda^{0} \arrow[dl, "\mathscr{T}"] \arrow[rr, "\curl"] \arrow[dd, "\mathscr{Q}"]& & \mathcal{Q}_{r+2}^-\Lambda^{1} \arrow[dl, "\mathscr{T}"]\arrow[rr, "\div"]  \arrow[dd, "\mathscr{Q}"] & & \mathcal{Q}_{r+1}^-\Lambda^{2} \arrow[dl, "\mathscr{T}"]\arrow[dd, "\mathscr{Q}"] \\
 \mathcal{G}^{-}_{r+2}\Lambda^{0} \arrow[rr, crossing over, "\curl"] \arrow[dd, crossing over, "\mathscr{Q}"] & & \mathcal{G}^{-}_{r+2}\Lambda^{1}   \arrow[rr, crossing over, "\div"]  & &  \mathcal{G}^{-}_{r+1}\Lambda^{2} \\
& \mathcal{S}_{r+2}\Lambda^{0}  \arrow[rr] &  & \mathcal{S}_{r+1}\Lambda^{1}   \arrow[rr]  & & \mathcal{S}_{r}\Lambda^{2}  \\
%& \mathcal{S}_{r+2}\Lambda^{0} \arrow[dl, "\mathscr{T}"] \arrow[rr] &  & \mathcal{S}_{r+1}\Lambda^{1} \arrow[dl, "\mathscr{T}"]  \arrow[rr]  & & \mathcal{S}_{r}\Lambda^{2} \arrow[dl, "\mathscr{T}"] \\
 \mathcal{A}_{r+2}\Lambda^{0} \arrow[rr, "\curl"] & &  \mathcal{A}_{r+1}\Lambda^{1}  \arrow[uu, leftarrow, crossing over, "\mathscr{Q}"]  \arrow[rr, "\div"]  & &  \mathcal{A}_{r}\Lambda^{2} \arrow[uu, leftarrow, crossing over, "\mathscr{Q}"]\\
\end{tikzcd}
\end{equation}
By Lemma \ref{lem:DoF-unisolvence} and Lemma \ref{lem:DoF-exactness},  the Hermite family $\mathcal{G}^{-}_{\bs}\Lambda^{\bs}$ is unisolvent and exact. Since the serendipity family $\mathcal{S}_{\bs}\Lambda^{\bs}$ is known to be exact on contractible domains~\cite{arnold2014finite}, we know that the kernel sequence 
$$
\begin{diagram}
0 & \rTo &  \ker(\mathscr{D}_{0}, \mathcal{Q})& \rTo^{\curl} &  \ker(\mathscr{D}_{1}, \mathcal{Q})& \rTo^{\div} &  \ker(\mathscr{D}_{2}, \mathcal{Q}) & \rTo& 0,
\end{diagram}
$$
is exact by Lemma \ref{lem:serendipity-exact}. Since the DoF-transfer does not change bubble spaces (by Lemma \ref{lem:serendipity-bubbles}) the kernel sequences are in fact equal, i.e.\ $\ker (\mathscr{D}_{\bs}, \mathcal{Q})=\ker (\mathscr{D}_{\bs}, \mathcal{G})$.  Using Lemma \ref{lem:serendipity-exact} again, we conclucde the Adini sequence $\mathcal{A}_{\bs}\Lambda^{\bs}$ is also exact. 

\section{Approximation and convergence properties}\label{sec:convergence}

The DoF-transfer and serendipity operations aid in establishing approximation and convergence properties. The DoF-transfer operation ensures a discrete Korn inequality and  non-conforming  convergence. The serendipity operation preserves these properties since it is a local space reduction that does not change inter-element continuity.

%%%%%%%%%%%%%%%%%%%%%%%%%%%%%%%%%%%%%%
\subsection{A discrete Korn inequality}\label{sec:korn}

The Korn inequality is an indispensable tool in linear elasticity, asserting that if a vector field is orthogonal to the rigid body motion, then its $H^{1}$ norm can be controlled by its symmetric gradient. Specifically, the Korn inequality reads: if $u$ and $v$ are vector valued functions in a domain $\Omega$, then there exists a positive constant $C>0$ such that
$$
\|u\|_{1}^{2}\leq C\left ( \left \|\epsilon(u)  \right \|^{2}+\|u\|^{2}\right ), \quad\forall u\in \left [H^1(\Omega) \right ]^{n},
$$ 
$$
\|v\|_{1}^{2}\leq C\left (\|\epsilon(v)\|^{2}+\sum_{i=1}^{n}\int_{\partial\Omega}\left |v_{i}\right |\, ds\right ), \quad \forall v\in \left [H^1(\Omega) \right ]^{n},
$$
and
$$
\|v\|_{1}^{2}\leq C\|\epsilon(v)\|^{2}, \quad \forall v\in \left [H^1_{0}(\Omega) \right ]^{n}. 
$$
Here $\epsilon$ is the symmetric gradient mapping a vector to a symmetric matrix, defined by $[\epsilon(u)]_{ij}:=1/2 (\partial_{i}u_{j}+\partial_{j}u_{i})$.

For  non-conforming finite element spaces, a discrete version of Korn's inequalities is desirable. In this case, the derivatives appearing in the original Korn's inequality are replaced by the piecewise derivatives and the broken Sobolev norms
$$
\|u\|_{m, h}:=\left (\sum_{T\in \mathcal{T}_{h}}\|u\|_{m, T}^{2} \right )^{1/2}.
$$

We first recall the results in \cite{ming1994generalized}. Following the general results, we will see that the DoF-transfer operation leads to finite elements satisfying the discrete Korn inequality. 
The following two conditions for a finite dimensional space $V_{h}$ are crucial for the discrete Korn inequality in \cite{ming1994generalized}.
\begin{itemize}
\item[(H1)]  There exists an integer $r\geq 1$ such that for any $v\in V_{h}$, $\left . v\right |_{T}\in \mathcal{P}_{r}(T), ~\forall T\in \mathcal{T}_{h}$.
\item[(H2)] For  any $v\in V_{h}$, $T\in \mathcal{T}_{h}$, let $F$ be an arbitrary $(n-1)$ dimensional face of $T$.  Then $v$ is continuous on a set consisting of at least $n$ points that are not on a common $n-2$ dimensional hyperplane, and the set is affine invariant.
\end{itemize}

\begin{theorem}[\cite{ming1994generalized}]\label{thm:korn}
Assume that the assumptions (H1) and (H2) are true. Then there exists a constant $C$ independent of the mesh size $h$, such that the following discrete Korn inequalities hold:
$$
\|v\|_{1, h}^{2}\leq C\left ( \sum_{T\in \mathcal{T}_{h}}\|\epsilon(v)\|_{0, T}^{2}+\|v\|_{0, \Omega}^{2}\right ), \quad \forall v\in V_{h},
$$
$$
\|v\|_{1, h}^{2}\leq C\left ( \sum_{T\in \mathcal{T}_{h}}\|\epsilon(v)\|_{0, T}^{2}+\sum_{i=1}^{n}\int_{\partial\Omega}\left |v_{i}\right |\, ds\right ), \quad \forall v\in V_{h},
$$
and
$$
\|v\|_{1, h}^{2}\leq C\sum_{T\in \mathcal{T}_{h}}\|\epsilon(v)\|_{0, T}^{2}, \quad \forall v\in V_{h}^{0},
$$
where $V_{h}^{0}$ is the subspace of $V_{h}$ with vanishing degrees of freedom on the boundary.
\end{theorem}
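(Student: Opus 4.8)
The plan is to reproduce the argument of \cite{ming1994generalized}; I sketch the main steps. First I would establish the elementwise identity that follows from the pointwise relation $2\,|\epsilon(v)|^2 = |\nabla v|^2 + \nabla v\!:\!(\nabla v)^{\top}$ together with two integrations by parts on each $T\in\mathcal{T}_h$, namely
$$\sum_{T\in\mathcal{T}_h}\int_T|\nabla v|^2\,dx \;=\; 2\sum_{T\in\mathcal{T}_h}\int_T|\epsilon(v)|^2\,dx \;-\;\sum_{T\in\mathcal{T}_h}\int_T(\div v)^2\,dx\;-\;J(v),\qquad J(v):=\sum_{T\in\mathcal{T}_h}\int_{\partial T}\!\big(v\cdot\partial_{\bm{\nu}}v-(v\cdot\bm{\nu})\,\div v\big)\,ds.$$
Since $-\sum_{T}\int_T(\div v)^2\,dx\le 0$, it then suffices to bound $|J(v)|$ by $\tfrac12\sum_T\|\nabla v\|_{0,T}^2$ plus a constant times the right-hand side of each asserted inequality; after that the first term is absorbed into the left-hand side and the stated estimates follow, invoking for the third one the discrete Friedrichs inequality on $V_h^{0}$ (whose boundary degrees of freedom vanish).

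Next I would estimate $J(v)$ by reassembling the boundary integrals face by face. On an interior face $F$ shared by elements $T^{+}$ and $T^{-}$, the two contributions combine, via $\bm{\nu}^{+}=-\bm{\nu}^{-}$, into an integral over $F$ of a bilinear expression in the jump $\llbracket v\rrbracket$, the average $\{\!\{v\}\!\}$, and the one-sided traces of $\nabla v$; the product rule $\llbracket ab\rrbracket=\{\!\{a\}\!\}\llbracket b\rrbracket+\llbracket a\rrbracket\{\!\{b\}\!\}$ and an integration by parts along $F$ let me rewrite it so that $\llbracket v\rrbracket$ appears only through terms of the form $\int_F\llbracket v\rrbracket\cdot g\,ds$ with $g$ built from $\nabla_h v$ and tangential derivatives of $v$ on $F$; on boundary faces the same manipulation produces the $\int_{\partial\Omega}|v_i|\,ds$ terms (and, for $V_h^{0}$, nothing extra). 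Here hypothesis (H1) lets me pull everything back to the reference element, where it lives in a fixed finite-dimensional polynomial space, and hypothesis (H2) — the affine-invariant set of $n$ points of $F$ at which $v$ is continuous, not lying on a common $(n-2)$-dimensional hyperplane — shows that $\llbracket v\rrbracket$ is annihilated by affine interpolation at those $n$ points, so a Bramble--Hilbert/scaling argument on the two-element reference patch controls $\|\llbracket v\rrbracket\|_{0,F}$ by $h_F\,\|\llbracket\nabla v\rrbracket\|_{0,F}$, hence by $h_F^{1/2}\|\nabla_h v\|_{0,\omega_F}$ after a trace and an inverse inequality on polynomials. Combining the face estimates with these scaling, trace and inverse inequalities and, for the lower-order pieces, Poincar\'e's inequality, should deliver $|J(v)|\le \tfrac12\sum_T\|\nabla v\|_{0,T}^2 + C\big(\sum_T\|\epsilon(v)\|_{0,T}^2+\|v\|_{0,\Omega}^2\big)$, and likewise with the boundary-integral term in place of $\|v\|_{0,\Omega}^2$, or with no lower-order term on $V_h^{0}$.

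The hard part will be the face-jump estimate: a term-by-term bound using only $\|\llbracket v\rrbracket\|_{0,F}\lesssim h_F^{1/2}\|\nabla_h v\|_{0,\omega_F}$ is not sharp enough, because the pairing $\int_F\llbracket v\rrbracket\cdot\{\!\{\nabla v\}\!\}\,ds$ is then only bounded by a full-size multiple of $\|\nabla_h v\|^2$, which cannot be absorbed with a small constant. One must therefore exploit the precise algebraic structure of $J(v)$ — in particular the cancellations among the tangential-derivative terms revealed by integrating by parts along $F$ — together with the affine invariance in (H2); this is exactly the technical core of \cite{ming1994generalized}. Everything else (the integration-by-parts identities, the face-by-face reassembly, the scaling, trace and inverse inequalities, and the Friedrichs/Poincar\'e arguments for the lower-order terms) is routine.
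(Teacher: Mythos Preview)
The paper does not supply its own proof of this theorem: it is simply quoted from \cite{ming1994generalized} (note the citation in the theorem header) and used as a black box, so there is no ``paper's proof'' to compare against. Your proposal is therefore not a reconstruction of anything in the present paper but rather an attempt to reproduce the argument of the cited reference.

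As a sketch of the Wang--Ming argument your outline is broadly in the right spirit --- elementwise integration by parts to reduce the discrete Korn estimate to a jump term $J(v)$, then face-by-face control of the jumps using (H1) for finite-dimensionality/scaling and (H2) to kill the affine part of $\llbracket v\rrbracket$ across each face. You are also honest about the genuine difficulty: a crude bound on the jump term is not absorbable, and one must exploit the algebraic structure of the boundary integrand. That is indeed where the work lies in \cite{ming1994generalized}. For the purposes of \emph{this} paper, however, no such argument is needed; the authors invoke the result directly and then only check that their DoF-transferred spaces satisfy (H1) and (H2).
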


In some cases, the condition (H2) is a strong assumption. For example, generally the shape functions of the Crouzeix-Raviart  non-conforming  element \cite{brenner2015forty} are only continuous at $n-1$ points on a $n-1$ dimensional face. However, with the DoF-transfer operation, the resulting finite element functions are continuous at each vertex. On an $n-1$ dimensional face, this amounts to at least $n$ points. Therefore the assumption (H2) is fulfilled and we conclude the following result.
\begin{theorem}
The 0- and 1-forms in the Hermite, Adini and trimmed-Adini complexes satisfy the discrete Korn inequalities in Theorem \ref{thm:korn}.
\end{theorem}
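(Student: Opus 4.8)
The plan is to verify that the finite element spaces in question satisfy hypotheses (H1) and (H2) of Theorem~\ref{thm:korn}, so that the discrete Korn inequalities follow immediately. Hypothesis (H1) is essentially automatic: on each element $\square_n$, the shape functions of all the $0$- and $1$-form spaces in the Hermite, Adini and trimmed-Adini complexes are polynomial (respectively $\mathcal{Q}_r$, $\mathcal{S}_r\Lambda^k$, or $\mathcal{S}_r^-\Lambda^k$ subspaces), hence contained in $\mathcal{P}_r(\square_n)$ for a suitable fixed $r$; for $1$-forms we simply apply the scalar statement componentwise (or invoke the vector version of the inequality, which follows from the scalar one applied to each component). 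The key point is (H2).

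First I would recall that, by construction, the DoF-transfer operation $\mathscr{T}$ introduces for $0$-forms the pointwise value $u(x)$ together with all first-order partial derivatives $\partial_i u(x)$ at every vertex $x\in\mathcal{V}$, and for $1$-forms the full vector value $\bm{v}(x)$ at every vertex. Since these are genuine (single-valued) global degrees of freedom, any function in the assembled space $\mathcal{G}^-_r\Lambda^0$, $\mathcal{A}_r\Lambda^0$, $\mathcal{A}_r^-\Lambda^0$ (and likewise the $\Lambda^1$ versions) is continuous at every vertex of the mesh. Now fix $T\in\mathcal{T}_h$ and an $(n-1)$-dimensional face $F$ of $T$; in the cubical setting $F$ is an $(n-1)$-cube and has $2^{n-1}$ vertices. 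For $n=2$, $F$ is an edge with $2$ vertices; for $n=3$, $F$ is a square with $4$ vertices. In either case the number of vertices is at least $n$, and the vertices of an $(n-1)$-cube do not lie on a common $(n-2)$-dimensional hyperplane (they affinely span the $(n-1)$-dimensional face). The set of vertices of $F$ is determined geometrically and is preserved under affine maps, so it is affine invariant in the sense of (H2). Hence the restriction of any $v\in V_h$ to such a face is continuous at a set of at least $n$ points in general position, which is exactly (H2).

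With (H1) and (H2) established, Theorem~\ref{thm:korn} applies verbatim to $V_h$ equal to the $0$-form or $1$-form space of any of the three complexes, yielding all three discrete Korn inequalities; the third inequality uses $V_h^0$, the subspace with vanishing boundary DoFs, which for our elements consists of functions whose vertex values (and, for $0$-forms, vertex derivatives) on $\partial\Omega$ vanish together with the boundary edge/face moments. I would also remark that the serendipity operation $\mathscr{Q}$ does not affect this argument at all: by Lemma~\ref{lem:serendipity-bubbles} and the defining properties of $\mathscr{Q}$, it only removes interior bubble DoFs on facets of dimension $\geq 2$ and leaves all vertex and edge DoFs — in particular the vertex continuity — untouched, so (H1) and (H2) are inherited by the serendipity-reduced spaces. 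The main obstacle, such as it is, is purely bookkeeping: one must be careful that the vertex DoFs are truly global and single-valued (so that continuity at vertices genuinely holds for the assembled space, not just locally) and that the polynomial degree $r$ in (H1) is chosen uniformly over the family; neither of these presents a real difficulty given the explicit DoF descriptions in Section~\ref{sec:elements}.
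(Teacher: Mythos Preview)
Your proposal is correct and follows essentially the same approach as the paper: verify (H1) trivially from the polynomial shape functions, and verify (H2) by observing that the DoF-transfer operation forces continuity at all mesh vertices, so on each $(n-1)$-dimensional face the function is continuous at the $2^{n-1}\geq n$ vertices, which are in general position. Your write-up is more detailed (explicitly checking affine independence, noting that $\mathscr{Q}$ leaves vertex DoFs untouched), but the argument is the same one the paper gives in the paragraph immediately preceding the theorem.
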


%%%%%%%%%%%%%%%%%%%%%%%%%%%%%%%%%%%%%%
\subsection{Convergence as  non-conforming  elements}

The DoF-transfer operation ensures extra continuity at vertices compared to standard elements, namely, $C^{1}$ continuity for 0-forms and $C^0$ continuity for 1-forms. This order of continuity allows provable convergence results for problems requiring more regularity in the solution.  Specifically, the resulting 0-forms can be regarded as {convergent}  non-conforming  elements for the biharmonic equations \cite{adini,hu2016capacity} while the 1-forms are convergent  non-conforming  elements for Poisson type equations involving the scalar Laplacian operator.

First, recall the Strang lemma (c.f. \cite{brenner2007mathematical}).
\begin{lemma}[Strang Lemma]
Let $V$ and $V_{h}$ be Hilbert spaces and  $\dim V_{h}<\infty$. Let $a_{h}(\cdot, \cdot)$ be a symmetric positive-definite bilinear form on $V+V_{h}$ that reduces to $a(\cdot, \cdot)$ on $V$. Let $u\in V$ solve
$$
a(u, v)=F(v), \quad\forall v\in V, 
$$
where $F\in V^{\ast}\cap V_{h}^{\ast}$. Let $u_{h}\in V_{h}$ solve
$$
a_{h}(u_{h}, v)=F(v), \quad\forall v\in V_{h}.
$$
Then
\begin{align}\label{strang}
\left \|u-u_{h}\right \|_{h}\leq \inf_{v_{h}\in V_{h}}\|u-v_{h}\|_{h}+\sup_{w_{h}\in V_{h}\backslash \{0\}}\frac{\left |a_{h}(u-u_{h}, w_{h})\right |}{\|w_{h}\|_{h}}.
\end{align}
\end{lemma}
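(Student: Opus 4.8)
The plan is to read \eqref{strang} as a purely Hilbertian best-approximation estimate for the energy inner product. Its right-hand side depends only on $u$, $u_{h}$ and $a_{h}$; the hypotheses that $a_{h}$ restricts to $a$ on $V$ and that $u$ solves the continuous problem are not actually needed to derive \eqref{strang} itself, and enter only afterwards, when one rewrites $a_{h}(u-u_{h},w_{h})=a_{h}(u,w_{h})-F(w_{h})$ to display the last term as a computable consistency residual. I would fix the convention $\|w\|_{h}:=a_{h}(w,w)^{1/2}$ on $V+V_{h}$: since $a_{h}$ is symmetric and positive definite there, this is a genuine norm and the Cauchy--Schwarz inequality $|a_{h}(x,y)|\le\|x\|_{h}\|y\|_{h}$ holds on $V+V_{h}$ (if the $\|\cdot\|_{h}$ in the statement is only equivalent to this one, the argument still goes through at the cost of a fixed constant).

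Next I would introduce the $a_{h}$-orthogonal projection $\pi_{h}\colon V+V_{h}\to V_{h}$, defined by $a_{h}(\pi_{h}z,w_{h})=a_{h}(z,w_{h})$ for all $w_{h}\in V_{h}$; it is well defined because $\dim V_{h}<\infty$ and $a_{h}$ is positive definite on $V_{h}$. Two standard facts then do the work. First, $u-\pi_{h}u$ is $a_{h}$-orthogonal to $V_{h}$, so $\|u-v_{h}\|_{h}^{2}=\|u-\pi_{h}u\|_{h}^{2}+\|\pi_{h}u-v_{h}\|_{h}^{2}$ for every $v_{h}\in V_{h}$, whence $\|u-\pi_{h}u\|_{h}=\inf_{v_{h}\in V_{h}}\|u-v_{h}\|_{h}$. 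Second, set $g:=\pi_{h}u-u_{h}\in V_{h}$; combining the discrete equation $a_{h}(u_{h},w_{h})=F(w_{h})$ with the defining property of $\pi_{h}$ gives, for every $w_{h}\in V_{h}$,
\[
a_{h}(g,w_{h})=a_{h}(\pi_{h}u,w_{h})-a_{h}(u_{h},w_{h})=a_{h}(u,w_{h})-F(w_{h})=a_{h}(u-u_{h},w_{h}),
\]
and since $g\in V_{h}$, testing with $w_{h}=g$ yields $\|g\|_{h}=\sup_{w_{h}\in V_{h}\backslash\{0\}}|a_{h}(u-u_{h},w_{h})|/\|w_{h}\|_{h}$. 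Finally, writing $u-u_{h}=(u-\pi_{h}u)+g$ with the two summands $a_{h}$-orthogonal,
\[
\|u-u_{h}\|_{h}^{2}=\|u-\pi_{h}u\|_{h}^{2}+\|g\|_{h}^{2}\le\bigl(\|u-\pi_{h}u\|_{h}+\|g\|_{h}\bigr)^{2},
\]
which is exactly \eqref{strang}.

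I do not expect a genuine obstacle, since this is the classical second Strang lemma; the one point that deserves care is that, unlike the conforming C\'{e}a estimate, Galerkin orthogonality fails here --- $a_{h}(u-u_{h},w_{h})$ need not vanish for $w_{h}\in V_{h}$, precisely because $a_{h}$ need not agree with $a$ on arguments lying in $V_{h}\backslash V$ --- so the residual genuinely survives as the last term of \eqref{strang}. One could also avoid $\pi_{h}$ by a more elementary argument: for arbitrary $v_{h}\in V_{h}$, decompose $u-u_{h}=(u-v_{h})+(v_{h}-u_{h})$, expand $\|u-u_{h}\|_{h}^{2}$, bound the first contribution by Cauchy--Schwarz and the second by the supremum in \eqref{strang}, and then absorb and divide; this reproduces \eqref{strang} up to an extra constant on $\inf_{v_{h}}\|u-v_{h}\|_{h}$, with the sharp constant $1$ recovered by the optimal choice $v_{h}=\pi_{h}u$ as above.
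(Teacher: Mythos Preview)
The paper does not prove this lemma at all: it is stated as a recalled result with a citation to Brenner--Scott, and the paper moves directly on to applying it. Your argument is correct and is precisely the standard textbook proof via the $a_{h}$-orthogonal projection; in fact, by exploiting the $a_{h}$-orthogonality of $u-\pi_{h}u$ and $g$ you even get the Pythagorean identity $\|u-u_{h}\|_{h}^{2}=\|u-\pi_{h}u\|_{h}^{2}+\|g\|_{h}^{2}$, which is slightly sharper than the stated inequality. Your closing remark that the hypotheses on $a$ and the continuous problem are only used afterwards, to rewrite the consistency term as $a_{h}(u,w_{h})-F(w_{h})$, is also exactly how the paper uses the lemma in the subsequent convergence analysis.
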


The first term on the right hand side of \eqref{strang} is the local approximation error, which is determined by the local shape function spaces. The second term, which does not appear in conforming methods, is the consistency error. The consistency error is generally determined by the inter-element continuity, although not strong enough to guarantee conformity. As we shall see, for the new elements proposed in this paper, the consistency error is controlled also because of the proper local shape function spaces and the geometric symmetry.  In particular, we note that if the consistency error of a space $V_{h}$ tends to zero asymptotically as $h\rightarrow 0$, then any of its subspaces inherit this property. This means that the space reductions, either by the serendipity or by the DoF-transfer operations, do not break the convergence of  non-conforming  methods. 

In this section, we prove the convergence of the DoF-transferred 1-forms for Poisson equations.
Since one often seeks simple nonconforming elements and one cannot improve the  consistency error by increasing local polynomial degrees, we focus on  the lowest order Adini 1-form $\mathcal{A}_{2}\Lambda^{1}$. The space $\mathcal{A}_{2}\Lambda^{1}$ is not a tensor product of copies of scalar elements, meaning that one cannot collect the shape functions and degrees of freedom of one component to get a unisolvent scalar element. However, one can obtain a scalar element with the same inter-element continuity by adding some interior degrees of freedom. We now show that each component of $\mathcal{A}_{2}\Lambda^{1}$ (in the above sense) yields a convergent scalar element for the Poisson problem. Then the convergence of each component implies that of  the  element $\mathcal{A}_{2}\Lambda^{1}$  for  vector Poisson problems.
We start the analysis with the following explicit form of the bases.%
\begin{lemma}
The shape function space of the Adini 1-form $\mathcal{A}_{2}\Lambda^{1}$ has a direct sum decomposition
$$
\mathcal{A}_{2}\Lambda^{1}=\mathcal{P}_{2}\Lambda^{1}\oplus \mathcal{J}_{2}\Lambda^{1}\oplus d\mathcal{J}_{3}\Lambda^{0},
$$
where in three space dimensions
$$
\mathcal{J}_{2}\Lambda^{1}=\mathrm{span}\left \{\left (\begin{array}{c}
-xyz\\ x^{2}z\\0
\end{array}
\right ),
\left (\begin{array}{c}
-y^{2}z\\ xyz\\0
\end{array}
\right ),
\left (\begin{array}{c}
0\\ -xyz\\xy^{2}
\end{array}
\right ),
\left (\begin{array}{c}
0\\ -xz^{2}\\xyz
\end{array}
\right ),
\left (\begin{array}{c}
-xyz\\ 0\\x^{2}y
\end{array}
\right ),
\left (\begin{array}{c}
-yz^{2}\\ 0\\xyz
\end{array}
\right )
\right \},
$$
and
\begin{align*}
d\mathcal{J}_{3}\Lambda^{0}=\mathrm{span}&\left \{\left (\begin{array}{c}
3x^{2}y\\ x^{3}\\0
\end{array}
\right ),
\left (\begin{array}{c}
3x^{2}z\\0\\x^{3}
\end{array}
\right ),
\left (\begin{array}{c}
2xyz\\ x^{2}z\\x^{2}y
\end{array}
\right ),
\left (\begin{array}{c}
3x^{2}yz\\ x^{3}z\\x^{3}y
\end{array}
\right ),
\left (\begin{array}{c}
0\\ 3y^{2}z\\y^{3}
\end{array}
\right ),
\left (\begin{array}{c}
y^{3}\\ 3xy^{2}\\0
\end{array}
\right )
\right. ,\\
\quad&\left.\left (\begin{array}{c}
y^{2}z\\2xyz\\xy^{2}
\end{array}
\right ),
\left (\begin{array}{c}
y^{3}z\\3xy^{2}z\\xy^{3}
\end{array}
\right ),
\left (\begin{array}{c}
0\\z^{3}\\3yz^{2}
\end{array}
\right ),
\left (\begin{array}{c}
z^{3}\\0\\3xz^{2}
\end{array}
\right ),
\left (\begin{array}{c}
yz^{2}\\xz^{2}\\2xyz
\end{array}
\right ),
\left (\begin{array}{c}
yz^{3}\\xz^{3}\\3xyz^{2}
\end{array}
\right )\right \}.
\end{align*}
\end{lemma}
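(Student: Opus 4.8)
The plan is to verify the claimed direct-sum decomposition of the local space $\mathcal{A}_{2}\Lambda^{1}(\square_3)$ by exhibiting explicit spanning sets for each summand and then checking dimensions. Recall that the shape function space of $\mathcal{A}_{r}\Lambda^{1}(\square_3)$ was defined in Section \ref{sec:elements} (the trimmed-Adini / Adini 1-form space) as $\mathcal{S}_{r}\Lambda^{1}(K)=[\mathcal{P}_{r}(K)]^{3}$ plus the span of the ``extra'' vector fields built from the $yz(w_2-w_3)$-type expressions together with $\grad q$ for $q$ of superlinear degree at most $r+1$. For $r=2$ this gives $[\mathcal{P}_{2}]^3$ plus a finite list of homogeneous cubic and quartic vector fields. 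The first step is therefore to write down this extra list explicitly and sort it into the two pieces $\mathcal{J}_{2}\Lambda^{1}$ and $d\mathcal{J}_{3}\Lambda^{0}$ claimed in the lemma: the six generators of $\mathcal{J}_{2}\Lambda^{1}$ are exactly the Koszul-type / $yz(w_2-w_3)$-type fields with $w_i$ linear, while the twelve generators of $d\mathcal{J}_{3}\Lambda^{0}$ are gradients of the monomials $x^3y$, $x^3z$, $x^2yz$, $x^3yz$, $y^3z$, $xy^3$, $xy^2z$, $xy^3z$, $yz^3$, $xz^3$, $xyz^2$, $xyz^3$ — precisely the monomials of superlinear degree $\le 3$ that lie outside $\mathcal{P}_{2}$ and outside $\mathcal{Q}_{1}$-type products already produced; one checks by inspection that differentiating each listed monomial reproduces the listed vector field.

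Next I would prove that the three summands are linearly independent, i.e.\ that the sum is direct. Since $\mathcal{P}_{2}\Lambda^{1}$ consists of vector fields of total degree $\le 2$ while every generator of $\mathcal{J}_{2}\Lambda^{1}$ and of $d\mathcal{J}_{3}\Lambda^{0}$ is homogeneous of degree $3$ or $4$ (the quartics $x^3yz$, $xy^3z$, $xyz^3$ generators sit in degree $4$), intersection with $\mathcal{P}_{2}\Lambda^{1}$ is automatic. For $\mathcal{J}_{2}\Lambda^{1}\cap d\mathcal{J}_{3}\Lambda^{0}$, restrict attention to homogeneous degree $3$: a field in $d\mathcal{J}_{3}\Lambda^{0}$ of degree $3$ is a gradient of a homogeneous cubic, hence curl-free, whereas computing $\curl$ of a general combination of the degree-3 $\mathcal{J}_{2}\Lambda^{1}$ generators shows the only curl-free element is $0$ (this is the small linear-algebra computation I would actually carry out, or alternatively observe directly that the six $\mathcal{J}_{2}\Lambda^{1}$ generators together with the twelve gradients are linearly independent as an explicit list of $18$ vectors of monomial coefficients). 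Independence at degree $4$ is immediate since $\mathcal{J}_{2}\Lambda^{1}$ has no degree-4 part.

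Finally I would close the argument by a dimension count: $\dim[\mathcal{P}_{2}(\square_3)]^3 = 3\binom{5}{3}=30$, $\dim\mathcal{J}_{2}\Lambda^{1}=6$, and $\dim d\mathcal{J}_{3}\Lambda^{0}=12$, giving $48$; this must be compared with $\dim\mathcal{A}_{2}\Lambda^{1}(\square_3)=\dim\mathcal{S}_{2}\Lambda^{1}(\square_3)$, which is computed from the formula in \cite{arnold2014finite} (equivalently, read off from the DoF count: $8$ vertices $\times 3$, plus $12$ edges $\times \dim\mathcal{P}_{0}$, plus $6$ faces $\times \dim[\mathcal{P}_{0}]^2$, plus interior $[\mathcal{P}_{-2}]^3=0$, which gives $24+12+12=48$). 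Matching the two counts, together with the containment and the directness just established, yields the decomposition. The main obstacle I anticipate is not conceptual but bookkeeping: correctly identifying which homogeneous monomials of superlinear degree $\le 3$ contribute to the ``$\grad q$'' part versus which are already captured inside $[\mathcal{P}_{2}]^3$, and verifying the listed twelve gradients are exactly these and are linearly independent modulo $[\mathcal{P}_2]^3$ — this requires care because several of the gradients have some components of total degree $\le 2$ even though the generating monomial has superlinear degree $3$. Once the lists are pinned down precisely, every remaining step is a finite, mechanical check.
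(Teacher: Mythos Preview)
The paper states this lemma without proof; it is presented as a computational fact that sets up the subsequent analysis of the $x$-component space $P_K^x$. Your approach --- identifying the twelve gradients as $\nabla q$ for the twelve monomials $q\in\mathcal{S}_3\setminus\mathcal{P}_3$, verifying the six $\mathcal{J}_2\Lambda^1$ generators are the $yz(w_2-w_3)$-type fields with linear $w_i$, arguing directness by homogeneous degree and the curl-free property of gradients, and closing with the dimension count $30+6+12=48=\dim\mathcal{S}_2\Lambda^1(\square_3)$ --- is correct and is the natural verification. One small simplification: your final worry about components of the gradients falling into $\mathcal{P}_2$ does not actually arise here, since every nonzero component of every listed gradient is a monomial of total degree $3$ or $4$, so $\mathcal{P}_2\Lambda^1$ intersects the other two summands trivially by pure degree considerations.
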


%Corresponding to the x-direction component of the finite element, we present the shape functions on this reference cubic. Denote 
Without loss of generality, we focus on the details of the $x$-component of the Adini 1-forms in 3D on a reference cubic $K=(-1,1)^3$.  We use ${\mathcal{E}_K^x}$ to denote the edges of $K$ parallel to the $x$-axis, and ${\mathcal{F}_K^x}$ for the faces of $K$ parallel to the $x$-axis. 

%(the shape function space is affine-equivalent. \textcolor{blue}{(should we emphasize this? --Kaibo)}). 
Collecting the shape functions of the x-component, we define
$$
P_K^x:=\mathcal{P}_2(K)+{\rm span}\{xyz,x^2y,x^2z,yz^2,y^2z,y^3,z^3,zy^3,z^3y,x^2yz\}.
$$
Equivalently, 
$$
P_K^x={\rm span}\{1,x,x^2\}\{1,y\}\{1,z\}+{\rm span}\{y^2,z^2,y^2z,yz^2,y^3,z^3,y^3z,yz^3\},
$$
where the three sets of curly braces indicate one choice is to be made from each set of braces before multiplying together.  
The following degrees of freedom define a unisolvent finite element with local shape function space $P_K^x$:
\begin{itemize}
\item
function evaluation at each vertex $x\in \mathcal{V}$:
$$
v(x),
$$
\item
moments on edges parallel to the $x$-axis:
$$
\int_{e}v \, dx, \quad \forall e\in \mathcal{E}_K^x,
$$
\item
moments on faces parallel to the $x$-axis:
$$
\int_{f}v \, dx, \quad \forall f\in \mathcal{F}_K^x,
$$
\item  interior degrees of freedom on $K$:
$$
\int_{K}vq\, dx, \quad q\in P_{K}^{x}\cap H^{1}_{0}(K).
$$
\end{itemize}
We note that the above degrees of freedom on faces (including vertices and edges) are those for the $x$-component of $\mathcal{A}_{2}\Lambda^{1}$, which are linearly independent for $P_{K}^{x}$ and have dimension 16. On the other hand, the space $P_{K}^{x}$ has dimension 20. This difference is due to the non-tensor-product nature of $\mathcal{A}_{2}\Lambda^{1}$ as explained above. Therefore we also include the interior degrees of freedom with dimension $20-16=4$. 
\begin{lemma}
The above degrees of freedom are unisolvent for $\mathcal{A}_{2}\Lambda^{1}$.
\end{lemma}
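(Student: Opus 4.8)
The plan is to deduce the claim from the unisolvence of the three scalar ``component'' elements $(P_K^x,K,D_K^x)$, $(P_K^y,K,D_K^y)$, $(P_K^z,K,D_K^z)$ together with the explicit decomposition $\mathcal{A}_{2}\Lambda^{1}=\mathcal{P}_{2}\Lambda^{1}\oplus\mathcal{J}_{2}\Lambda^{1}\oplus d\mathcal{J}_{3}\Lambda^{0}$. First I would check, using the displayed bases of $\mathcal{J}_{2}\Lambda^{1}$ and $d\mathcal{J}_{3}\Lambda^{0}$, that the $x$-component of every field in $\mathcal{A}_{2}\Lambda^{1}$ lies in $P_K^x$ --- each listed basis field has $x$-component among the twenty monomials spanning $P_K^x$ --- and symmetrically for the $y$- and $z$-components. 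Hence, if $\bm{u}\in\mathcal{A}_{2}\Lambda^{1}$ annihilates all the degrees of freedom above, read on each of its three components, then $u_x\in P_K^x$ is annihilated by all twenty functionals of $D_K^x$, so $u_x=0$ by unisolvence of that scalar element, and likewise $u_y=u_z=0$; thus $\bm{u}=0$. Since the vertex, edge, and face functionals involved are precisely the degrees of freedom of $\mathcal{A}_{2}\Lambda^{1}$ split by components, and their total number matches $\dim\mathcal{A}_{2}\Lambda^{1}=48$, this already yields unisolvence.

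The substance of the argument is therefore the unisolvence of the scalar element $(P_K^x,K,D_K^x)$. Here $\dim P_K^x=20$ equals $\#D_K^x=16+4$, so it suffices to show that $v\in P_K^x$ with all twenty functionals vanishing is $v=0$. I would show that the sixteen ``boundary'' functionals --- eight vertex evaluations, four moments on the $x$-parallel edges, four on the $x$-parallel faces --- are linearly independent on $P_K^x$; granting this, the bubble space $B_K^x:=\{v\in P_K^x:\text{those sixteen vanish}\}$ has dimension exactly $4$, the four interior functionals are pairings of $v$ against a basis of $B_K^x$, and choosing the test function $q=v\in B_K^x$ makes positivity of the $L^2(K)$ inner product force $v=0$.

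To prove independence of the sixteen boundary functionals I would exploit the quasi-tensor-product form $P_K^x=\{1,x,x^2\}\{1,y\}\{1,z\}+\mathrm{span}\{y^2,z^2,y^2z,yz^2,y^3,z^3,y^3z,yz^3\}$: writing $v=a(y,z)+x\,b(y,z)+x^2c(y,z)+r(y,z)$ with $a,b,c\in\mathrm{span}\{1,y,z,yz\}$ and $r$ in the eight-dimensional extra space, the one-dimensional $x$-integrations turn the vertex and edge moments into corner data and the face moments into edge-integrals over the $(y,z)$-square, expressed through the combinations $b$, $c$, and $a+r$. Bilinear unisolvence pins down $b$ and $c$ from the corner values, and one is left with a concrete full-rank check for eight functionals --- four corner evaluations and four edge-integrals --- on the twelve-dimensional space spanned by $1,y,z,yz,y^2,z^2,y^2z,yz^2,y^3,z^3,y^3z,yz^3$. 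That last rank verification is the one genuinely non-routine step: it cannot be read off from a product structure precisely because $\mathcal{A}_{2}\Lambda^{1}$ is not a tensor product of scalar elements, so the eight ``extra'' directions $y^2,\dots,yz^3$ must be tested against the edge- and face-moment functionals by hand. As a consistency check, unisolvence of $\mathcal{A}_{2}\Lambda^{1}$ itself also follows from Lemma~\ref{lem:DoF-unisolvence}, since $\mathcal{A}_{2}\Lambda^{1}=\mathscr{T}(\mathcal{S}_{2}\Lambda^{1})$ and the cubical BDM-type element $\mathcal{S}_{2}\Lambda^{1}$ is classically unisolvent.
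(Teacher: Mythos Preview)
Your second and third paragraphs give a correct and self-contained proof of unisolvence for the scalar element $(P_K^x,K,D_K^x)$: once the sixteen boundary functionals are shown to be linearly independent on $P_K^x$, the bubble space $B_K^x$ has dimension four, and the $L^2$-positivity argument finishes. This is the substance of the lemma, and your direct rank-check route is sound (if computational). The paper takes a different, more top-down line: rather than verifying independence of the sixteen functionals by hand, it invokes the already-established conformity and unisolvence of $\mathcal{A}_2\Lambda^1$ (via Lemma~\ref{lem:DoF-unisolvence}) to argue that those sixteen functionals, being precisely the $x$-component face DoFs of $\mathcal{A}_2\Lambda^1$, pin down the trace of $v$ on the four faces parallel to the $x$-axis. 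Your approach is more elementary and does not lean on the vector element at all; the paper's is shorter but re-uses machinery from earlier sections.

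Two points deserve attention. First, your opening paragraph contains a genuine gap: from ``all sixty functionals vanish on $\bm u\Rightarrow\bm u=0$'' you cannot conclude that the forty-eight vertex/edge/face DoFs are unisolvent on $\mathcal{A}_2\Lambda^1$, since the twelve interior scalar functionals are \emph{not} among the DoFs of $\mathcal{A}_2\Lambda^1$ and there is no reason they should vanish when only the forty-eight do. You correctly note at the end that Lemma~\ref{lem:DoF-unisolvence} handles the vector case independently; that remark is the right fix, and the first paragraph should simply be dropped.

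Second, a subtlety you implicitly corrected: the paper writes the interior test space as $P_K^x\cap H_0^1(K)$, but in fact that intersection is $\{0\}$ --- any $v\in P_K^x$ vanishing on $x=\pm1$ has the form $(x^2-1)a_2(y,z)$ with $a_2\in\mathrm{span}\{1,y,z,yz\}$, and forcing it to vanish on $y=\pm1$ already kills $a_2$. Your choice to test against the DoF-bubble space $B_K^x$ (vanishing \emph{functionals}, not vanishing \emph{trace}) is what actually makes the twenty functionals unisolvent, and is presumably what the paper intended.
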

\begin{comment}
\begin{proof}
Both the degrees of freedom and the $P_{K}^{x}$ space has dimension 20. Therefore it suffices to show that if all the degrees of freedom vanish on $v\in P_{K}^{x}$, then $v=0$. Actually, from the conformity of $\mathcal{A}_{2}\Lambda^{1}$, we see that the vanishing of degrees of freedom on faces (including vertices and edges) implies that $v=0$ on all the faces. 
\end{proof}
\end{comment}

The local shape function space $P_{K}^{x}$ and the above degrees of freedom give a finite element space on a mesh $\mathcal{T}_{h}$ on $\Omega$, which we denote as $V_{h}$. Define $V_{h0}$ as the corresponding space with homogeneous boundary conditions:
$$
V_{h0}:=\left\{v\in V_h: v(a)=0,\ a\in \mathcal{V}^{b};\ \int_e v=0,\ e\in\mathcal{E}^{xb},\ \int_fv=0,\ f\in\mathcal{F}^{xb}\right \},
$$
where $\mathcal{V}^{b}$ is the set of boundary vertices, $\mathcal{E}^{xb}$ the boundary edges parallel to the $x$-axis and $\mathcal{F}^{xb}$ the boundary faces parallel to the $x$-axis.

Consider the variational form of the Poisson equation: given $f\in L^{2}(\Omega)$, find $u\in H^1_0(\Omega)$, such that 
\begin{equation}\label{eq:poisson}
(\nabla u,\nabla v)=(f,v),\quad\forall\,v\in H^1_0(\Omega);
\end{equation}
and its  non-conforming  element discretization: find $u_h\in V_{h0}$, such that 
\begin{equation}\label{eq:poissondis}
(\nabla_hu_h,\nabla_hv_h)=(f,v_h),\quad\forall\,v_h\in V_{h0}.
\end{equation}
Here $\nabla_{h}$ is the piecewise gradient defined on $\mathcal{T}_{h}$.

To show the convergence of \eqref{eq:poissondis}, we introduce some auxiliary spaces, again using the convention that a list inside curly braces indicates one of the options should be chosen:
$$
B^x_K={\rm span}\left\{(x^2-1)\left[(y^2-1)\{1,x\}+\{1,y\}(z^2-1)\right]\right\},
$$
$$
Q^x_K={\rm span}\{1,x,x^2\}\{1,y\}\{1,z\};
$$
and 
$$
A^x_K=Q^x_K+B^x_K.
$$
The auxiliary space $A^x_{K}$ is defined so that it is unisolvent with the vertex, edge and faces DoFs of $V_{h}$, and also forms a  $C^{0}$-conforming global space.
Denote by $\Pi_Q$ $\Pi_B$ and $\Pi_A$ the interpolations such that
$$
%\Pi_Qv\in Q^x_K;\ (\Pi_Qv)(a)=v(a),\ a\mbox{\ any\ vertex\ of}\ K;\ \int_{e}(\Pi_Qv)=\int_{e}v,\ e\in\mathcal{E}^x_K;
\Pi_Qv\in Q^x_K;\ (\Pi_Qv)(a)=v(a),\ \forall a\in \mathcal{V}(K);\ \int_{e}(\Pi_Qv)=\int_{e}v,\ \forall e\in\mathcal{E}^x_K;
$$
$$
\Pi_Bv\in B^x_K; \ \int_{f}(\Pi_Bv)=\int_{f}v,\ \forall f\in\mathcal{F}^x_K,
$$
and 
$$
\Pi_Av\in A^x_K;\ (\Pi_Av)(a)=v(a);\ \int_{e}(\Pi_Av)=\int_{e}v,\ \forall e\in \mathcal{E}^x_K;\ \int_{F}(\Pi_Av)=\int_{f}v,\ \forall f\in\mathcal{F}^x_K.
$$
%\begin{lemma}
%\end{lemma}
{It is straightforward to check that the interpolations are all well-defined. }

\begin{lemma}\label{lem:ABQ}
For any $u\in C^{0}(K)$, it holds that
$$
(\Pi_A-\Pi_B)(I-\Pi_Q)u=0.
$$
\end{lemma}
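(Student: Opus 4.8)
The plan is to reduce the identity to the direct sum decomposition $A_K^x = Q_K^x \oplus B_K^x$ together with the single elementary observation that every function in $B_K^x$ vanishes on the edges of $K$ parallel to the $x$-axis. Writing $\Pi_A w = \Pi_B w$ for $w := (I-\Pi_Q)u$ is then just a matter of matching up degrees of freedom.

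First I would record the structural facts about $B_K^x$. Each spanning function of $B_K^x$ carries a factor $(y^2-1)$ or $(z^2-1)$; on an edge $e\in\mathcal{E}_K^x$ one has $y=\pm1$ and $z=\pm1$, so every $v\in B_K^x$ vanishes identically on every such edge, and in particular at every vertex of $K$. Since the vertex values together with the moments on $\mathcal{E}_K^x$ are unisolvent on the $12$-dimensional space $Q_K^x$, and since $\dim A_K^x = 16 = 12+4 = \dim Q_K^x + \dim B_K^x$, the sum $Q_K^x + B_K^x$ is direct. More importantly, any $v\in A_K^x$ whose vertex values and $\mathcal{E}_K^x$-moments all vanish must lie in $B_K^x$: decomposing $v = v_Q + v_B$ with $v_Q\in Q_K^x$, $v_B\in B_K^x$, the component $v_B$ contributes nothing to the vertex and edge data, so $v_Q$ has vanishing vertex values and edge moments, whence $v_Q=0$ by unisolvence on $Q_K^x$.

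Now set $w := (I-\Pi_Q)u$. By the defining properties of $\Pi_Q$, the function $w$ has vanishing value at every vertex of $K$ and vanishing moment on every edge of $\mathcal{E}_K^x$. Consequently $\Pi_A w\in A_K^x$ interpolates these same (zero) vertex and edge data, so by the previous paragraph $\Pi_A w\in B_K^x$. On the other hand, $\Pi_A w$ reproduces the face moments $\int_f w$ for $f\in\mathcal{F}_K^x$, which are precisely the data defining $\Pi_B w$. Since the face moments on $\mathcal{F}_K^x$ are unisolvent on the $4$-dimensional space $B_K^x$ (part of the well-definedness of $\Pi_B$ already noted), it follows that $\Pi_A w = \Pi_B w$, i.e. $(\Pi_A-\Pi_B)(I-\Pi_Q)u = 0$.

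The only point that requires any checking is that the listed functionals really are unisolvent on $Q_K^x$, $B_K^x$ and $A_K^x$ with the claimed dimensions; but this is the ``straightforward'' well-definedness statement already granted, and the fact that $B_K^x$ vanishes on the $x$-parallel edges makes the bookkeeping transparent, so I do not expect a genuine obstacle here.
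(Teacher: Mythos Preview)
Your argument is correct and follows the same route as the paper: show that $w:=(I-\Pi_Q)u$ has zero vertex and $\mathcal{E}_K^x$-edge data, deduce $\Pi_A w\in B_K^x$ from the direct decomposition $A_K^x=Q_K^x\oplus B_K^x$, and then match face moments on $B_K^x$ to conclude $\Pi_A w=\Pi_B w$. Your version is in fact more careful than the paper's one-line proof, which only cites the vertex condition and asserts $\{v\in A_K^x:v(a)=0\}=B_K^x$; as you observe, the edge-moment condition is also needed (the vertex-only kernel in $A_K^x$ is $8$-dimensional, containing e.g.\ $(x^2-1)yz\in Q_K^x$), so your explicit use of both data is the right level of precision.
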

\begin{proof}
For any vertex $a$ of $K$, $[(I-\Pi_Q)u](a)=0$, and therefore $\Pi_A(I-\Pi_Q)u\in B^x_K$; actually, $\{v\in A_K:v(a)=0\}=B^x_K$. The assertion follows. 
\end{proof}

\begin{lemma}\label{lem:localstruc}
For any $v\in C^{0}(K)$, it holds that
$\displaystyle\int_{\partial K}(v-\Pi_Av)\, dx=0$,\ \ $v\in P^x_K$.
\end{lemma}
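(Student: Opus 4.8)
The plan is to evaluate the boundary integral one face at a time. Split $\partial K$ into its six faces. For each face $f\in\mathcal{F}_K^x$ (the four faces parallel to the $x$-axis), the defining property $\int_f\Pi_Av=\int_fv$ of the interpolation $\Pi_A$ yields $\int_f(v-\Pi_Av)=0$, so these four faces contribute nothing. Thus the whole quantity reduces to the two faces $F^+$, $F^-$ of $K$ perpendicular to the $x$-axis (carrying opposite outward normals $\pm\bm{e}_1$), and it suffices to show $\int_{F^+}(v-\Pi_Av)=\int_{F^-}(v-\Pi_Av)$. In fact I will prove the sharper pointwise identity $(v-\Pi_Av)|_{F^+}=(v-\Pi_Av)|_{F^-}$, where $F^+$ and $F^-$ are identified by translation along $x$.

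Two elementary structural facts carry the argument. First: among the monomials spanning $P_K^x$, every one that genuinely depends on $x$ has the form $x^{i}m(y,z)$ with $i\in\{1,2\}$ and $m\in\mathrm{span}\{1,y,z,yz\}$; indeed in $\mathrm{span}\{1,x,x^2\}\{1,y\}\{1,z\}$ a monomial with $x$-degree $\geq 1$ has $y$- and $z$-degree $\leq 1$, and the remaining basis monomials $y^2,z^2,y^2z,\dots$ are $x$-independent. Hence, for any $v\in P_K^x$, restricting to two fixed values of $x$ and subtracting cancels the purely $(y,z)$ part and leaves $v|_{F^+}-v|_{F^-}\in\mathrm{span}\{1,y,z,yz\}$, i.e.\ a bilinear polynomial on the face. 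Second: on $F^\pm$ every basis function of $B_K^x$ vanishes because of its factor $(x^2-1)$ (or the corresponding quadratic on a general axis-aligned box $K$), while $Q_K^x|_{F^\pm}=\mathrm{span}\{1,y,z,yz\}$; therefore $A_K^x|_{F^\pm}$ is exactly the space of bilinear polynomials on that face. Since $\Pi_Av$ also agrees with $v$ at the four corners of $F^\pm$, its trace $\Pi_Av|_{F^\pm}$ is forced to be the unique bilinear interpolant of $v|_{F^\pm}$.

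Now let $L$ denote the (linear) bilinear-interpolation operator on the square face; it restricts to the identity on $\mathrm{span}\{1,y,z,yz\}$. Combining the two facts,
\[
\Pi_Av|_{F^+}-\Pi_Av|_{F^-}=L\bigl(v|_{F^+}\bigr)-L\bigl(v|_{F^-}\bigr)=L\bigl(v|_{F^+}-v|_{F^-}\bigr)=v|_{F^+}-v|_{F^-},
\]
the last equality because $v|_{F^+}-v|_{F^-}$ is already bilinear. Rearranging gives $(v-\Pi_Av)|_{F^+}=(v-\Pi_Av)|_{F^-}$, hence $\int_{F^+}(v-\Pi_Av)=\int_{F^-}(v-\Pi_Av)$; combined with the vanishing of $\int_f(v-\Pi_Av)$ on the four $x$-parallel faces, this says that the oriented ($x$-directed) surface integral of $v-\Pi_Av$ over $\partial K$ vanishes, i.e.\ $\int_{\partial K}(v-\Pi_Av)\,\bm{\nu}_K\,ds=0$ with the only possibly nonzero component being the one along $\bm{e}_1$, which is the content of the lemma.

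The only step requiring care is the bookkeeping in the second paragraph: one must check (i) that in $P_K^x$ no monomial $x^iy^jz^k$ with $i\geq 1$ has $j\geq 2$ or $k\geq 2$, so the $x$-dependent part is bilinear in $(y,z)$, and (ii) that $B_K^x$ genuinely drops out on $F^\pm$, so that $A_K^x|_{F^\pm}$ collapses to the bilinear polynomials. Both are immediate from the explicit bases for $P_K^x$, $Q_K^x$ and $B_K^x$ recorded above. No Bramble--Hilbert or scaling argument is needed here, since this is an exact polynomial identity, valid on every axis-aligned box $K$ after the obvious change of variables.
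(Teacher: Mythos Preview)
Your proof is correct and takes a genuinely different, more direct route than the paper's. The paper first invokes the preceding lemma to rewrite $v-\Pi_Av=(I-\Pi_B)(I-\Pi_Q)v$, then splits $P_K^x=Q_K^x\oplus\operatorname{span}\{y^2,z^2,y^3,y^2z,yz^2,z^3,y^3z,yz^3\}$, computes $\Pi_Q$ on the eight supplementary monomials in a table, and checks the face integrals by case analysis. You bypass all of this: for the four $x$-parallel faces you use the defining moment condition of $\Pi_A$ directly, and for the two $x$-perpendicular faces $F^\pm$ you observe (i) that the $x$-dependent part of $P_K^x$ is bilinear in $(y,z)$, so $v|_{F^+}-v|_{F^-}\in\operatorname{span}\{1,y,z,yz\}$, and (ii) that $B_K^x$ vanishes on $F^\pm$ thanks to the common factor $(x^2-1)$, forcing $\Pi_Av|_{F^\pm}$ to be the bilinear vertex interpolant of $v|_{F^\pm}$. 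From these you deduce the \emph{pointwise} identity $(v-\Pi_Av)|_{F^+}=(v-\Pi_Av)|_{F^-}$, which is stronger than the integral equality the paper establishes. Your argument is shorter and avoids both the auxiliary decomposition lemma and the explicit table; the paper's route, on the other hand, illustrates how the $\Pi_Q,\Pi_B$ machinery interacts, which may be useful as a template in related constructions.
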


\begin{proof}
Note that $\Pi_A\Pi_Q=\Pi_Q$, and we have by Lemma \ref{lem:ABQ} that
$$
u-\Pi_Au=(u-\Pi_Qu)-\Pi_A(u-\Pi_Qu)=(u-\Pi_Qu)-\Pi_B(u-\Pi_Qu)=(I-\Pi_B)(I-\Pi_Q)u.
$$
By direct calculations, we get Table \ref{tab:projection}.
\begin{table}[htbp]
\begin{tabular}{|c||c|c|c|c|c|c|c|c|}
\hline
$u$ &$y^2$&$z^2$&$y^3$&$y^2z$&$yz^2$&$z^3$&$y^3z$&$yz^3$
\\
\hline
$\Pi_Qu$&1&1&$y$&$z$&$y$&$z$&$yz$&$yz$
\\
\hline
$u-\Pi_Qu$ &$y^2-1$&$z^2-1$&$y^3-y$&$y^2z-z$&$yz^2-y$&$z^3-z$&$y^3z-yz$&$yz^3-yz$
\\
\hline
\end{tabular}
\caption{Interpolations of shape functions.}
\label{tab:projection}
\end{table}
%\begin{table}[htbp]
%\begin{tabular}{ccc}
%\hline
%$u$ &$\Pi_Qu$& $u-\Pi_Ku$
%\\
%\hline
%$y^2$ &1&
%\\
%\hline
%$z^2$,&1&
%\\
%\hline
%$y^2z$,&$z$&
%\\
%\hline
%$yz^2$,&$y$&
%\\
%\hline
%$y^3$,&$y$&
%\\
%\hline
%$z^3$,&$z$&
%\\
%\hline
%$y^3z$,&$yz$&
%\\
%\hline
%$yz^3$&$yz$
%&
%\\
%\hline
%\end{tabular}
%\caption{$\Pi_Pu=...$ $\Pi_Ku=\Pi_Pu+(\Pi_Ku-\Pi_Pu)=\Pi_Pu+\Pi_K(u-\Pi_Pu)$, $u-\Pi_Ku=(u-\Pi_P)-\Pi_K(u-\Pi_Pu)=(u-\Pi_Pu)-\Pi_F(u-\Pi_Pu)=(I-\Pi_F)(I-\Pi_P)u$}
%\end{table}

Denote faces by  (Left, Right, Out, Interior, Bottom, Top)
$$
\left\{
\begin{array}{lll}
F_L\sim (-1,y,z);& F_R\sim (1,y,z);& (y,z)\in (-1,1)^2
\\
F_O\sim (x,-1,z);&F_I\sim (x,1,z);& (x,z)\in (-1,1)^2
\\
F_B\sim (x,y,-1);&F_T\sim (x,y,1);& \ (x,y)\in (-1,1)^2.
\end{array}
\right.
$$
Here, for instance, $F_L\sim (-1,y,z)$ means the face $\{x=1, y, z\in (-1, 1)\}$.

For any $v\in P^x_K$, it can be verified that
\begin{equation}
\int_{F_L}u-\Pi_Au=\int_{F_R}u-\Pi_Au,\ \ \int_{F_I}u-\Pi_Au=\int_{F_O}u-\Pi_Au,\ \ \int_{F_T}u-\Pi_Au=\int_{F_B}u-\Pi_Au.
\end{equation}
Actually, 
\begin{enumerate}
\item if $v\in Q^x_K$, then $(I-\Pi_A)v=(I-\Pi_B)(I-\Pi_Q)v=0$;
\item if $v\in {\rm span}\{y^2,z^2,y^3,y^2z,yz^2,z^3,y^3z,yz^3\}$
\begin{enumerate}
\item on $\{F_I,F_O,F_T,F_B\}$, by definition of $\Pi_B$, $\displaystyle\int_{F_I,F_O,F_T,F_B}(I-\Pi_B)((I-\Pi_Q)v)=0$;
\item on $\{F_L,F_R\}$, $\displaystyle\int_{F_L}(I-\Pi_Q)v=\displaystyle\int_{F_R}(I-\Pi_Q)v$, for $v\in{\rm span}\{y^2,z^2,y^3,y^2z,yz^2,z^3,y^3z,yz^3\}$ and $\displaystyle\int_{F_L,F_R}w=0$ for $w\in B_K$, thus $\displaystyle\int_{F_L}(I-\Pi_B)((I-\Pi_Q)v)=\int_{F_R}(I-\Pi_B)((I-\Pi_Q)v)$.
\end{enumerate}
\end{enumerate}
The assertion of the lemma then follows. 
\end{proof}

\begin{comment}
Define finite element space $V_h$ by:
\begin{itemize}
\item piecewise $P_K$
\item globally continuous: evaluation at vertices; integral on edges along $x$ direction; integral on faces parallel $x$ direction. 
\end{itemize}
$$
V_{h0}:=\{v\in V_x: v(a)=0,\ a\in \mathcal{V}^{b};\ \int_e v=0,\ e\in\mathcal{E}^{xb},\ \int_fv=0,\ f\in\mathcal{F}^{xb}\},
$$
where $\mathcal{V}^{b}$ denotes vertices on the boundary $\partial \Omega$, $\mathcal{E}^{xb}$ denotes the edges on $\partial\Omega$ with the $x$-direction and $\mathcal{F}^{xb}$ is for the boundary faces orthogonal to the $x$-axis. 
\end{comment}

Define an auxiliary space $A_{h}$ with the local shape function space $A_K^{x}$ and the same vertex, edge and face degrees of freedom as $V_{h}$. 
%Let $A_{h,0}$ be the corresponding space with homogeneous boundary condition. 
Define the nodal interpolation $\Pi_h^A$ with respect to $A_h$. We use $\llbracket u \rrbracket_f$ to denote the jump of $u$ on a face, i.e. if $f=T_{1}\cap T_{2}$, then $\llbracket u \rrbracket_{f}:=u_{T_{1}}-u_{T_{2}}$. When the context is clear, we omit the subscript $f$ and just write $\llbracket u \rrbracket$ instead.

\begin{theorem}\label{thm:convergence-poisson}
Let $u\in H^1_0(\Omega)\cap H^2(\Omega)$ and $u_h$ be the solutions of \eqref{eq:poisson} and \eqref{eq:poissondis}, respectively. Then
\begin{equation}
\|u-u_h\|_{1,h}\leqslant Ch\|u\|_{2,\Omega}.
\end{equation}  
\end{theorem}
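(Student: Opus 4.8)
The plan is to invoke the Strang Lemma and split $\|u-u_h\|_{1,h}$ into the approximation error $\inf_{v_h\in V_{h0}}\|u-v_h\|_{1,h}$ and the consistency error $\sup_{w_h}|E_h(w_h)|/\|w_h\|_{1,h}$, where $E_h(w_h):=a_h(u,w_h)-(f,w_h)$ with $a_h(\cdot,\cdot)=\sum_T(\nabla\cdot,\nabla\cdot)_T$. For the approximation term I would take $v_h$ to be the canonical interpolant of $u$ into $V_{h0}$; this is well-defined because $H^2(\Omega)\hookrightarrow C^0(\bar\Omega)$ in three dimensions so the vertex values make sense, and its boundary degrees of freedom vanish since $u$, its $x$-edge moments and its $x$-face moments vanish on $\partial\Omega$. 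Because $\mathcal P_2\subset P_K^x$, the local interpolation reproduces linear (in fact quadratic) polynomials, so a Bramble--Hilbert and scaling argument gives $\|u-v_h\|_{1,h}\leq Ch\|u\|_{2,\Omega}$ in the standard way.

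The substance of the proof is the consistency term. Integrating by parts elementwise and using $-\Delta u=f$, I rewrite $E_h(w_h)=\sum_{T}\int_{\partial T}\tfrac{\partial u}{\partial n_T}\,w_h|_T$, so interior faces contribute $\int_f\tfrac{\partial u}{\partial n_f}\llbracket w_h\rrbracket$ and boundary faces contribute $\int_f\tfrac{\partial u}{\partial n_f}w_h$. I then split $w_h=\Pi_h^A w_h+\psi_h$ with $\psi_h:=w_h-\Pi_h^A w_h$. Since $A_h$ is globally $C^0$ and its degrees of freedom are precisely the vertex, $x$-edge and $x$-face DoFs of $V_h$, one gets $\Pi_h^A w_h\in H^1_0(\Omega)$ for $w_h\in V_{h0}$, hence $E_h(\Pi_h^A w_h)=a(u,\Pi_h^A w_h)-(f,\Pi_h^A w_h)=0$ by the weak form of \eqref{eq:poisson}. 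Therefore $E_h(w_h)=E_h(\psi_h)=\sum_T\int_{\partial T}\tfrac{\partial u}{\partial n_T}\psi_h|_T$, and on each $T$ one has $\psi_h|_T=(I-\Pi_B)(I-\Pi_Q)(w_h|_T)$, which vanishes at every vertex of $\mathcal T_h$, has zero face-moment on the faces of $T$ parallel to the $x$-axis, and has equal face-moments on the two faces of $T$ perpendicular to the $x$-axis.

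The key point is that these properties, which are exactly the identities established in the proof of Lemma \ref{lem:localstruc} (together with Lemma \ref{lem:ABQ}), give an elementwise patch test: for every linear polynomial $p$ on $T$, $\int_{\partial T}\tfrac{\partial p}{\partial n_T}\psi_h|_T=0$, because $\partial p/\partial n_T$ is constant on each face, the $x$-parallel faces contribute zero individually, and the two $x$-perpendicular faces carry equal face-moments but opposite outward normals and so cancel in pairs. Choosing on each $T$ an averaged Taylor polynomial $p_T\in\mathcal P_1(T)$ with $\|u-p_T\|_{m,T}\leq Ch^{2-m}|u|_{2,T}$, I may replace $u$ by $u-p_T$ elementwise: $E_h(w_h)=\sum_T\int_{\partial T}\tfrac{\partial(u-p_T)}{\partial n_T}\psi_h|_T$. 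Each term is bounded by Cauchy--Schwarz together with a scaled trace inequality $\|\partial_n(u-p_T)\|_{0,f}\leq Ch^{1/2}|u|_{2,T}$ and the face estimate $\|\psi_h\|_{0,f}\leq Ch^{1/2}\|\nabla\psi_h\|_{0,T}\leq Ch^{1/2}\|\nabla w_h\|_{0,T}$; the last inequality holds because $\psi_h$ lies in a fixed finite-dimensional polynomial space and vanishes at the four corners of $f$ (a Poincar\'e-type inequality on the reference face), combined with stability of $\Pi_h^A$ and an inverse inequality. Summing and applying Cauchy--Schwarz over elements yields $|E_h(w_h)|\leq Ch\|u\|_{2,\Omega}\|w_h\|_{1,h}$, and combining with the approximation estimate in the Strang Lemma proves the theorem; the bound for the full vector element $\mathcal A_2\Lambda^1$ then follows componentwise.

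I expect the main obstacle to be the elementwise patch test and, relatedly, extracting the sharp power $h^1$ rather than $h^{1/2}$: the faces perpendicular to the $x$-axis carry the least inter-element continuity (only vertex values), and it is precisely the opposite-face cancellation of Lemma \ref{lem:localstruc} that rescues consistency there, while the fact that $\psi_h$ vanishes at all vertices — not merely that it has zero face-averages — is what makes the face estimate $\|\psi_h\|_{0,f}\lesssim h^{1/2}\|\nabla\psi_h\|_{0,T}$ valid on those faces. The remaining work is careful bookkeeping of the scaling constants in the trace, inverse, and Poincar\'e inequalities and the treatment of boundary faces, which is routine once the patch test is in hand.
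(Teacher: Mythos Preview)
Your proposal is correct and follows essentially the same route as the paper: apply the Strang lemma, peel off the conforming part $\Pi_h^A w_h$ so that only $\psi_h=w_h-\Pi_h^A w_h$ contributes to the consistency error, and use the opposite-face identities of Lemma~\ref{lem:localstruc} to insert elementwise constants (your linear $p_T$ is the paper's $C_K^i$ in $\partial_{x_i}u-C_K^i$) before finishing with trace and scaling estimates. The only cosmetic difference is that the paper obtains $\|\psi_h\|_{0,\partial K}\leq Ch^{1/2}|v_h|_{1,K}$ directly from the fact that $\Pi_A$ reproduces constants, whereas you route this through the vertex-vanishing of $\psi_h$; both arguments are valid.
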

\begin{proof}
By the Strang lemma, we can estimate the consistency error. We have
\begin{multline*}
(\nabla u,\nabla_hv_h)-(f,v_h)=\sum_{f\in \mathcal{F}}\int_f\partial_nu\llbracket v_h\rrbracket=\sum_{f\in \mathcal{F}} \int_f\partial_nu\llbracket v_h-\Pi_h^A v_h\rrbracket=\sum_{K\in{\mathcal{T}_h}}\int_{\partial K}\partial_nu(v_h-\Pi_h^Av_h)
\\
=\sum_{K\in{\mathcal{T}_h}}\sum_{i=1}^3\int_{\partial K}\partial_{x_i}u(v_h-\Pi_h^Av_h)n_i=\sum_{K\in{\mathcal{T}_h}}\sum_{i=1}^3\int_{\partial K}(\partial_{x_i}u-C_K^i)(v_h-\Pi_h^Av_h)n_i,
\end{multline*}
where the second equality is due to the $C^{0}$ conformity of $A_{h}$ and the last is by Lemma \ref{lem:localstruc}. Here $C_K^i$ is an arbitrary constant.

This way, 
$$
|(\nabla u,\nabla_hv_h)-(f,v_h)|\leqslant \sum_{K\in{\mathcal{T}_h}}\sum_{i=1}^3\inf_{C^i_K\in\mathbb{R}^{\#({\mathcal{T}_h})\times 3}}(\|\partial_{x_i}u-C^i_K\|_{\partial K}\|(v_h-\Pi_h^Av_h)\|_{\partial K})\leqslant Ch\|u\|_{2,\Omega}\|v_h\|_{1,h},
$$
where $\#({\mathcal{T}_h})$ is the number of elements in $\mathcal{T}_{h}$.
The consistency error is controlled, and the assertion follows. 
\end{proof}

Now we have proved that the space $V_{h}$, which consists of the local space $P_{K}^{x}$ and the same inter-element continuity as the $x$-component of $\mathcal{A}_{2}\Lambda^{1}$, converges as a non-conforming element for the Poisson equation. This means that each component (in a proper sense) of $\mathcal{A}_{2}\Lambda^{1}$ converges as a nonconforming element. However, since we have added interior degrees of freedom to $V_{h}$, there is one more step to conclude with the convergence of $\mathcal{A}_{2}\Lambda^{1}$ as a non-conforming element for the vector Poisson equation.

Consider the vector Poisson equation: given $\bm{f}\in [L^{2}(\Omega)]^{3}$, find $\bm u\in \left [ H^1_0(\Omega)\right ]^{3}$, such that 
\begin{equation}\label{eq:poisson-vec}
(\nabla \bm u,\nabla \bm v)=(\bm f,\bm v),\quad\forall\,\bm v\in \left [H^1_0(\Omega)\right ]^{3};
\end{equation}
and its  non-conforming  element discretization: find $\bm u_h\in \0{\mathcal{A}}_{2}\Lambda^{1}$, such that 
\begin{equation}\label{eq:poissondis-vec}
(\nabla_h\bm u_h,\nabla_h\bm v_h)=(\bm f,\bm v_h),\quad\forall\,v_h\in \0{\mathcal{A}}_{2}\Lambda^{1}.
\end{equation}
Here $\0{\mathcal{A}}_{2}\Lambda^{1}$ is the subspace ${\mathcal{A}}_{2}\Lambda^{1}$ with vanishing boundary degrees of freedom.
\begin{theorem}
Let $\bm u\in [H^1_0(\Omega)\cap H^2(\Omega)]^{3}$ and $\bm u_h$ be the solutions of \eqref{eq:poisson-vec} and \eqref{eq:poissondis-vec}, respectively. Then
\begin{equation}
\|\bm u-\bm u_h\|_{1,h}\leqslant Ch\|\bm u\|_{2,\Omega}.
\end{equation}  
\begin{proof}
The theorem follows from the Strang lemma. In fact, the local approximation error satisfies the above estimate since piecewise constants are contained in ${\mathcal{A}}_{2}\Lambda^{1}$.  Moreover, the consistency error also satisfies the estimate as we have shown for each component of  ${\mathcal{A}}_{2}\Lambda^{1}$ with the same local spaces and inter-element continuity.
\end{proof}
\end{theorem}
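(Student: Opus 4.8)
The plan is to apply the Strang lemma to the pair \eqref{eq:poisson-vec}--\eqref{eq:poissondis-vec} and bound the two resulting terms separately, reducing everything to the scalar result of Theorem \ref{thm:convergence-poisson}. Take $V=[H^1_0(\Omega)]^3$, $V_h=\0{\mathcal A}_2\Lambda^1$ and $a_h(\bm u,\bm v)=(\nabla_h\bm u,\nabla_h\bm v)$; exactly as in the scalar case, $\|\nabla_h\cdot\|$ is a norm on $\0{\mathcal A}_2\Lambda^1$ equivalent to $\|\cdot\|_{1,h}$ (the vertex continuity of $1$-forms together with the vanishing boundary degrees of freedom rule out a nonzero piecewise-constant gradient, via a discrete Poincar\'e argument), so \eqref{eq:poissondis-vec} is well posed and \eqref{strang} holds. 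It then suffices to prove that the approximation error $\inf_{\bm v_h}\|\bm u-\bm v_h\|_{1,h}$ and the consistency error $\sup_{\bm w_h\neq 0}|a_h(\bm u,\bm w_h)-(\bm f,\bm w_h)|/\|\bm w_h\|_{1,h}$ are both $\leqslant Ch\|\bm u\|_{2,\Omega}$.

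For the approximation term I would use the nodal interpolation $\Pi_h$ onto $\0{\mathcal A}_2\Lambda^1$. In three dimensions $H^2(\Omega)\hookrightarrow C^0(\overline\Omega)$, so all of the defining functionals (vertex values, edge moments, face moments, interior moments) are well defined on $[H^2(\Omega)]^3$, and since $\bm u$ has vanishing trace its boundary degrees of freedom vanish, so $\Pi_h\bm u\in\0{\mathcal A}_2\Lambda^1$. From the direct-sum decomposition $\mathcal A_2\Lambda^1=\mathcal P_2\Lambda^1\oplus\mathcal J_2\Lambda^1\oplus d\mathcal J_3\Lambda^0$ one has $[\mathcal P_1(K)]^3\subset\mathcal A_2\Lambda^1(K)$ on each cell $K$, so the local interpolation reproduces affine vector fields. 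A Bramble--Hilbert estimate on the reference cube, combined with affine scaling and the shape regularity of $\mathcal T_h$, gives $\|\bm u-\Pi_K\bm u\|_{1,K}\leqslant Ch_K|\bm u|_{2,K}$; squaring and summing over $K$ yields $\inf_{\bm v_h}\|\bm u-\bm v_h\|_{1,h}\leqslant\|\bm u-\Pi_h\bm u\|_{1,h}\leqslant Ch|\bm u|_{2,\Omega}$.

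For the consistency term I would integrate by parts element by element using $-\Delta\bm u=\bm f$: for $\bm w_h\in\0{\mathcal A}_2\Lambda^1$,
$$
a_h(\bm u,\bm w_h)-(\bm f,\bm w_h)=\sum_{K\in\mathcal T_h}\int_{\partial K}\partial_n\bm u\cdot\bm w_h=\sum_{i=1}^3\Bigl(\sum_{K\in\mathcal T_h}\int_{\partial K}\partial_n u_i\,w_{h,i}\Bigr),
$$
so the residual splits into three scalar residuals. The point is that, on each cell, the $i$-th component $w_{h,i}$ of $\bm w_h$ lies in the scalar shape-function space $P_K^{x_i}$ (the $i$-th analogue of $P_K^x$), its single-valued vertex, $x_i$-edge-moment and $x_i$-face-moment values are precisely the inter-element degrees of freedom of the scalar space $V_h$ of Theorem \ref{thm:convergence-poisson}, and the vanishing boundary degrees of freedom of $\bm w_h$ force $w_{h,i}\in V_{h0}$. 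Each scalar residual is then exactly the quantity bounded in the proof of Theorem \ref{thm:convergence-poisson} (inserting cellwise constants, invoking the $C^0$-conformity of the auxiliary space $A_h$ together with Lemma \ref{lem:localstruc}, and using a trace/Bramble--Hilbert estimate), hence it is $\leqslant Ch\|u_i\|_{2,\Omega}\|w_{h,i}\|_{1,h}$; the coordinate symmetry $x\leftrightarrow y\leftrightarrow z$ transfers the $x$-component argument of Theorem \ref{thm:convergence-poisson} verbatim to $i=2,3$. Summing over $i$ and applying the discrete Cauchy--Schwarz inequality gives the consistency bound $Ch\|\bm u\|_{2,\Omega}\|\bm w_h\|_{1,h}$, and combining this with the approximation bound in \eqref{strang} finishes the proof.

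I expect the only real obstacle to be the bookkeeping in the consistency step: checking rigorously that ``take the $i$-th component'' maps $\0{\mathcal A}_2\Lambda^1$ into the scalar space $V_{h0}$ for which the consistency machinery of Theorem \ref{thm:convergence-poisson} was built, i.e.\ that both the local shape spaces $P_K^{x_i}$ and the inter-element continuity agree. Once that is settled, everything else is routine: in particular the four extra interior degrees of freedom of $V_h$ (added only to restore unisolvence, since the face-type functionals alone are not unisolvent for $P_K^x$) play no role in the consistency estimate, which depends solely on the inter-element continuity through $\Pi_h^A$ and Lemma \ref{lem:localstruc}; and the interpolation-error estimates are entirely standard.
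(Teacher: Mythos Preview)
Your argument is correct and is essentially an expanded version of the paper's own proof: both apply the Strang lemma, handle the approximation term via the polynomial content of the local space, and reduce the consistency term componentwise to the scalar estimate of Theorem~\ref{thm:convergence-poisson}. Your write-up is in fact more careful on two points: you note that $[\mathcal P_1(K)]^3\subset\mathcal A_2\Lambda^1(K)$ (which is what the $H^1$ Bramble--Hilbert bound actually needs, whereas the paper only mentions piecewise constants), and you explicitly observe that the four added interior DoFs of $V_h$ are irrelevant to the consistency estimate since that estimate depends only on the inter-element continuity and the local shape space $P_K^{x_i}$---precisely the point the paper glosses over with ``the same local spaces and inter-element continuity.''
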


\begin{remark}
The two dimensional results follow the same way. 
\end{remark}

\section{A minimal complex in two dimensions}\label{sec:minimal}

By the Strang lemma, the errors of solving PDEs using  non-conforming  elements come from two sources: one is the local approximation error and another is the consistency error due to the nonconformity.  In most cases, the consistency error renders only first order convergence. In this respect, higher local polynomial degree does not increase the convergence order of  non-conforming  elements; and {\it serendipity operations for non-conforming elements should be defined not to preserve the full polynomial degree, but to preserve the constants}. From this point of view, we can further simplify the 2D Adini complex by a new serendipity operation. The construction yields a new complex which is conforming for $H^{1}$-$H(\div)$-$L^{2}$, convergent as  non-conforming  elements for $H^{2}$-$[H^{1}]^{2}$-$L^{2}$, and minimal in the sense that no interior bubbles are involved.

\begin{center}
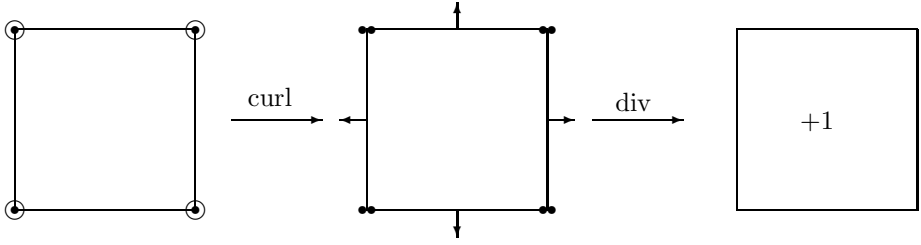
\begin{figure}

\setlength{\unitlength}{1.2cm}
\begin{picture}(2,2)(-2,0)
\put(0,0){
\begin{picture}(2,2)
\put(-1, 0){\line(1,0){2}} 
\put(1, 2){\line(-1,0){2}}
\put(-1,0){\line(0,1){2}}
\put(1, 2){\line(0, -1){2}}
\put(-1,0){\circle*{0.1}}
\put(-1,0){\circle{0.2}}
\put(1.,0){\circle*{0.1}}
\put(1.,0){\circle{0.2}}
\put(-1,2){\circle*{0.1}}
\put(-1,2){\circle{0.2}}
\put(1,2){\circle*{0.1}}
\put(1,2){\circle{0.2}}
\end{picture}
}

\put(1.5, 1){\vector(1, 0){1}}
\put(1.68, 1.15){$\curl$}

\put(4,0){
\put(-1, 0){\line(1,0){2}} 
\put(1, 2){\line(-1,0){2}}
\put(-1,0){\line(0,1){2}}
\put(1, 2){\line(0, -1){2}}
\put(-0.95,0){\circle*{0.1}}
\put(-1.05,0){\circle*{0.1}}
\put(1.05,0){\circle*{0.1}}
\put(0.95,0){\circle*{0.1}}
\put(-0.95,2){\circle*{0.1}}
\put(-1.05,2){\circle*{0.1}}
\put(1.05,2){\circle*{0.1}}
\put(0.95,2){\circle*{0.1}}
\put(0, 2){\vector(0,1){0.3}}
\put(0, 0){\vector(0,-1){0.3}}
\put(-1, 1){\vector(-1,0){0.3}}
\put(1, 1){\vector(1,0){0.3}}
%\put(1, 1){\circle*{0.1}}
%\put(-0.2, 1){\circle*{0.1}}
%\put(0.2, 1){\circle*{0.1}}
\put(-0.3, 0.9){}
}

\put(5.5, 1){\vector(1, 0){1}}
\put(5.75, 1.1){{$\div$}}

\put(8,0){
\begin{picture}(2,2)
\put(-1, 0){\line(1,0){2}} 
\put(1, 2){\line(-1,0){2}}
\put(-1,0){\line(0,1){2}}
\put(1, 2){\line(0, -1){2}}
%\put(0, 1){\circle*{0.1}}
%\put(-0.1, 0.8){\circle*{0.1}}
%\put(0.1, 0.8){\circle*{0.1}}
\put(-0.3, 0.9){+1}
\end{picture}
}

%\put(9, 1){\vector(1, 0){1}}
\end{picture}
\caption{The reduced Adini complex. }
\label{fig:adini2d-reduced}

\end{figure}
\end{center}

%Although reducing interior DoFs is not a trivial task, we can still reduce, for example, the Adini-de Rham family with lowest degree to obtain a simpler family with all desired properties.   
We define the reduced space $\tilde{\mathcal{A}}_{2}\Lambda^{1}$ by 
\begin{align*}
\tilde{\mathcal{A}}_{2}\Lambda^{1}(K):&=\left[\mathcal{P}_{1}(K)\right ]^{2}+\mathrm{span}\left \{\curl(x^{3}), \curl (y^{3}) \curl(x^{2}y), \curl(xy^{2}), \curl(x^{3}y), \curl (xy^{3}) \right \}\\
&=\curl \mathcal{A}_{3}\Lambda^{0}(K)+\mathrm{span}\{(x, 0)^{T}, (0, y)^{T}\},
\end{align*}
and define $\tilde{\mathcal{A}}_{1}\Lambda^{2}$ by
$$
\tilde{\mathcal{A}}_{1}\Lambda^{2}(K):=\mathcal{P}_{0}(K).
$$
The degrees of freedom for $\bm{u}\in \tilde{\mathcal{A}}_{2}\Lambda^{1}$, as shown in Figure  \ref{fig:adini2d-reduced},  are given by
\begin{itemize}
\item
function evaluation at each vertex $x\in \mathcal{V}$,
\item
moments on each edge $e\in \mathcal{E}$:
$$
\int_{e}\bm{u}\cdot \bm{\nu}_{e}\, dx.
$$
\end{itemize}
The degrees of freedom for ${v}\in \tilde{\mathcal{A}}_{3}\Lambda^{2}$ are defined by the moments on each element $K\in \mathcal{F}$:
$$
\int_{K}v\, dx.
$$

\begin{lemma}
The space $\tilde{\mathcal{A}}_{2}\Lambda^{1}$ with the degrees of freedom defined above is unisolvent. 
\end{lemma}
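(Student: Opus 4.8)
The plan is the standard route for unisolvence: check that the dimension of the shape function space equals the number of prescribed degrees of freedom, and then show that any function annihilated by all of them is zero. First I would record the dimension. Using the second description $\tilde{\mathcal{A}}_2\Lambda^1(K)=\curl\mathcal{A}_3\Lambda^0(K)+\operatorname{span}\{(x,0)^T,(0,y)^T\}$ and noting that $(x,-y)^T=\curl(-xy)\in\curl\mathcal{A}_3\Lambda^0(K)$, this space also equals $\curl\mathcal{A}_3\Lambda^0(K)+\operatorname{span}\{(x,y)^T\}$; the sum is direct because every field in $\curl\mathcal{A}_3\Lambda^0(K)$ is divergence-free while $\div(x,y)^T=2$. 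Since $\curl$ has exactly the constants as kernel on $\mathcal{A}_3\Lambda^0(K)=\mathcal{S}_3\Lambda^0(K)$, whose dimension is $12$, we get $\dim\tilde{\mathcal{A}}_2\Lambda^1(K)=(12-1)+1=12$, matching the count of DoFs: two per vertex ($8$) plus one per edge ($4$). So it remains to show that if $\bm u\in\tilde{\mathcal{A}}_2\Lambda^1(K)$ satisfies $\bm u(a)=0$ at each vertex $a$ and $\int_e\bm u\cdot\bm\nu_e=0$ on each edge $e$, then $\bm u=0$.

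The first real step is to deduce that $\bm u$ has vanishing normal trace on $\partial K$. Write $\bm u=\curl\phi+c\,(x,y)^T$ with $\phi\in\mathcal{S}_3\Lambda^0(K)$ and $c\in\mathbb{R}$. On an edge $e$ one has $\bm u\cdot\bm\nu_e=\pm\partial_{\tau_e}(\phi|_e)+c\,\big((x,y)^T\cdot\bm\nu_e\big)$; the second summand is constant on $e$, and $\phi|_e$ is a univariate polynomial of degree at most $3$ — this holds for $\mathcal{P}_3$ and, on each of the four edges, for the two extra generators $x^3y$ and $xy^3$ — so $\bm u\cdot\bm\nu_e|_e$ has degree at most $2$ in the edge parameter. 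This quadratic vanishes at the two endpoints of $e$ (since $\bm u$ vanishes at the vertices) and has zero mean over $e$ (the edge DoF); a polynomial of degree $\le 2$ on an interval with these three properties is identically zero. Hence $\bm u\cdot\bm n=0$ on all of $\partial K$.

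The second step recovers $\bm u=0$ from the vanishing normal trace together with the structure of the space. By the divergence theorem, $0=\int_{\partial K}\bm u\cdot\bm n\,ds=\int_K\div\bm u\,dx=2c\,|K|$, so $c=0$ and $\bm u=\curl\phi$; then $\partial_{\tau_e}\phi=\pm\bm u\cdot\bm\nu_e=0$ on every edge, so $\phi$ is constant on the connected set $\partial K$. After subtracting that constant, which does not change $\curl\phi$, we have $\phi\in\mathcal{S}_3\Lambda^0(K)\cap H^1_0(K)$. The last ingredient is that this intersection is trivial: a polynomial vanishing on all four edges of the square is divisible by $(x^2-1)(y^2-1)$ (up to affine rescaling), and any nonzero polynomial multiple of $(x^2-1)(y^2-1)$ contains a monomial $x^ay^b$ with $a,b\ge2$, hence has superlinear degree $\ge 4>3$ and cannot belong to $\mathcal{S}_3$. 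Therefore $\phi$ is constant and $\bm u=\curl\phi=0$, which finishes the argument.

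I expect the load-bearing point to be the degree bound in the first step: the fact that the normal component of a field in $\tilde{\mathcal{A}}_2\Lambda^1(K)$ restricts to a quadratic on each edge is exactly what lets the three edge-related conditions (two vertex values and one mean) determine it, and hence what makes this reduced element unisolvent with no interior DoFs. The divergence-theorem computation forcing $c=0$ and the absence of interior bubbles in $\mathcal{S}_3\Lambda^0(\square_2)$ are essentially routine, but the latter should be verified carefully, since it is precisely the ``minimality'' feature that the construction relies on.
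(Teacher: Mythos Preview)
Your proof is correct and follows the same overall strategy as the paper's: match dimensions, then show vanishing DoFs force $\bm u=\curl\phi$ with $\phi$ constant. The execution differs in two small but interesting ways. First, you prove the stronger fact that $\bm u\cdot\bm\nu_e$ vanishes \emph{identically} on each edge (via the ``quadratic with two zeros and zero mean'' argument), whereas the paper only uses the weaker consequence $\int_{\partial K}\bm u\cdot\bm\nu=0$ together with the observation that $\div$ maps the space to constants; this already yields $\div\bm u=0$ and hence $\bm u=\curl\phi$. Second, to kill $\phi$, the paper reads off from the vertex DoFs that $\nabla\phi$ vanishes at the vertices and from the edge DoFs (fundamental theorem of calculus along each edge) that the four vertex values of $\phi$ coincide, and then simply invokes unisolvence of the Adini element $\mathcal A_3\Lambda^0$; you instead use your stronger edge conclusion to get $\phi$ constant on $\partial K$ and then argue directly that $\mathcal S_3\Lambda^0(\square_2)\cap H^1_0(\square_2)=\{0\}$. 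Your route is more self-contained (it does not appeal to Adini unisolvence as a known fact), while the paper's is a touch more economical since it never needs the edge-trace degree bound.
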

\begin{proof}
The local shape function space and the set of degrees of freedom have the same dimension. Therefore we only show that if all the degrees of freedom vanish on a function $\bm{u}\in \tilde{\mathcal{A}}_{2}\Lambda^{1}$, then $\bm{u}=0$. Actually, if all the degrees of freedom vanish, then we have $\displaystyle\int_{\partial K}\bm{u}\cdot\bm{\nu}_{\partial K}\, dx=0, \, \forall K\in \mathcal{F}$. By Stokes' theorem, this implies that $\displaystyle\int_{K}\div \bm{u}\, dx=0$ and further $\div \bm{u}=0$ since the image of $\div$ on $\tilde{\mathcal{A}}_{2}\Lambda^{1}$ is the space of constants. Then $\bm{u}$ can be expressed as $\bm{u}=\curl \phi$ by some potential $\phi\in \mathcal{A}_{3}\Lambda^{0}(K)$. Since $\bm{u}$ vanishes at vertices, $\phi$ have vanishing first order derivatives at vertices. Moreover, the function values of $\phi$ at the four vertices have to be equal, by the edge degrees of freedom and Stokes' theorem on edges. The potential $\phi$ is determined up to a constant, so we are allowed to choose this common vertex value to be zero. So $\phi=0$ and therefore $\bm{u}=0$.
\end{proof}

\begin{comment}
\begin{proof}
The local shape function space and the set of degrees of freedom have the same dimension. Therefore we only show that if all the degrees of freedom vanish on a function $\bm{u}\in \tilde{\mathcal{A}}_{2}\Lambda^{1}$, then $\bm{u}=0$.   Actually, we have $\tilde{\mathcal{A}}_{2}\Lambda^{1}(\mathcal{T}_{h}^{2})\subset \mathcal{A}_{2}\Lambda^{1}(\mathcal{T}_{h}^{2})$. By the same argument as the unisolvence of $\mathcal{A}_{1}\Lambda^{2}$, we see that 
$$
\bm{u}=c_{1}\left (
\begin{array}{c}
x(1-x)\\
 0
 \end{array}
  \right )+c_{2}\left ( 
  \begin{array}{c}
  0\\
  y(1-y)
  \end{array}
  \right ), 
$$
where $\left (x(1-x), 0 \right )^{T}$ and $ \left ( 0, y(1-y)\right )^{T}$ are the two bubbles of $\mathcal{A}_{1}\Lambda^{2}$ and $c_{1}$, $c_{2}$ are two constants. Since these two bubbles are not included in the reduced space $\tilde{\mathcal{A}}_{2}\Lambda^{1}$, we have $c_{1}=c_{2}=0$. This proves the unisolvence. 
\end{proof}
\end{comment}

We verify the following properties.
\begin{lemma}
The reduced spaces satisfy
\begin{itemize}
\item
$
\curl  \mathcal{A}_{3}\Lambda^{0}(\mathcal{T}_{h}^{2}) \subset \tilde{\mathcal{A}}_{2}\Lambda^{1}(\mathcal{T}_{h}^{2}) $,
\item $\div \tilde{\mathcal{A}}_{2}\Lambda^{1}(\mathcal{T}_{h}^{2})  \subset \tilde{\mathcal{A}}_{1}\Lambda^{2}(\mathcal{T}_{h}^{2})$,
\item
the complex
\begin{align}\label{complex:reduced}
\begin{diagram}
0 & \rTo &\mathbb{R} & \rTo & \mathcal{A}_{3}\Lambda^{0}(\mathcal{T}_{h}^{2}) & \rTo^{\curl} &\tilde{\mathcal{A}}_{2}\Lambda^{1}(\mathcal{T}_{h}^{2}) & \rTo^{\div} &\tilde{\mathcal{A}}_{1}\Lambda^{2}(\mathcal{T}_{h}^{2})& \rTo^{} & 0
\end{diagram}
\end{align}
is exact.
\end{itemize}
\end{lemma}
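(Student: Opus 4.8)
The lemma bundles three claims: two inclusion statements and the exactness of the complex \eqref{complex:reduced}. I would dispatch the two inclusions first and then assemble exactness, since the inclusions are precisely the "$d^2=0$" facts plus the identification of images needed to control the cohomology.

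\textbf{Step 1: The inclusion $\curl\,\mathcal{A}_3\Lambda^0(\mathcal{T}_h^2)\subset\tilde{\mathcal{A}}_2\Lambda^1(\mathcal{T}_h^2)$.} Locally this is visible from the very definition $\tilde{\mathcal{A}}_2\Lambda^1(K)=\curl\,\mathcal{A}_3\Lambda^0(K)+\spn\{(x,0)^T,(0,y)^T\}$, which contains $\curl\,\mathcal{A}_3\Lambda^0(K)$ by construction. What also must be checked is that a globally $C^1$-at-vertices function in $\mathcal{A}_3\Lambda^0(\mathcal{T}_h^2)$ maps under $\curl$ to a function lying in the \emph{global} space $\tilde{\mathcal{A}}_2\Lambda^1(\mathcal{T}_h^2)$, i.e.\ that the inter-element continuity is respected: $\curl=(-\partial_2,\partial_1)^T$ turns vertex values of $\grad\phi$ into vertex values of $\curl\phi$, so $C^1$ continuity of $\phi$ at a vertex gives continuity of $\curl\phi$ at that vertex, matching the vertex DoFs of $\tilde{\mathcal{A}}_2\Lambda^1$; and the edge normal moment $\int_e\curl\phi\cdot\bm\nu_e$ equals a tangential-derivative moment $\int_e\partial_{\tau}\phi$, which is single-valued because $\phi$ is continuous along $e$. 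So the image sits in the global space.

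\textbf{Step 2: The inclusion $\div\,\tilde{\mathcal{A}}_2\Lambda^1(\mathcal{T}_h^2)\subset\tilde{\mathcal{A}}_1\Lambda^2(\mathcal{T}_h^2)$.} I would compute $\div$ of each generator: $\div\,\curl\equiv 0$ kills the $\curl\,\mathcal{A}_3\Lambda^0(K)$ part, while $\div(x,0)^T=1$ and $\div(0,y)^T=1$, so the image of $\div$ on each $K$ lies in $\mathcal{P}_0(K)=\tilde{\mathcal{A}}_1\Lambda^2(K)$. Globally $\tilde{\mathcal{A}}_1\Lambda^2$ is piecewise $\mathcal{P}_0$ with no inter-element constraints, so there is nothing further to verify. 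Note this step also records $\div\,\tilde{\mathcal{A}}_2\Lambda^1$ is exactly the constants, a fact already used in the unisolvence proof.

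\textbf{Step 3: Exactness of \eqref{complex:reduced}.} Given Steps 1--2 we have a genuine complex. I would argue exactness in the style of Lemma \ref{lem:serendipity-exact}: present \eqref{complex:reduced} as the bottom row of a diagram whose middle row is the (exact) Adini complex \eqref{2d-adini-complex} with $r=1$, i.e.\ $\mathbb{R}\to\mathcal{A}_3\Lambda^0\to\mathcal{A}_2\Lambda^1\to\mathcal{A}_1\Lambda^2\to 0$, and whose columns are the reduction maps; the left column is the identity on $\mathcal{A}_3\Lambda^0$, the middle column is the (surjective) projection $\mathcal{A}_2\Lambda^1\to\tilde{\mathcal{A}}_2\Lambda^1$ obtained by discarding the two interior bubbles, and the right column is the surjection $\mathcal{A}_1\Lambda^2\to\tilde{\mathcal{A}}_1\Lambda^2$ given by the $L^2$-projection onto piecewise constants. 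One checks this diagram commutes with the differentials (for the middle/right columns this is the compatibility $\mathscr{Q}\,\div=\div\,\mathscr{Q}$ between bubble reduction and $\div$, which holds because the bubbles $(x(1-x),0)^T$, $(0,y(1-y))^T$ map under $\div$ to linear functions whose projection onto constants matches the prescribed reduction). The kernel sequence — the top row — is then $0\to 0\to\ker(\mathscr{Q}_1)\to\ker(\mathscr{Q}_2)\to 0$, where $\ker(\mathscr{Q}_1)=\spn\{(x(1-x),0)^T,(0,y(1-y))^T\}$ on each element (these are interior bubbles, vanishing at vertices and with zero edge normal moments) and $\ker(\mathscr{Q}_2)=\spn\{\text{nonconstant part of }\mathcal{P}_1\}$ on each element; $\div$ maps the two bubbles to $1-2x$ and $1-2y$, which span the nonconstant linear functions, so the kernel sequence is exact. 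By the general homological lemma (Lemma \ref{lem:serendipity-exact}, equivalently the snake-lemma argument of \cite{christiansen2011topics}), exactness of the middle row plus exactness of the top row forces exactness of the bottom row \eqref{complex:reduced}.

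\textbf{Anticipated main obstacle.} The routine-looking parts are Steps 1--2; the real content is verifying that the vertical reduction maps in Step 3 commute with $\div$ and that the kernel sequence is exact. The subtlety is that $\tilde{\mathcal{A}}_2\Lambda^1$ is \emph{not} simply obtained from $\mathcal{A}_2\Lambda^1$ by a clean serendipity operation of the earlier type — it deletes the two genuine interior bubbles whose divergences are \emph{not} constant, so the compatible reduction on the 2-form side must be the projection onto constants rather than the identity, and one must confirm the kernel-sequence map $\div:\ker(\mathscr{Q}_1)\to\ker(\mathscr{Q}_2)$ is an isomorphism onto the two-dimensional complement of constants in $\mathcal{P}_1$. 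Once that dimension count and the explicit images $1-2x$, $1-2y$ are in hand, the homological machinery closes the argument; if one prefers to avoid the diagram, the same conclusion follows directly by a Euler-characteristic count together with the local exactness already established (the complex $\mathcal{A}_3\Lambda^0(K)\to\tilde{\mathcal{A}}_2\Lambda^1(K)\to\mathcal{P}_0(K)$ is locally exact), combined with the globally established surjectivity of $\div$ onto piecewise constants and injectivity of $\mathbb{R}\to\mathcal{A}_3\Lambda^0$.
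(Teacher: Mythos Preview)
Your proposal is correct, but it works considerably harder than the paper does. Steps~1 and~2 match the paper's ``trivial by definition'' verdict on the inclusions, though you spell out the global continuity check that the paper leaves implicit. The real divergence is in Step~3: you set up the full serendipity-reduction diagram of Lemma~\ref{lem:serendipity-exact}, identify the kernel sequence, and invoke the snake-type lemma. The paper instead observes directly that since the $0$-form space $\mathcal{A}_3\Lambda^0$ is \emph{unchanged} in the reduction and $\tilde{\mathcal{A}}_2\Lambda^1\subset\mathcal{A}_2\Lambda^1$, exactness at the middle slot is inherited for free from the Adini complex~\eqref{2d-adini-complex}: if $\bm u\in\tilde{\mathcal{A}}_2\Lambda^1$ has $\div\bm u=0$, then $\bm u\in\mathcal{A}_2\Lambda^1$ with $\div\bm u=0$, hence $\bm u\in\curl\mathcal{A}_3\Lambda^0$ by Adini exactness. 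That is the whole argument.

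What each approach buys: the paper's one-line trick exploits the specific structural feature that the reduction touches only the $1$- and $2$-form spaces, so no diagram chase is needed; it is slicker but tacitly leaves surjectivity of $\div$ onto piecewise constants (exactness at the last slot) to the reader. Your diagram argument is heavier machinery but is self-contained and handles all three positions uniformly, and it would generalize better to situations where the $0$-form space is also reduced. Your ``anticipated main obstacle'' about commutativity of $\mathscr{Q}$ with $\div$ is a genuine technical point in your route---and you resolve it correctly---but the paper simply sidesteps it by never needing the diagram at all.
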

\begin{proof}
The inclusions are trivial by definition.  To show the exactness, we observe that if $\bm{u}\in \tilde{\mathcal{A}}_{2}\Lambda^{1}(\mathcal{T}_{h}^{2}) $ satisfies $\div\bm{u}=0$, then $\bm{u}$ has to be in $\curl \mathcal{A}_{3}\Lambda^{0}(\mathcal{T}_{h}^{2}) $ by 
the exactness of the Adini complex \eqref{2d-adini-complex} and the fact $\tilde{\mathcal{A}}_{2}\Lambda^{1}\subset {\mathcal{A}}_{2}\Lambda^{1}$.
\end{proof}

The local spaces of the reduced complex \eqref{complex:reduced} can also be constructed by the Poincar\'{e}/Koszul operators \cite{hiptmair1999canonical,Arnold.D;Falk.R;Winther.R.2006a,christiansen2016generalized}: 
$$
\tilde{\mathcal{A}}_{2}\Lambda^{1}(K)=\curl\tilde{\mathcal{A}}_{3}\Lambda^{0}(K)+\kappa \tilde{\mathcal{A}}_{1}\Lambda^{2}(K), \quad \forall K\in \mathcal{F},
$$
where $\kappa: v\mapsto v\bm{x}$ maps a scalar function to a vector valued function by multiplying $\bm{x}$ with a chosen origin. 
This also implies the local exactness and the approximation property that piecewise linear functions and piecewise constants are contained in the last two spaces respectively.  

Since the reduced space $\tilde{\mathcal{A}}_{2}\Lambda^{1}(\mathcal{T}_{h}^{2})$ is a subspace of ${\mathcal{A}}_{2}\Lambda^{1}(\mathcal{T}_{h}^{2})$ with vertex continuities, we have the discrete Korn inequality and the convergence as a non-conforming element.
\begin{theorem}
The reduced space $\tilde{\mathcal{A}}_{2}\Lambda^{1}(\mathcal{T}_{h}^{2})$ is $H(\div)$-conforming, satisfies the discrete Korn inequality stated in Theorem \ref{thm:korn} and leads to convergent schemes as a non-conforming element for Poisson type equations (as Theorem \ref{thm:convergence-poisson}). 
\end{theorem}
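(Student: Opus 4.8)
The statement bundles three assertions; the plan is to reduce each of them to facts already at hand, using that $\tilde{\mathcal{A}}_{2}\Lambda^{1}\subset\mathcal{A}_{2}\Lambda^{1}$ and that the passage from $\mathcal{A}_{2}\Lambda^{1}$ to $\tilde{\mathcal{A}}_{2}\Lambda^{1}$ deletes only interior degrees of freedom, hence does not weaken the inter-element continuity.

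First I would establish $H(\div)$-conformity by checking that the normal component $\bm{u}\cdot\bm{\nu}_{e}$ is single valued across each edge $e$. On $e$ the normal trace of any $\bm{u}\in\tilde{\mathcal{A}}_{2}\Lambda^{1}(\square_2)$ is a quadratic polynomial — on a horizontal edge, for instance, the second components of $[\mathcal{P}_{1}(\square_2)]^{2}$ and of $\curl(x^{3})$ already span $\{1,x,x^{2}\}$ — and a quadratic on $e$ is determined by the two endpoint values of the relevant component of $\bm{u}$ together with the single moment $\int_{e}\bm{u}\cdot\bm{\nu}_{e}$, which are precisely the degrees of freedom of $\tilde{\mathcal{A}}_{2}\Lambda^{1}$ attached to $e$ and its vertices, hence shared by the two adjacent elements. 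Equivalently, $\tilde{\mathcal{A}}_{2}\Lambda^{1}\subset\mathcal{A}_{2}\Lambda^{1}=\mathcal{S}_{2}\Lambda^{1}$, the $H(\div)$-conforming cubical BDM element, and the reduced space retains all of the vertex and edge DoFs that pin down its normal trace. This also reproves $\div\tilde{\mathcal{A}}_{2}\Lambda^{1}(\mathcal{T}_{h}^{2})\subset\tilde{\mathcal{A}}_{1}\Lambda^{2}(\mathcal{T}_{h}^{2})$ globally.

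For the discrete Korn inequality I would verify hypotheses (H1) and (H2) of Theorem~\ref{thm:korn} for $V_{h}=\tilde{\mathcal{A}}_{2}\Lambda^{1}(\mathcal{T}_{h}^{2})$. Condition (H1) holds with $r=3$ since the local shape functions lie in $[\mathcal{P}_{1}(\square_2)]^{2}+\curl\,\mathcal{S}_{3}\Lambda^{0}(\square_2)$, a space of polynomials of degree at most $3$. Condition (H2) holds because function evaluation at the vertices is among the DoFs, so every $v\in V_{h}$ is continuous at all four vertices of each element; any edge $F$ contains two of them, which are two distinct points, hence not contained in a common $0$-dimensional affine subspace, and the vertex set is affine invariant. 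Theorem~\ref{thm:korn} then yields the three discrete Korn inequalities. Alternatively, since $\tilde{\mathcal{A}}_{2}\Lambda^{1}\subset\mathcal{A}_{2}\Lambda^{1}$ and the boundary-zero subspace of the former sits inside that of the latter, the inequalities can be inherited directly from the statement in Section~\ref{sec:korn} for the Adini $1$-form.

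For convergence of the nonconforming scheme for Poisson-type problems I would run the Strang lemma exactly as in the proof of Theorem~\ref{thm:convergence-poisson}, via the two-dimensional analogues of the spaces and interpolations used there. The approximation term is $O(h)\|u\|_{2,\Omega}$ because $[\mathcal{P}_{1}(K)]^{2}\subset\tilde{\mathcal{A}}_{2}\Lambda^{1}(K)$ — equivalently $\tilde{\mathcal{A}}_{2}\Lambda^{1}(K)=\curl\tilde{\mathcal{A}}_{3}\Lambda^{0}(K)+\kappa\tilde{\mathcal{A}}_{1}\Lambda^{2}(K)$ — so the canonical interpolation is first-order accurate in the broken $H^{1}$ norm for $u\in H^{2}$. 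For the consistency term, the key observation is that, component-wise, $\tilde{\mathcal{A}}_{2}\Lambda^{1}$ is a subspace of the scalar nonconforming space $V_{h}$ (with local space $P_{K}^{x}$) analyzed in Theorem~\ref{thm:convergence-poisson}, carrying the \emph{same} inter-element continuity — vertex values and normal-edge moments — because the reduction removed only interior DoFs. Hence the consistency supremum $\sup_{v_{h}}|(\nabla u,\nabla_{h}v_{h})-(f,v_{h})|/\|v_{h}\|_{1,h}$ for $\tilde{\mathcal{A}}_{2}\Lambda^{1}$ is bounded above by the one already estimated, namely $O(h)\|u\|_{2,\Omega}$; concretely one subtracts the $C^{0}$-conforming interpolant $\Pi_{h}^{A}v_{h}$ and invokes the two-dimensional analogue of Lemma~\ref{lem:localstruc}, using that the relevant jump $\llbracket v_{h}\rrbracket$ has vanishing edge moment. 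Adding the two contributions gives $\|u-u_{h}\|_{1,h}\le Ch\|u\|_{2,\Omega}$. The load-bearing step here is the consistency estimate: the real work is confirming the $2$D counterpart of the auxiliary $C^{0}$-conforming space $A_{h}$ and of Lemma~\ref{lem:localstruc}, i.e.\ that $\int_{\partial K}(v-\Pi_{A}v)=0$ for $v$ in the local shape space, so that after subtracting a piecewise-$C^{0}$ interpolant the face contributions telescope and can be controlled by $\|\partial_{n}u-C_{K}\|_{\partial K}\,\|v_{h}\|_{1,h}$; everything else is degree/dimension bookkeeping, a vertex-continuity remark, or the trivial monotonicity of suprema and norms under passing to a subspace.
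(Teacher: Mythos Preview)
Your proposal is correct and follows the same approach as the paper. The paper's own justification is the single sentence preceding the theorem---that $\tilde{\mathcal{A}}_{2}\Lambda^{1}$ is a subspace of $\mathcal{A}_{2}\Lambda^{1}$ with vertex continuity---and you have simply unpacked this: checking the quadratic normal trace for $H(\div)$-conformity, verifying (H1)--(H2) for Korn, and invoking $[\mathcal{P}_{1}]^{2}\subset\tilde{\mathcal{A}}_{2}\Lambda^{1}$ for approximation together with the subspace monotonicity of the consistency supremum (already stated in the paper after the Strang lemma) for convergence.
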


A similar diagram as \eqref{3d-diagram-serendipity} also exists for this reduced and DoF-transferred complex.

%%%%%%%%%%%%%%%%%%%%%%%%%%%%%%%%%%%%%%%%%%%%%%%%%%%

\section{Conclusions and future directions}\label{sec:conclusion}

The DoF-transfer operation proposed in this paper increases the regularity of shape functions at vertices, which reduces global DoFs and leads to convergent  non-conforming  elements. On the other hand, the serendipity operation reduces local DoFs. Both of these operations preserve the unisolvence and exactness of the discrete complex on contractible domains.
Based on the two operations and the serendipity finite element families on cubes \cite{arnold2014finite,GK2016}, we constructed several complexes with vertex DoFs and fewer local and global DoFs than standard tensor product elements. 

There is always a trade-off between nodal (nonstandard) and non-nodal (standard) elements. For $H(\div)$, hybridization is possible for the standard face elements, and these standard elements are better studied from the perspective of solvers and software. On the other hand, nodal elements have fewer global DoFs and sometimes nodal-type interpolations admit a canonical construction of bases with moderate condition numbers, sparsity and rotational symmetry (c.f. \cite{christiansen2016nodal}). In summary, the seemingly simple DoF-transfer operation changes the topology of the element connections, which in turn give new opportunities for the study of bases, solvers and applications. %These are relevant issues to be further explored.  

We have seen that the DoF-transfer operation on cubical meshes often renders  non-conforming  convergence. Therefore, in terms of  non-conforming  plate or Stokes elements, the nodal continuity seems natural and necessary when we require a discrete sequence that is both conforming as a de Rham complex and  non-conforming  (convergent) as a Stokes complex. In contrast, another well known complex of  non-conforming  elements, consisting of the Morley and the Crouzeix-Raviart elements, is not conforming as a de Rham complex~\cite{brenner2015forty,linke2018quasi}.

The DoF-transfer operation can also be applied to other element shapes. 
The discussions in \cite{christiansen2016nodal} for nodal elements are only focused on finite elements with complete polynomial shape spaces. The general approach proposed in this paper yields some new elements and complexes on simplicial meshes, at least for the family with regularity 1 in \cite{christiansen2016nodal}.  For example, we can modify a 2D Lagrange $\rightarrow$ Raviart-Thomas $\rightarrow$ DG complex by the DoF-transfer operation to obtain a Stenberg type element with Raviart-Thomas shape functions, which is continuous at vertices. This complex is also exact on contractible domains and from the viewpoint of differential complexes, we avoid the use of macroelement techniques.

On simplicial meshes, complexes starting with the modified Zienkiewicz element \cite{Wang.M;Shi.Z;Xu.J2007NM} may also be explored as an analogue of the discussions in this paper. 

Figure \ref{fig:adini2d-geometry} shows a combination of cubical and simplicial elements. The edge modes of both families are the same. This renders a possibility for flexible meshes and varying polynomial degrees in high order methods and adaptivity.

As explained in \cite{christiansen2016nodal}, the 1-forms in the 2D Hermite complex can be represented by nodal basis functions. For the standard serendipity element (0-forms in the Adini complex), nodal basis functions have been studied in \cite{FG2014}. Therefore the constructions in this paper could also shed some light on computational issues, especially for high order methods. We regard this respect as a further direction. 

The DoF-transfer operation is defined as changing vertex and edge degrees of freedom. In regard to the discussions in \cite{christiansen2016nodal}, this corresponds to the case $\ell=1$ where $\ell$ is a regularity index (denoted as $r=1$ in \cite{christiansen2016nodal}). More general DoF-transfer operations can be explored to cover the family $\ell=2$ and other global space reductions.

\section*{Acknowledgements}

AG was supported in part by National Science Foundation Award DMS-1522289.
KH was supported in part by the European Research Council under the European Union's Seventh Framework Programme (FP7/2007-2013) / ERC grant agreement 339643. 
SZ was supported in part by the National Natural Science Foundation of China with Grant No. 11471026.

\begin{figure}
\setlength{\unitlength}{1.2cm}
\begin{picture}(2,4)(-2,0)
\put(0,0){
\begin{picture}(2,2)
\put(-1, 0){\line(1,0){2}} 
\put(1, 2){\line(-1,0){2}}
\put(-1,0){\line(0,1){2}}
\put(1, 2){\line(0, -1){2}}
\put(-1,0){\circle*{0.1}}
\put(-1,0){\circle{0.2}}
\put(1.,0){\circle*{0.1}}
\put(1.,0){\circle{0.2}}
\put(-1,2){\circle*{0.1}}
\put(-1,2){\circle{0.2}}
\put(1,2){\circle*{0.1}}
\put(1,2){\circle{0.2}}
\put(0,4){\circle*{0.1}}
\put(0,2.5){\circle*{0.1}}
\put(0,4){\circle{0.2}}
\put(-1, 2){\line(1,2){1}}
\put(1, 2){\line(-1,2){1}}
\end{picture}
}

\put(1.5, 1){\vector(1, 0){1}}
\put(1.68, 1.15){$\curl$}

\put(4,0){
\put(-1, 0){\line(1,0){2}} 
\put(1, 2){\line(-1,0){2}}
\put(-1,0){\line(0,1){2}}
\put(1, 2){\line(0, -1){2}}
\put(-0.95,0){\circle*{0.1}}
\put(-1.05,0){\circle*{0.1}}
\put(1.05,0){\circle*{0.1}}
\put(0.95,0){\circle*{0.1}}
\put(-0.95,2){\circle*{0.1}}
\put(-1.05,2){\circle*{0.1}}
\put(1.05,2){\circle*{0.1}}
\put(0.95,2){\circle*{0.1}}
\put(0.05,4){\circle*{0.1}}
\put(-0.05,4){\circle*{0.1}}
\put(0, 2){\vector(0,1){0.3}}
\put(0, 0){\vector(0,-1){0.3}}
\put(-1, 1){\vector(-1,0){0.3}}
\put(1, 1){\vector(1,0){0.3}}
\put(-0.5, 3){\vector(-1,1){0.3}}
\put(0.5, 3){\vector(1,1){0.3}}
\put(-1, 2){\line(1,2){1}}
\put(1, 2){\line(-1,2){1}}
\put(-0.3,2.5){+3}
%\put(1, 1){\circle*{0.1}}
%\put(-0.2, 1){\circle*{0.1}}
%\put(0.2, 1){\circle*{0.1}}
\put(-0.3, 0.9){+2}
}

\put(5.5, 1){\vector(1, 0){1}}
\put(5.75, 1.1){{$\div$}}

\put(8,0){
\begin{picture}(2,2)
\put(-1, 0){\line(1,0){2}} 
\put(1, 2){\line(-1,0){2}}
\put(-1,0){\line(0,1){2}}
\put(1, 2){\line(0, -1){2}}
\put(-1, 2){\line(1,2){1}}
\put(1, 2){\line(-1,2){1}}
\put(-0.3,2.5){+3}
\put(-0.3, 0.9){+3}
\end{picture}
}

%\put(9, 1){\vector(1, 0){1}}
\end{picture}
\caption{ Adini-Hermite-de Rham complex on flexible geometries}
\label{fig:adini2d-geometry}
\end{figure}
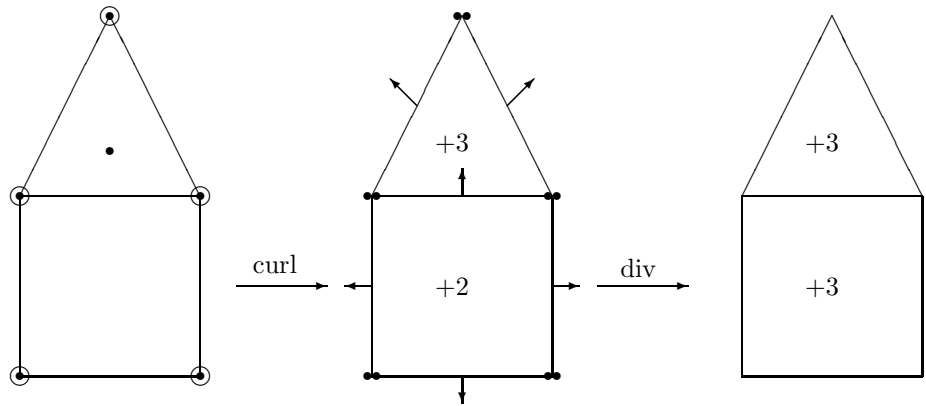

%%%%%%%%%%%%%%%%%
\bibliographystyle{siam}      % mathematics and physical sciences
\bibliography{cube}{}   % name your BibTeX data base

\begin{thebibliography}{10}

\bibitem{adini}
{\sc A.~Adini and R.~W. Clough}, {\em Analysis of plate bending by the finite
  element method}.
\newblock NSF Report G. 7337, University of California, Berkeley, CA, 1961.

\bibitem{arnold2014finite}
{\sc D.~Arnold and G.~Awanou}, {\em Finite element differential forms on
  cubical meshes}, Mathematics of Computation, 83 (2014), pp.~1551--1570.

\bibitem{arnold2011serendipity}
{\sc D.~N. Arnold and G.~Awanou}, {\em The serendipity family of finite
  elements}, Foundations of Computational Mathematics, 11 (2011), pp.~337--344.

\bibitem{arnold2015finite}
{\sc D.~N. Arnold, D.~Boffi, and F.~Bonizzoni}, {\em Finite element
  differential forms on curvilinear cubic meshes and their approximation
  properties}, Numerische Mathematik, 129 (2015), pp.~1--20.

\bibitem{Arnold.D;Falk.R;Winther.R.2006a}
{\sc D.~N. Arnold, R.~S. Falk, and R.~Winther}, {\em {Finite element exterior
  calculus, homological techniques, and applications}}, Acta numerica, 15
  (2006), p.~1.

\bibitem{Arnold.D;Falk.R;Winther.R.2010a}
\leavevmode\vrule height 2pt depth -1.6pt width 23pt, {\em {Finite element
  exterior calculus: from Hodge theory to numerical stability}}, Bulletin of
  the American Mathematical Society, 47 (2010), pp.~281--354.

\bibitem{arnold2014periodic}
{\sc D.~N. Arnold and A.~Logg}, {\em {Periodic table of the finite elements}},
  SIAM News, 47 (2014).

\bibitem{arnold1992quadratic}
{\sc D.~N. Arnold and J.~Qin}, {\em {Quadratic velocity/linear pressure Stokes
  elements}}, Advances in computer methods for partial differential equations,
  7 (1992), pp.~28--34.

\bibitem{Boffi.D;Brezzi.F;Fortin.M.2013a}
{\sc D.~Boffi, F.~Brezzi, and M.~Fortin}, {\em {Mixed Finite Element Methods
  and Applications}}, Springer, 2013.

\bibitem{bott2013differential}
{\sc R.~Bott and L.~W. Tu}, {\em {Differential forms in algebraic topology}},
  vol.~82, Springer Science \& Business Media, 2013.

\bibitem{brenner2015forty}
{\sc S.~C. Brenner}, {\em {Forty Years of the Crouzeix-Raviart element}},
  Numerical Methods for Partial Differential Equations, 31 (2015),
  pp.~367--396.

\bibitem{brenner2007mathematical}
{\sc S.~C. Brenner and R.~Scott}, {\em {The mathematical theory of finite
  element methods}}, vol.~15, Springer Science \& Business Media, 2007.

\bibitem{brezzi1985two}
{\sc F.~Brezzi, J.~Douglas~Jr, and L.~D. Marini}, {\em {Two families of mixed
  finite elements for second order elliptic problems}}, Numerische Mathematik,
  47 (1985), pp.~217--235.

\bibitem{buffa2011isogeometric}
{\sc A.~Buffa, J.~Rivas, G.~Sangalli, and R.~V{\'a}zquez}, {\em Isogeometric
  discrete differential forms in three dimensions}, SIAM Journal on Numerical
  Analysis, 49 (2011), pp.~818--844.

\bibitem{christiansen2016nodal}
{\sc S.~H. Christiansen, J.~Hu, and K.~Hu}, {\em {Nodal finite element de Rham
  complexes}}, Numerische Mathematik,  (2016), pp.~1--36.

\bibitem{christiansen2016generalized}
{\sc S.~H. Christiansen and K.~Hu}, {\em {Generalized Finite Element Systems
  for smooth differential forms and Stokes problem}}, arXiv preprint
  arXiv:1605.08657,  (2016).

\bibitem{christiansen2011topics}
{\sc S.~H. Christiansen, H.~Z. Munthe-Kaas, and B.~Owren}, {\em {Topics in
  structure-preserving discretization}}, Acta Numerica, 20 (2011), pp.~1--119.

\bibitem{da2016serendipity}
{\sc L.~B. Da~Veiga, F.~Brezzi, L.~Marini, and A.~Russo}, {\em {Serendipity
  nodal VEM spaces}}, Computers \& Fluids, 141 (2016), pp.~2--12.

\bibitem{da2016serendipity2}
{\sc L.~B. Da~Veiga, F.~Brezzi, L.~D. Marini, and A.~Russo}, {\em {Serendipity
  face and edge VEM spaces}}, arXiv preprint arXiv:1606.01048,  (2016).

\bibitem{dummit2004abstract}
{\sc D.~S. Dummit and R.~M. Foote}, {\em Abstract algebra}, vol.~3, Wiley
  Hoboken, 2004.

\bibitem{falk2013stokes}
{\sc R.~S. Falk and M.~Neilan}, {\em Stokes complexes and the construction of
  stable finite elements with pointwise mass conservation}, SIAM Journal on
  Numerical Analysis, 51 (2013), pp.~1308--1326.

\bibitem{FG2014}
{\sc M.~Floater and A.~Gillette}, {\em Nodal bases for the serendipity family
  of finite elements}, Foundations of Computational Mathematics, 17 (2017),
  pp.~879--893.

\bibitem{GK2016}
{\sc A.~Gillette and T.~Kloefkorn}, {\em Trimmed serendipity finite element
  differential forms}, Mathematics of Computation, to appear (2016).

\bibitem{hiptmair1999canonical}
{\sc R.~Hiptmair}, {\em Canonical construction of finite elements}, Mathematics
  of Computation of the American Mathematical Society, 68 (1999),
  pp.~1325--1346.

\bibitem{Hiptmair.R.2002a}
\leavevmode\vrule height 2pt depth -1.6pt width 23pt, {\em {Finite elements in
  computational electromagnetism}}, Acta Numerica, 11 (2002), pp.~237--339.

\bibitem{Hu2015}
{\sc J.~Hu}, {\em {Finite element approximations of symmetric tensors on
  simplicial grids in $\mathbb{R}^{n}$: the higher order case}}, Journal of
  Computational Mathematics, 33 (2015), pp.~283--296.

\bibitem{hu2016capacity}
{\sc J.~Hu, X.~Yang, and S.~Zhang}, {\em Capacity of the {Adini} element for
  biharmonic equations}, Journal of Scientific Computing, 69 (2016),
  pp.~1366--1383.

\bibitem{hu2015family}
{\sc J.~Hu and S.~Zhang}, {\em A family of symmetric mixed finite elements for
  linear elasticity on tetrahedral grids}, Science China Mathematics, 58
  (2015), pp.~297--307.

\bibitem{john2017divergence}
{\sc V.~John, A.~Linke, C.~Merdon, M.~Neilan, and L.~G. Rebholz}, {\em On the
  divergence constraint in mixed finite element methods for incompressible
  flows}, SIAM Review, 59 (2017), pp.~492--544.

\bibitem{linke2018quasi}
{\sc A.~Linke, C.~Merdon, M.~Neilan, and F.~Neumann}, {\em Quasi-optimality of
  a pressure-robust nonconforming finite element method for the
  {Stokes-Problem}}, Mathematics of Computation,  (2018).

\bibitem{mardal2002robust}
{\sc K.~A. Mardal, X.-C. Tai, and R.~Winther}, {\em {A robust finite element
  method for Darcy--Stokes flow}}, SIAM Journal on Numerical Analysis, 40
  (2002), pp.~1605--1631.

\bibitem{Nedelec.J.1980a}
{\sc J.-C. N{\'e}d{\'e}lec}, {\em {Mixed finite elements in $\mathbb{R}^{3}$}},
  Numerische Mathematik, 35 (1980), pp.~315--341.

\bibitem{Nedelec.J.1986a}
\leavevmode\vrule height 2pt depth -1.6pt width 23pt, {\em {A new family of
  mixed finite elements in $\mathbb{R}^{3}$}}, Numerische Mathematik, 50
  (1986), pp.~57--81.

\bibitem{neilan2015discrete}
{\sc M.~Neilan}, {\em {Discrete and conforming smooth de Rham complexes in
  three dimensions}}, Mathematics of Computation,  (2015).

\bibitem{neilan2016stokes}
{\sc M.~Neilan and D.~Sap}, {\em Stokes elements on cubic meshes yielding
  divergence-free approximations}, Calcolo, 53 (2016), pp.~263--283.

\bibitem{nilssen2001robust}
{\sc T.~Nilssen, X.-C. Tai, and R.~Winther}, {\em {A robust nonconforming
  $H^{2}$-element}}, Mathematics of Computation, 70 (2001), pp.~489--505.

\bibitem{Raviart.P;Thomas.J.1977a}
{\sc P.-A. Raviart and J.-M. Thomas}, {\em A mixed finite element method for
  2-nd order elliptic problems}, in Mathematical aspects of finite element
  methods, Springer, 1977, pp.~292--315.

\bibitem{Stenberg2010}
{\sc R.~Stenberg}, {\em {A nonstandard mixed finite element family}},
  Numerische Mathematik, 115 (2010), pp.~131--139.

\bibitem{tai2006discrete}
{\sc X.-C. Tai and R.~Winther}, {\em {A discrete de Rham complex with enhanced
  smoothness}}, Calcolo, 43 (2006), pp.~287--306.

\bibitem{ming1994generalized}
{\sc M.~Wang}, {\em The generalized {Korn} inequality on nonconforming finite
  element spaces}, Chinese J. Numer. Math. Appl, 16 (1994), pp.~91--96.

\bibitem{Wang.M2001pt}
\leavevmode\vrule height 2pt depth -1.6pt width 23pt, {\em On the necessity and
  sufficiency of the patch test for convergence of nonconforming finite
  elements}, SIAM journal on numerical analysis, 39 (2001), pp.~363--384.

\bibitem{Wang.M;Shi.Z;Xu.J2007NM}
{\sc M.~Wang, Z.-C. Shi, and J.~Xu}, {\em {A new class of Zienkiewicz-type
  non-conforming element in any dimensions}}, Numerische Mathematik, 106
  (2007), pp.~335--347.

\bibitem{zhang2005new}
{\sc S.~Zhang}, {\em {A new family of stable mixed finite elements for the 3D
  Stokes equations}}, Mathematics of computation, 74 (2005), pp.~543--554.

\bibitem{Zhang.S2016}
{\sc S.~Zhang}, {\em {Stable finite element pair for Stokes problem and
  discrete Stokes complex on quadrilateral grids}}, Numerische Mathematik, 133
  (2016), pp.~371--408.

\end{thebibliography}

\end{document}